\documentclass[12pt]{amsart}
\usepackage{geometry}
\geometry{letterpaper}                   

\usepackage{url}
\usepackage{graphicx}
\usepackage[usenames]{color}
\usepackage{amsmath}
\usepackage{amssymb}
\usepackage{amsfonts}
\usepackage[english]{babel}
\usepackage{enumerate}
\usepackage{mathtools}
\usepackage{mathrsfs}
\usepackage[T1]{fontenc}
\usepackage{amsthm}
\usepackage{epstopdf}
\usepackage{hyperref}
\usepackage{tikz-cd}

\newtheorem{thm}{Theorem}[section]
\newtheorem{definition}[thm]{Definition}
\newtheorem{prop}[thm]{Proposition}
\newtheorem{cor}[thm]{Corollary}
\newtheorem{rk}[thm]{Remark}

\newtheorem{lemma}[thm]{Lemma}
\newtheorem{notation}[thm]{Notation}

\DeclareMathOperator{\image}{Im}

\DeclareMathOperator{\interior}{int}

\DeclareMathOperator{\sect}{Sect}
\DeclareMathOperator{\codim}{codim}

\DeclareMathOperator{\Hess}{Hess}
\DeclareMathOperator{\Diff}{Diff}


\begin{document}

\title{Bumpy metrics theorem for geodesic nets}
\author{Bruno Staffa}
\begin{abstract}
Stationary geodesic networks are the analogs of closed geodesics whose domain is a graph instead of a circle. We prove that for a Baire-generic Riemannian metric on a smooth manifold $M$, all connected embedded stationary geodesic nets are non-degenerate.
\end{abstract}

\maketitle

\section*{Introduction}
Let $M$ be an $n$-dimensional smooth manifold and let $\Gamma$ be a weighted multigraph. Let $\mathscr{E}$ be the set of edges of $\Gamma$, $\mathscr{V}$ the set of vertices and for each $E\in\mathscr{E}$ let $n(E)\in\mathbb{N}$ be its multiplicity. Consider the spaces
\begin{align*}
    \mathcal{M}^{k} & =\{g:g\text{ is a }C^{k}\text{ Riemannian metric on }M\},\\
    \Omega(\Gamma,M) & =\{f:\Gamma\to M:f\text{ is continuous and }f|_{E} \text{ is a } C^{2}\text{ immersion }\forall E\in\mathscr{E}\}.
\end{align*}
We say that $f_{0}\in\Omega(\Gamma,M)$ is a stationary geodesic network with respect to a metric $g\in\mathcal{M}^{k}$ if it is a critical point of the length functional $l_{g}:\Omega(\Gamma,M)\to\mathbb{R}$ defined as
\begin{equation*}
    l_{g}(f)=\int_{\Gamma}\sqrt{g_{f(t)}(\dot{f}(t),\dot{f}(t))}dt=\sum_{E\in\mathscr{E}}n(E)\int_{E}\sqrt{g_{f(t)}(\dot{f}(t),\dot{f}(t))}dt.
\end{equation*}
In other words, $f_{0}$ is stationary with respect to $g$ if for every one parameter family $f:(-\varepsilon,\varepsilon)\to\Omega(\Gamma,M)$ with $f(0,\cdot)=f_{0}$ we have
\begin{equation*}
    \frac{d}{ds}\bigg|_{s=0}l_{g}(f_{s})=0.
\end{equation*}
Given a stationary geodesic network $f_{0}$ with respect to $g\in\mathcal{M}^{k}$, we can consider the Hessian of $l_{g}$ at $f_{0}$:
\begin{equation*}
    \Hess_{f_{0}} l_{g}(X,Y)=\frac{\partial^{2}}{\partial x\partial s}\bigg|_{(0,0)}l_{g}(f(x,s))
\end{equation*}
where $X,Y$ are $C^{2}$ vector fields along $f_{0}$ and $f:(-\varepsilon,\varepsilon)^{2}\to\Omega(\Gamma,M)$ is a two parameter family with $f_{00}=f_{0}$ verifying $\frac{\partial f}{\partial s}(0,0,t)=X(t)$ and $\frac{\partial f}{\partial x}(0,0,t)=Y(t)$. A vector field $J$ along $f_{0}$ is said to be Jacobi if $\Hess_{f_{0}}l_{g}(X,J)=0$ for every vector field $X$ along $f_{0}$. It is easy to check that every parallel vector field along $f_{0}$ (i.e. any continuous vector field $J$ which is parallel and $C^{2}$ when restricted to each edge $E$ of $\Gamma$) is Jacobi. Therefore we say that $f_{0}$ is a nondegenerate stationary geodesic network with respect to $g$ if every Jacobi field along $f_{0}$ is parallel (notice that this is analogous to the notion of nondegeneracy for minimal submanifolds). We say that a metric $g\in\mathcal{M}^{k}$ is bumpy if every embedded stationary geodesic network with respect to $g$  whose domain is a good weighted multigraph $\Gamma$ is nondegenerate (see Section \ref{setup} for the definition of good weighted multigraph).

Our goal is to prove that for each $k\in\mathbb{
N}_{\geq 3}\cup\{\infty\}$ the set of bumpy metrics is ``big'', in the sense it is Baire-generic in $\mathcal{M}^{k}$. To achieve that, we will study the space of stationary geodesic networks for varying Riemannian metrics on $M$. We will follow the ideas of \cite{White}, where this problem is studied for embedded minimal submanifolds, and adapt the arguments developed there to our setting. The main difference with the minimal submanifold problem is that our objects (stationary geodesic networks) are not everywhere smooth. Therefore, when we want to model a neighborhood of some $f_{0}\in\Omega(\Gamma,M)$ we have to consider two degrees of freedom that determine a nearby $f\in\Omega(\Gamma,M)$: one is related to the image of the vertices and the other with the map along the edges. In order to have an injective parametrization of these geometric objects, we will mod out by reparametrizations and work with the quotient space $\hat{\Omega}(\Gamma,M)=\Omega(\Gamma,M)/\sim$ where $f\sim g$ if and only if there exists a homeomorphism $\tau:\Gamma\to\Gamma$ such that $\tau$ fixes the vertices of the graph, $\tau(E)=E$ for all $E\in\mathscr{E}$ and $\tau|_{E}:E\to E$ is a $C^{2}$ diffeomorphism for all $E\in\mathscr{E}$. In Section \ref{paths} of this paper we study $\hat{\Omega}([0,1],M)$ (i.e. the space of immersed paths on $M$ under reparametrization) and show that any $[f]$ close to $[f_{0}]\in\hat{\Omega}([0,1],M)$ can be obtained by a composition of a horizontal displacement (moving the vertices along an extension of the smooth curve $f_{0}:[0,1]\to M$) and a normal one (moving in the direction of a normal vector field along $f_{0}$ with respect to a background metric $\gamma_{0}$). Therefore $\hat{\Omega}([0,1],M)$ is modeled by the Banach space $\mathbb{R}^{2}\times\sect(N_{f_{0}})$ where $\sect(N_{f_{0}})$ denotes the space of $C^{2}$ sections of the normal bundle $N_{f_{0}}$ along $f_{0}:[0,1]\to M$ with respect to the background metric $\gamma_{0}$. Here we see the difference with the closed submanifold case analysed in \cite{White}, where the space of normal vector fields along a minimal submanifold $f_{0}:N\to M$ models a neighborhood of $[f_{0}]$; while for paths we have an additional $\mathbb{R}^{2}$ factor because there is an extra degree of freedom for each vertex. Those extra degrees of freedom will also be present in the spaces $\hat{\Omega}(\Gamma,M)$ we are interested in, as it is shown in Section \ref{lengthsgn} where a $C^{0}$ (but not differentiable) Banach manifold structure is given to those spaces.

Once we have such structure for $\hat{\Omega}(\Gamma,M)$, it is possible to derive the first and second variation formulas for the length functional in local coordinates. We obtain expressions analogous to those derived in \cite{White} but with additional terms corresponding to the vertices. This allows us to understand the space
\begin{equation*}
    \mathcal{S}^{k}_{0}(\Gamma)=\{(g,f)\in\mathcal{M}^{k}\times\hat{\Omega}(\Gamma,M):f\text{ is stationary with respect to }g\}
\end{equation*}
locally as the set of zeros of a mean curvature map $H:\mathcal{M}^{k}\times C_{0}\to\mathcal{Y}$ where $C_{0}$ is a Banach manifold which is the image of $\hat{\Omega}(\Gamma,M)$ under a chart, $\mathcal{Y}$ is a suitable Banach space that is defined in Section \ref{lengthsgn} and $H$ is a $C^{k-2}$ map between Banach manifolds. We use \cite[Theorem~1.2]{White} to give a Banach manifold structure to an open subset $\mathcal{S}^{k}(\Gamma)\subseteq\mathcal{S}^{k}_{0}(\Gamma)$. In order to do that, we prove in Section 4 that $D_{2}H$ is Fredholm of index $0$. Additionally, to satisfy condition (C) of \cite[Theorem~1.2]{White}, we restrict our attention to good weighted multigraphs $\Gamma$ and to embedded $\Gamma$-nets as defined in Section \ref{setup}. We denote
\begin{equation*}
    \Omega^{emb}(\Gamma,M)=\{f\in\Omega(\Gamma,M):f\text{ is embedded}\}.
\end{equation*}
As we show in Section \ref{lengthsgn}, this technical condition rules out the possibility of having parallel Jacobi fields along $[f]\in\hat{\Omega}(\Gamma,M)$ and allows us to give a Banach manifold structure to
\begin{equation*}
    \mathcal{S}^{k}(\Gamma)=\{(g,f)\in\mathcal{M}^{k}\times\hat{\Omega}^{emb}(\Gamma,M):f\text{ is stationary with respect to }g\}\subseteq\mathcal{S}^{k}_{0}(\Gamma).
\end{equation*}

\begin{rk}
It is proved in \cite[Lemma~2.5]{Liokumovich} that given a stationary geodesic net $f:\Gamma\to M$ (with respect to a metric $g$), there exist $\{f_{i}:\Gamma_{i}\to M\}$ where each $\Gamma_{i}$ is a good weighted multigraph and each $f_{i}:\Gamma_{i}\to M$ is an embedded stationary geodesic net such that their union has the same image and multiplicity at every point as $f$. Hence we do not loose much generality by restricting our attention to good multigraphs and embedded stationary geodesic nets.
\end{rk}


Having the previous considerations in mind and applying \cite[Theorem~1.2]{White} as mentioned before, we prove in Section \ref{banachmanstr}  that $\mathcal{S}^{k}(\Gamma)$
is a $C^{k-2}$ Banach manifold and that the projection $\Pi:\mathcal{S}^{k}(\Gamma)\to\mathcal{M}^{k}$, $(g,f)\mapsto g$ is Fredholm of index $0$. This can be summarized in the following structure theorem.

\begin{thm}[Structure theorem for geodesic nets]\label{structurethm}
Let $\Gamma$ be a good weighted multigraph and $k\in\mathbb{N}_{\geq 3}$. Then
\begin{enumerate}
    \item The space
    \begin{equation*}
        \mathcal{S}^{k}(\Gamma)=\{(g,f)\in\mathcal{M}^{k}\times\hat{\Omega}^{emb}(\Gamma,M):f\text{ is stationary with respect to }g\}
    \end{equation*}
    has a $C^{k-2}$ Banach manifold structure.
\item  The projection map $\Pi:\mathcal{S}^{k}(\Gamma)\to\mathcal{M}^{k}$ onto the first coordinate is Fredholm of index $0$.
\item Given $(g,f)\in\mathcal{S}^{k}(\Gamma)$, $f$ is nondegenerate with respect to $g$ if and only if $D\Pi_{(g,f)}:T_{(g,f)}\mathcal{S}^{k}(\Gamma)\to T_{g}\mathcal{M}^{k}$ is an isomorphism.
\end{enumerate}

\end{thm}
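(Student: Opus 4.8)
The plan is to reduce everything to an application of White's abstract transversality theorem [Theorem 1.2, White], whose hypotheses I will verify one by one. The local model near a point $(g,f_0)\in\mathcal{S}^k_0(\Gamma)$ is the zero set of the mean curvature map $H:\mathcal{M}^k\times C_0\to\mathcal{Y}$ described in the introduction, so the three claims become statements about $H$ and its derivatives.

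Step 1: Local structure. I would first recall from Section~\ref{lengthsgn} that $\mathcal{S}^k_0(\Gamma)$ is locally $H^{-1}(0)$, that $H$ is $C^{k-2}$, and that the partial derivative $D_2 H$ (differentiation in the $C_0$ direction, i.e.\ along deformations of the net) is, at a stationary $f_0$, the Jacobi operator appearing in the second variation formula. By Section~4 of the paper, $D_2 H$ is Fredholm of index~$0$. Step 2: Verifying condition~(C). This is where restricting to \emph{good} weighted multigraphs and to \emph{embedded} nets enters: as explained in the excerpt, embeddedness rules out nontrivial parallel Jacobi fields that are not already accounted for, so the kernel of $D_2 H$ at a point of $\mathcal{S}^k(\Gamma)$ consists exactly of the genuine Jacobi fields, and one checks that $H$ together with its full derivative $DH = (D_1 H, D_2 H)$ is \emph{surjective} at every zero lying over $\hat{\Omega}^{emb}(\Gamma,M)$ — the $D_1 H$ part (variation of the metric) supplies the cokernel of $D_2 H$ because one can always find a metric perturbation realizing any prescribed infinitesimal change of mean curvature, using a bump-function argument supported near an interior point of an edge where the net is embedded. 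Granting surjectivity of $DH$ and Fredholmness of $D_2 H$, White's theorem yields that $\mathcal{S}^k(\Gamma) = H^{-1}(0)$ (localized) is a $C^{k-2}$ Banach submanifold, proving~(1), and that the projection $\Pi$ onto the $\mathcal{M}^k$ factor is Fredholm of the same index as $D_2 H$, namely~$0$, proving~(2).

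Step 3: The nondegeneracy criterion (3). The tangent space $T_{(g,f_0)}\mathcal{S}^k(\Gamma)$ is $\ker DH = \{(\dot g,\dot f): D_1 H(\dot g) + D_2 H(\dot f) = 0\}$, and $D\Pi_{(g,f_0)}(\dot g,\dot f)=\dot g$. This map fails to be injective exactly when there is a nonzero $\dot f$ with $(0,\dot f)\in\ker DH$, i.e.\ $D_2 H(\dot f)=0$; since $D_2 H$ is the Jacobi operator, such $\dot f$ is precisely a Jacobi field that is nonzero in the Banach manifold $\hat{\Omega}^{emb}(\Gamma,M)$ — and by the discussion preceding the theorem, a Jacobi field represents a nonzero tangent vector in $\hat\Omega(\Gamma,M)$ iff it is \emph{not} parallel. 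Hence $D\Pi$ is injective iff every Jacobi field along $f_0$ is parallel, i.e.\ iff $f_0$ is nondegenerate. Because $D\Pi$ is Fredholm of index~$0$ by~(2), injectivity is equivalent to being an isomorphism, which gives~(3).

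The main obstacle will be Step~2, specifically establishing surjectivity of $DH$ at embedded nets and correctly matching condition~(C) of White's theorem to the geometric hypothesis of embeddedness over a good multigraph; one must be careful that the metric perturbations used to hit $\cok(D_2 H)$ can be localized away from the vertices and from self-intersections, which is exactly what embeddedness plus the combinatorial conditions on $\Gamma$ guarantee, and one must also confirm that the extra $\mathbb{R}^2$ vertex degrees of freedom built into the chart $C_0$ do not spoil the Fredholm index bookkeeping. The remaining steps are then formal consequences of White's machinery.
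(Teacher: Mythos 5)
Your outline follows the same route as the paper: localize $\mathcal{S}^k(\Gamma)$ as $H^{-1}(0)$ in a chart, quote the index-zero Fredholm property of $D_2H$ from Section \ref{fredholm}, verify White's hypothesis (C) by pairing a nonzero kernel element $\kappa$ (equivalently, a non-parallel Jacobi field $J=D\tilde\Lambda_{u_0}(\kappa)$, via Proposition \ref{Prop Jacobi}) against a conformal perturbation $g_x=(1+xh)g_0$ with $h$ vanishing along $f_0$ and $\langle\nabla h,J\rangle\geq 0$, localized near an interior point of an edge where $J$ is not tangent --- which is exactly where embeddedness and the good* condition enter --- and then read off (2) and (3) from $\ker(D\Pi_{(g,u)})=\{0\}\times\ker D_2H_{(g,u)}$ together with the index-zero property. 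That part is correct in substance, with the small caveat that in White's theorem one does not verify surjectivity of $DH$ directly; one verifies (C) for each nonzero $\kappa\in\ker D_2H$, and surjectivity (the submersion property) is the theorem's conclusion, made possible by the symmetry of the second variation and the $L^2$ pairing $X\subseteq Y\subseteq\mathcal{H}$.

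There is, however, a genuine gap in your claim (1). White's theorem is applied chart by chart, but the ambient space $\mathcal{M}^k\times\hat\Omega^{emb}(\Gamma,M)$ is only a $C^0$ Banach manifold: the transition maps of the atlas built in Sections \ref{paths}--\ref{lengthsgn} involve composition and inversion of $C^2$ maps and are merely continuous. So the local statement ``$H^{-1}(0)$ is a $C^{k-2}$ submanifold of $\mathcal{M}^k\times C_0$'' does not by itself produce a global $C^{k-2}$ Banach manifold structure on $\mathcal{S}^k(\Gamma)$: one must show that the induced transition maps $\Lambda_2^{-1}\circ\Lambda_1$ between two such local models are $C^{k-2}$. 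The paper does this with a separate argument of White: one builds $\Psi(g,f)=(g,\int_f\omega_1,\dots,\int_f\omega_Q)$ for finitely many smooth $1$-forms $\omega_j$, chosen (using Cartan's formula, the compactness of the unit sphere of the finite-dimensional kernel $K_i=\ker D\Pi$, and again the non-parallel Jacobi field plus embeddedness to produce $\omega=h\eta$ with $\int_{f_0}\iota_Jd\omega>0$) so that $\Psi\circ\Lambda_i$ is a $C^{k-2}$ immersion near $(g_0,u_i)$; then $\Lambda_2^{-1}\circ\Lambda_1=(\Psi\circ\Lambda_2)^{-1}\circ(\Psi\circ\Lambda_1)$ is $C^{k-2}$. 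Without this (or an equivalent) step your proof only yields a $C^0$ manifold with distinguished charts in which $\mathcal{S}^k(\Gamma)$ is locally $C^{k-2}$. A second, smaller omission: ``good'' includes simple closed loops with multiplicity, which are excluded from the good* local model (every vertex needs three distinct incoming edges for Proposition \ref{Prop Jacobi}(1)); that case must be handled separately, in the paper by invoking White's original structure theorem for closed geodesics.
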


The previous theorem together with Smale's version of Sard's theorem for Banach spaces from \cite{Smale} implies

\begin{thm}[Bumpy metrics theorem for stationary geodesic nets]\label{bumpythm}
Given $k\in\mathbb{N}_{\geq 3}\cup\{\infty\}$ the subset $\mathcal{N}^{k}\subseteq\mathcal{M}^{k}$ of bumpy metrics is generic in the Baire sense.
\end{thm}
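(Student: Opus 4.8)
The plan is to deduce Theorem \ref{bumpythm} from the Structure Theorem \ref{structurethm} via the Sard--Smale theorem, exactly as in the minimal submanifold case of \cite{White}. First I would fix a good weighted multigraph $\Gamma$ and work with $\mathcal{S}^{k}(\Gamma)$, $\Pi:\mathcal{S}^{k}(\Gamma)\to\mathcal{M}^{k}$. By Theorem \ref{structurethm}, $\Pi$ is a $C^{k-2}$ Fredholm map of index $0$ between $C^{k-2}$ Banach manifolds; since $k\geq 3$ we have $k-2\geq 1$, which is the minimal regularity needed to invoke the Sard--Smale theorem \cite{Smale}. Provided $\mathcal{S}^{k}(\Gamma)$ is second countable (it has a countable dense subset because $\mathcal{M}^{k}$ and $\hat\Omega^{emb}(\Gamma,M)$ are separable metrizable spaces, so the claim follows), Sard--Smale gives that the set $R(\Gamma)$ of regular values of $\Pi$ is residual (comeager) in $\mathcal{M}^{k}$. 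By part (3) of Theorem \ref{structurethm}, $g$ is a regular value of $\Pi$ precisely when every $f$ with $(g,f)\in\mathcal{S}^{k}(\Gamma)$ is nondegenerate with respect to $g$; that is, $g\in R(\Gamma)$ iff every embedded stationary geodesic net with respect to $g$ whose domain is $\Gamma$ is nondegenerate.

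Next I would take the intersection over all good weighted multigraphs. Up to graph isomorphism there are only countably many finite weighted multigraphs, hence only countably many good ones; enumerate them as $\{\Gamma_{i}\}_{i\in\mathbb{N}}$ and set $\mathcal{N}^{k}=\bigcap_{i}R(\Gamma_{i})$. A countable intersection of residual sets in the Baire space $\mathcal{M}^{k}$ is residual, so $\mathcal{N}^{k}$ is generic in the Baire sense. By the definition of bumpy given in the Introduction — every embedded stationary geodesic net with respect to $g$ whose domain is a good weighted multigraph is nondegenerate — an element of $\mathcal{N}^{k}$ is exactly a bumpy metric, so $\mathcal{N}^{k}$ is the set of bumpy metrics and we are done in the case $k<\infty$.

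For $k=\infty$ a separate argument is needed, because $\mathcal{M}^{\infty}$ is not a Banach manifold and the Sard--Smale theorem does not apply directly; here I would follow the standard Taubes-type trick used in \cite{White}. Fix a good weighted multigraph $\Gamma$ and, for each pair $(j,\varepsilon)$ with $j\in\mathbb{N}$ and $\varepsilon>0$ rational, let $\mathcal{U}_{j,\varepsilon}(\Gamma)\subseteq\mathcal{M}^{\infty}$ be the set of smooth metrics $g$ such that every embedded stationary geodesic net $f$ with respect to $g$ with domain $\Gamma$, length at most $j$, and lying in a fixed exhausting compact region, is nondegenerate with a uniform lower bound $\varepsilon$ on the relevant eigenvalue (equivalently, $D\Pi_{(g,f)}$ is invertible with norm of the inverse at most $1/\varepsilon$). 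One shows each $\mathcal{U}_{j,\varepsilon}(\Gamma)$ is open in $\mathcal{M}^{\infty}$ (by compactness of the set of such nets and continuous dependence of the Hessian on the metric) and dense (approximate a smooth metric by a $C^{k}$-generic one using the finite-regularity result just proved, then smooth it back, noting that the finitely many constraints are stable under small $C^{k}$ perturbation); the intersection $\bigcap_{\Gamma}\bigcap_{j,\varepsilon}\mathcal{U}_{j,\varepsilon}(\Gamma)$ is then a countable intersection of open dense sets, hence residual, and equals $\mathcal{N}^{\infty}$.

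The routine part is the countability and Baire-category bookkeeping; the one genuine obstacle is the $k=\infty$ case, where the openness of $\mathcal{U}_{j,\varepsilon}(\Gamma)$ requires a compactness statement for embedded stationary geodesic nets of bounded length (so that ``nondegenerate'' can be upgraded to ``uniformly nondegenerate'' on compact families) and the density requires checking that truncating the regularity from $\infty$ to $k$ and then re-smoothing does not destroy the finitely many nondegeneracy conditions — both of which are handled by the now-classical arguments of \cite[\S 2, \S 3]{White} transported to the present setting, using that all constructions depend continuously on the metric in $C^{k}$.
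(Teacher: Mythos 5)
For $k<\infty$ your argument is exactly the paper's: Sard--Smale applied to the index-zero Fredholm projection $\Pi:\mathcal{S}^{k}(\Gamma)\to\mathcal{M}^{k}$ from Theorem \ref{structurethm}, identification of regular values with metrics bumpy with respect to $\Gamma$ via parts (2) and (3), and a countable intersection over the countably many good weighted multigraphs (second countability of $\mathcal{S}^{k}(\Gamma)$ is noted in the paper as well). That part is fine.

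The $k=\infty$ case is where you diverge, and as written it has a genuine gap. Your Taubes-type scheme hinges on the openness of the sets $\mathcal{U}_{j,\varepsilon}(\Gamma)$, which in turn requires a compactness theorem for embedded stationary geodesic nets of bounded length with fixed domain $\Gamma$, together with the statement that degeneracy survives passage to the limit. Neither is available here: a bound on $l_{g}(f)$ does not prevent edges from collapsing or embeddedness from being lost in the limit (the paper's own remark about stationary figure eights converging to a doubled closed geodesic is precisely this phenomenon), so the limit of nets in your class need not lie in the class, and the ``uniform eigenvalue'' formulation is further complicated by the fact that nondegeneracy here means ``every Jacobi field is parallel'', so one must also control the space of parallel fields along a degenerating sequence. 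Your density step (``perturb to a $C^{k}$-generic metric, then re-smooth; the finitely many constraints are stable'') implicitly uses the same unproven openness. The paper avoids all of this with a purely formal Baire-category lemma: write $\mathcal{N}^{3}=\bigcap_{l}\mathcal{N}^{3,l}$ with each $\mathcal{N}^{3,l}$ open and dense in $\mathcal{M}^{3}$, set $\mathcal{N}^{k,l}=\mathcal{N}^{3,l}\cap\mathcal{M}^{k}$ and $\mathcal{N}^{\infty,l}=\mathcal{N}^{3,l}\cap\mathcal{M}^{\infty}$, and use only (i) the compatibility $\mathcal{N}^{k'}=\mathcal{N}^{k}\cap\mathcal{M}^{k'}$ for $k'\geq k$ (obvious for bumpiness), (ii) density of $\mathcal{M}^{\infty}$ in $\mathcal{M}^{k}$, and (iii) the fact that the $C^{k}$ and $C^{\infty}$ topologies are finer than the $C^{3}$ topology, to conclude that each $\mathcal{N}^{\infty,l}$ is open and dense in $\mathcal{M}^{\infty}$ and that their intersection is $\mathcal{N}^{\infty}$. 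If you want to keep your write-up, the cleanest fix is to replace your $k=\infty$ argument by this soft topological one; otherwise you would need to formulate and prove the compactness and limit-nondegeneracy statements for geodesic nets, which is a substantial (and here unnecessary) undertaking.
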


To be precise, Theorem \ref{structurethm} and Theorem \ref{bumpythm} for $k\in\mathbb{N}_{\geq 3}$ are proved in Section 5 using the fact that $C^{k}$ spaces have a Banach manifold structure. Although the same reasoning does not hold immediately for $C^{\infty}$ spaces because they only have Frechet structures, in Section \ref{Cinfty} we extend Theorem \ref{bumpythm} to $C^{\infty}$ metrics.

\begin{rk}
Observe that our result does not provide nondegeneracy for not embedded stationary geodesic networks $f:\Gamma\to M$. In particular, we do not rule out the possibility of having a sequence of non-smooth stationary geodesic nets $f_{n}:\Gamma\to M$ converging to a stationary geodesic net $f_{0}:\Gamma\to M$ which represents a closed geodesic loop with certain multiplicity (for example, a sequence of stationary figure eights which converges to a simple closed geodesic with multiplicity $2$).
\end{rk}

Theorem \ref{structurethm} allowed to prove that for a generic metric in a closed manifold $M$, the union of all stationary geodesic nets forms a dense subset of $M$ (see the work \cite{Liokumovich}). More recently, Theorem \ref{structurethm} was used in \cite{LiSta} to prove that for a generic Riemannian metric $g$ in a closed $2$-manifold (respectively $3$-manifold), there exists a sequence of closed geodesics (respectively of embedded stationary geodesic networks) which is equidistributed in $(M,g)$.

\begin{rk}
We recently learnt that Otis Chodosh and Christos Mantoulidis have independently proved a different Bumpy Metrics Theorem for stationary geodesic networks in 2-manifolds as part of their work \cite{Chodosh}, where they proved several remarkable results including the computation of the Weyl law constant for surfaces and the fact that min-max stationary geodesic networks on surfaces are unions of immersed closed geodesics.
\end{rk}

\vspace{0.2in}

\textbf{Acknowledgements.}
I am grateful to Yevgeny Liokumovich for suggesting this problem and for his valuable guidance. I also want to thank Otis Chodosh and Christos Mantoulidis for their valuable comments and suggestions. The author was partially supported by NSERC Discovery grant.

\section{Set up}\label{setup}
\begin{definition}
A weighted multigraph is a graph $\Gamma=(\mathscr{E},\mathscr{V},\{\pi_{E}\}_{E\in\mathscr{E}},\{n(E)\}_{E\in\mathscr{E}})$ consisting of:
\begin{enumerate}
    \item A set of edges $\mathscr{E}$. For each $E\in\mathscr{E}$, we fix an homeomorphism $E\cong[0,1]$.
    \item A set of vertices $\mathscr{V}$.
    \item For each $E\in\mathscr{E}$, a map $\pi_{E}:\{0,1\}\to\mathscr{V}$ which sends each of the boundary points of the edge $E$ (identified with $0$ and $1$) to their corresponding vertex $v$.
    \item A multiplicity $n(E)\in\mathbb{N}$ assigned to each edge $E\in\mathscr{E}$.
\end{enumerate}
We will also denote by $\Gamma$ the one-dimensional simplicial complex $\mathscr{E}\times[0,1]/\sim$ where $(E,s)\sim (E',s')$ if and only if $s,s'\in\{0,1\}$ and $\pi_{E}(s)=\pi_{E'}(s')$.

\end{definition}

\begin{definition}\label{incoming edge}
    Let $\Gamma$ be a weighted multigraph. Given a vertex $v\in\mathscr{V}$, an incoming edge at $v$ is a pair $(E,i)\in\mathscr{E}\times\{0,1\}$ such that $\pi_{E}(i)=v$. We will assume that every vertex of the weighted multigraphs $\Gamma$ we work with has at least two different incoming edges.
\end{definition}

\begin{rk}
    Notice that if we consider the simplicial complex associated to $\Gamma$, each loop edge at $v$ appears two times as an incoming edge at $v$ (as $(E,0)$ and as $(E,1)$) and all the other edges appear exactly once (either as $(E,0)$ or as $(E,1)$).
\end{rk}

\begin{definition}
A weighted multigraph $\Gamma$ is good* if the underlying one-dimensional simplicial complex is connected and each vertex $v \in \mathscr{V}$ has at least three different incoming edges. A weighted multigraph is good if either it is good* or it is a simple loop with multiplicity.
\end{definition}

\begin{definition}
A $\Gamma$-net $f$ on $M$ is a continuous map $f:\Gamma\to M$ which is a $C^{2}$ immersion when restricted to the edges of $\Gamma$. The previous means that for each $E\in\mathscr{E}$ the map
\begin{center}
\begin{tikzcd}
f_{E}:[0,1]\rar{\iota_{E}} &\mathscr{E}\times[0,1] \rar{q} & \Gamma \rar{f} & M
\end{tikzcd}
\end{center}
is a $C^{2}$ immersion (here $\iota_{E}(t)=(E,t)$ and $q$ is the quotient map $q:\mathscr{E}\times[0,1]\to \Gamma=\mathscr{E}\times[0,1]/\sim$). We think of $f_{E}$ as the restriction of $f$ to the edge $E$ and sometimes regard its domain as $E$ under the identification $E\cong[0,1]$. We denote $\Omega(\Gamma,M)$ the space of $\Gamma$-nets on $M$.
\end{definition}

\begin{definition}
Given $f\in\Omega(\Gamma,M)$ and $k\geq 0$, we denote $\mathfrak{X}^{k}(f)$ the space of continuous vector fields along $f$ which are of class $C^{k}$ along each edge of $\Gamma$ (observe that $\mathfrak{X}^{k}(f_{0})$ is always well defined for $k\leq 2$ and could be defined for bigger values of $k$ provided the restrictions of $f$ to the edges have enough regularity).
\end{definition}


\begin{notation}
Given $f\in\Omega(\Gamma,M)$, $X\in\mathfrak{X}^{2}(f)$, $E\in\mathscr{E}$ and $t\in E$ we will denote $\dot{X}_{E}(t)$ the covariant derivative of the vector field $X$ along $f_{E}$ at $t$ (with respect to a certain Riemannian metric to be specified). Notice that when $t$ is a vertex of $\Gamma$ this definition depends on $E$. We will omit the subscript $E$ when it is implicit which edge are we differentiating along.
\end{notation}

\begin{definition}
We say that a $\Gamma$-net $f$ is embedded if the map $f:\Gamma\to M$ is injective (notice that by the compactness of $\Gamma$ this is equivalent to say that the map $f:\Gamma\to M$ is a homeomorphism onto its image). We denote \begin{equation*}
    \Omega^{emb}(\Gamma,M)=\{f\in\Omega(\Gamma,M):f\text{ is embedded}\}.
\end{equation*}
\end{definition}
 
The spaces $\Omega(\Gamma,M)$ and $\Omega^{emb}(\Gamma,M)$ have natural Banach manifold structures with the $C^{2}$ topology (both are open subspaces of the space $C^{2}(\Gamma,M)$ of continuous maps $f:\Gamma\to M$ which are of class $C^{2}$ along each edge). Let $\mathcal{M}^{k}$ be the space of $C^{k}$ Riemannian metrics on $M$. In the following we will omit the superscript $k$ for simplicity, assuming it is fixed. Given $g\in\mathcal{M}$ and $f\in\Omega(\Gamma,M)$, we define the $g$-length of $f$ by
\begin{equation*}
    l_{g}(f)=\int_{\Gamma}\sqrt{g_{f(t)}(\Dot{f}(t),\Dot{f}(t))}dt
\end{equation*}
where given a measurable function $h:\Gamma\to\mathbb{R}$ which is integrable along each edge $E\in\mathscr{E}$, we define
\begin{equation*}
    \int_{\Gamma}h(t)dt=\sum_{E\in\mathscr{E}}n(E)\int_{E}h(t)dt.
\end{equation*}

\begin{definition}
A $\Gamma$-net $f\in\Omega(\Gamma,M)$ is a stationary geodesic network with respect to the metric $g\in\mathcal{M}$ if it is a critical point of the length functional $l_{g}:\Omega(\Gamma,M)\to\mathbb{R}$.
\end{definition}

In order to give a more precise description of this condition, and to define what it means for a stationary geodesic network to be nondegenerate, we derive the first and second variation formulas for the length functional on $\Omega(\Gamma,M)$.

Let $f:(-\varepsilon,\varepsilon)\times\Gamma\to M$ be a one parameter family of $\Gamma$-nets through $f_{0}=f(0,\cdot)$ and let $X(t)=\frac{\partial f}{\partial s}(0,t)$ be the corresponding variational vector field along $f_{0}$. Then
\begin{equation}\label{eq1}
    \frac{d}{ds}\bigg|_{s=0}l_{g}(f_{s})=\int_{\Gamma}\frac{g_{f_{0}(t)}(\dot{X}(t),\dot{f}_{0}(t))}{\sqrt{g_{f_{0}(t)}(\dot{f}_{0}(t),\dot{f}_{0}(t))}}dt.
\end{equation}
To simplify the computation we will assume that each edge of $f_{0}$ is parametrized with constant speed (we don't loose generality by doing so because every $\Gamma$-net can be reparametrized with constant speed in a unique way), being $\sqrt{g_{f_{0}(t)}(\dot{f}_{0}(t),\dot{f}_{0}(t))}=l_{g}(f_{0}(E))$ for all $t\in E$. Denoting $l_{g}(f_{0}(E))=l(E)$ for simplicity, we get

\begin{equation*}
    \frac{d}{ds}\bigg|_{s=0}l_{g}(f_{s})=\sum_{E\in\mathscr{E}}\frac{n(E)}{l(E)}\int_{E}g_{f_{0}(t)}(\dot{X}(t),\dot{f}_{0}(t))dt.
\end{equation*}
Integrating by parts we obtain

\begin{equation*}
    \frac{d}{ds}\bigg|_{s=0}l_{g}(f_{s})=-\sum_{E\in\mathscr{E}}\frac{n(E)}{l(E)}\int_{E}g_{f_{0}(t)}(X(t),\ddot{f}_{0}(t))dt+\sum_{v\in\mathscr{V}}g_{f_{0}(v)}(X(v),V(f_{0})(v))
\end{equation*}
where
\begin{equation*}
    V(f_{0})(v):=\sum_{(E,i):\pi_{E}(i)=v}(-1)^{i+1}n(E)\frac{\dot{f}_{0,E}(i)}{|\dot{f}_{0,E}(i)|}
\end{equation*}
and $f_{0,E}=(f_{0})_{E}$.

From the previous computation, we see that a constant speed parametrized $\Gamma$-net $f_{0}$ is stationary with respect to $l_{g}$ if and only if:
\begin{enumerate}
    \item $\ddot{f}_{0}(t)=0$ along each edge $E\in\mathscr{E}$ (i.e. the edges of $\Gamma$ are mapped to geodesic segments).
    \item $V(f_{0})(v)=0$ for all $v\in\mathscr{V}$. This means that the sum with multiplicity of the inward unit tangent vectors to the edges concurring at each vertex $v$ must be $0$.
\end{enumerate}

Now assume $f_{0}$ is parametrized with constant speed and stationary. We want to define a continuous bilinear map $\Hess_{f_{0}} l_{g}:\mathfrak{X}^{2}(f_{0})\times\mathfrak{X}^{2}(f_{0})\to\mathbb{R}$ which will be the Hessian of $l_{g}$ at the critical point $f_{0}$ in the following way.  Consider a two parameter variation $f:(-\varepsilon,\varepsilon)^{2}\times\Gamma\to M$ with $f(0,0)=f_{0}$. Let $X(t)=\frac{\partial f}{\partial s}(0,0,t)$ and $Y(t)=\frac{\partial f}{\partial x}(0,0,t)$. We set $\Hess_{f_{0}}(X,Y)=\frac{\partial^{2}}{\partial x\partial s}\bigg|_{(0,0)}l_{g}(f(x,s))$. Next we will compute that expression and show that it is well defined (i.e. that it is independent of the two parameter family $f(x,s)$). From (\ref{eq1}),
\begin{align*}
    \Hess_{f_{0}}l_{g}(X,Y)= & \frac{d}{dx}\bigg|_{x=0}\sum_{E\in\mathscr{E}}n(E)\int_{E}g_{f_{x0}(t)}(\frac{D}{dt}\frac{\partial f}{\partial s}(x,0,t),\frac{\frac{\partial f}{\partial t}(x,0,t)}{|\frac{\partial f}{\partial t}(x,0,t)|})dt\\
    = & \sum_{E\in\mathscr{E}}n(E)\int_{E}g_{f_{0}(t)}(\frac{D}{dx}\frac{D}{dt}\frac{\partial f}{\partial s}(x,0,t)\bigg|_{(0,0,t)},\frac{\dot{f}_{0}(t)}{|\dot{f}_{0}(t)|})dt\\
    & +\sum_{E\in\mathscr{E}}n(E)\int_{E}g_{f_{0}(t)}(\dot{X}(t),\frac{D}{dx}\frac{\frac{\partial f}{\partial t}(x,0,t)}{|\frac{\partial f}{\partial t}(x,0,t)|}\bigg|_{(0,0,t)})dt.
\end{align*}
Computing each sum separately we get

\begin{multline}\label{SVF1}
    \Hess_{f_{0}}l_{g}(X,Y) =  \sum_{E\in\mathscr{E}}\frac{n(E)}{l(E)}\bigg[\int_{E}g(\dot{X}(t),\dot{Y}(t))-g(\dot{Y}(t),\frac{\dot{f}_{0}(t)}{|\dot{f}_{0}(t)|})g(\dot{X}(t),\frac{\dot{f}_{0}(t)}{|\dot{f}_{0}(t)|})\\
     -g(R(\dot{f}_{0}(t),Y(t))\dot{f}_{0}(t),X(t))dt\bigg]+n(E)g(\frac{D}{dx}\frac{\partial f}{\partial s}|_{(0,0,\pi_{E}(i))},\frac{\dot{f}_{0,E}(i)}{|\dot{f}_{0,E}(i)|})\bigg |_{0}^{1}.  
\end{multline}
Observe that
\begin{align*}
    & \sum_{E\in\mathscr{E}}n(E)g(\frac{D}{dx}\frac{\partial f}{\partial s}|_{(0,0,\pi_{E}(i))},\frac{\dot{f}_{0,E}(i)}{|\dot{f}_{0,E}(i)|})\bigg |_{0}^{1}& \notag\\  &=\sum_{v\in\mathscr{V}}\sum_{(E,i):\pi_{E}(i)=v}(-1)^{i+1}n(E)g(\frac{D}{dx}\frac{\partial f}{\partial s}|_{(0,0,v)},\frac{\dot{f}_{0,E}(i)}{|\dot{f}_{0,E}(i)|})\\
    &=\sum_{v\in\mathscr{V}}g(\frac{D}{dx}\frac{\partial f}{\partial s}\bigg |_{(0,0,v)},\sum_{(E,i):\pi_{E}(i)=v}(-1)^{i+1}n(E)\frac{\dot{f}_{0,E}(i)}{|\dot{f}_{0,E}(i)|})\\
    &=0
\end{align*}
because $V(f_{0})(v)=0$  for all $v\in\mathscr{V}$. Using this and integrating by parts the first two terms of (\ref{SVF1}) we get

\begin{multline*}
    \Hess_{f_{0}}l_{g}(X,Y) =\\  
     \sum_{E\in\mathscr{E}}\frac{n(E)}{l(E)}\bigg[\int_{E}g(-\ddot{Y}(t)-R(\dot{f}_{0}(t),Y(t))\dot{f}_{0}(t)+g(\ddot{Y}(t),\frac{\dot{f}_{0}(t)}{|\dot{f}_{0}(t)|})\frac{\dot{f}_{0}(t)}{|\dot{f}_{0}(t)|},X(t))dt\\
    +g(\dot{Y}_{E}(i)-g(\dot{Y}_{E}(i),\frac{\dot{f}_{0,E}(i)}{|\dot{f}_{0,E}(i)|})\frac{\dot{f}_{0,E}(i)}{|\dot{f}_{0,E}(i)|},X(\pi_{E}(i))) \bigg|_{0}^{1}
    \bigg].
\end{multline*}
Therefore we can define a second order differential operator $A_{E}$ along the edge $E$ as

\begin{align*}
    A_{E}(Y) & =\frac{n(E)}{l(E)}\bigg[-\ddot{Y}(t)-R(\dot{f}_{0}(t),Y(t))\dot{f}_{0}(t)+g(\ddot{Y}(t),\frac{\dot{f}_{0}(t)}{|\dot{f}_{0}(t)|})\frac{\dot{f_{0}}(t)}{|\dot{f}_{0}(t)|}\bigg]\\
    & =-\frac{n(E)}{l(E)}\bigg[\ddot{Y}^{\perp}+R(\dot{f}_{0}(t),Y(t)^{\perp}),\dot{f}_{0}(t)\bigg]
\end{align*}
and an operator $B_{v}:\mathfrak{X}^{2}(f_{0})\to T_{f(v)}M$ at each vertex $v\in\mathscr{V}$ as

\begin{align*}
    B_{v}(Y) & =\sum_{(E,i):\pi_{E}(i)=v}(-1)^{i+1}\frac{n(E)}{l(E)}\bigg(\dot{Y}_{E}(i)-g(\dot{Y}_{E}(i),\frac{\dot{f}_{0,E}(i)}{|\dot{f}_{0,E}(i)|})\frac{\dot{f}_{0,E}(i)}{|\dot{f}_{0}(i)|}\bigg)\\
    & =\sum_{(E,i):\pi_{E}(i)=v} (-1)^{i+1}\frac{n(E)}{l(E)}\dot{Y}_{E}(i)^{\perp}
\end{align*}
where given $V\in T_{f_{0}(t)}M$ we denote $V^{\perp}$ the projection of $V$ onto the orthogonal complement of the subspace $\langle\dot{f}_{0}(t)\rangle$. Thus we have the second variation formula

\begin{equation*}
    \Hess_{f_{0}}l_{g}(X,Y)=\sum_{E\in\mathscr{E}}\int_{E}g(A_{E}(Y)(t),X(t))dt+\sum_{v\in\mathscr{V}}g(B_{v}(Y),X(v)).
\end{equation*}

We say that a vector field $J$ along $f_{0}$ is Jacobi if $\Hess_{f_{0}} l_{g}(J,X)=0$ for all vector fields $X$ along $f_{0}$. By the second variation formula, $J$ is Jacobi along $f_{0}$ if and only if
\begin{enumerate}
    \item $J$ verifies the Jacobi equation $\ddot{J}^{\perp}+R(\dot{f}_{0}(t),J(t)^{\perp})\dot{f}_{0}(t)=0$ along each $E\in\mathscr{E}$.
    \item $B_{v}(J)=0$ for all $v\in\mathscr{V}$.
\end{enumerate}

\begin{definition}
We say that a vector field $X\in\mathfrak{X}^{2}(f_{0})$ is parallel if its restriction to each edge $E\in\mathscr{E}$ is a parallel vector field along the corresponding geodesic segment.
\end{definition}

\begin{rk}
By the second variation formula, any parallel vector field along $f_{0}$ is automatically Jacobi.
\end{rk}

\begin{definition}
A stationary geodesic network $f_{0}\in\Omega(\Gamma,M)$ with respect to a metric $g\in\mathcal{M}$ is nondegenerate if every Jacobi field $J$ along $f_{0}$ is parallel.
\end{definition}

\begin{definition}
Given a weighted multigraph $\Gamma$ and a Riemannian metric $g\in\mathcal{M}^{k}$, $g$ is said to be bumpy with respect to $\Gamma$ if every stationary geodesic network $f\in\Omega^{emb}(\Gamma,M)$ with respect to $g$ is nondegenerate. A Riemannian metric $g\in\mathcal{M}^{k}$ is said to be bumpy if it is bumpy with respect to $\Gamma$ for every good weighted multigraph $\Gamma$.
\end{definition}

\section{$C^{0}$ Banach manifold structure for the space of immersed paths under reparametrizations}\label{paths}
Consider the space $\Omega([0,1],M)$ of $C^{2}$ immersions $f:[0,1]\to M$, where $M$ is an $n$-dimensional smooth manifold provided with an auxiliary smooth Riemannian metric $\gamma_{0}$. Denote
\begin{equation*}
    \Diff_{2}([0,1])=\{\tau:[0,1]\to[0,1]:\tau\text{ is a }C^{2}\text{ diffeomorphism},\tau(0)=0,\tau(1)=1\}
\end{equation*}
Define an equivalence relation $\sim$ on $\Omega([0,1],M)$ as $f\sim g$ if and only if there exists $\tau\in\Diff_{2}([0,1])$ such that $f=g\circ\tau$. If that happens we will say that $f$ is a reparametrization of $g$. Let $\hat{\Omega}([0,1],M)=\Omega([0,1],M)/\sim$ be the quotient space by the equivalence relation $\sim$ with the quotient topology. The aim of this section is to give a $C^{0}$ Banach manifold structure for $\hat{\Omega}([0,1],M)$ (i.e. an atlas consisting of charts with values in a fixed Banach space whose transition maps are just continuous). Our constructions would also work if we replaced $C^{2}$ regularity by $C^{k}$ regularity for any $k\geq 1$, but we will focus on the case $k=2$ because that is what we are using in the rest of the paper.

\begin{rk}
We only get a $C^{0}$ Banach manifold structure (and not $C^{j}$ for any $j\geq 1$) due to the fact that, as it is shown below, the transition maps involve taking compositions and inverses of $C^{2}$ functions; and the operators $C^{k}(B,C)\times C^{k}(A,B)\to C^{k}(A,C)$, $(g,f)\mapsto g\circ f$ and $\Diff^{k}(A)\to\Diff^{k}(A)$, $f\mapsto f^{-1}$ (where $A$,$B$,$C$ are smooth manifolds) are continuous but not differentiable in the $C^{k}$ topology for $k<\infty$ (see for example \cite[p. 2]{Inci}).
\end{rk}

Let us fix $f_{0}\in\Omega([0,1],M)$. By density of the $C^{\infty}$ immersions, we can assume without loss of generality that $f_{0}$ is $C^{\infty}$ (any $[f_{0}]$ will be in the domain of a chart of $\hat{\Omega}(\Gamma,M)$ centered at $[\Tilde{f}_{0}]$ for some $\Tilde{f}_{0}$ of class $C^{\infty}$). This will allow us to apply the Tubular Neighborhood Theorem and have a $C^{\infty}$ exponential map. We want to describe a neighborhood of $[f_{0}]$ in $\hat{\Omega}([0,1],M)$. Take $\eta>0$ small so that $f_{0}$ can be extended to a $C^{\infty}$ immersion $f_{0}:(-\eta,1+\eta)\to M$. Denote by $N_{f_{0}}$ the normal bundle along $f_{0}:(-\eta,1+\eta)\to M$ and given $s>0$ let $N^{s}_{f_{0}}=\{v\in N_{f_{0}}:|v|_{\gamma_{0}}<s\}$ and $U_{f_{0}}^{s}=\mathcal{E}(N_{f_{0}}^{s})\subseteq M$ (where $\mathcal{E}:TM\to M$ is the exponential map with respect to the auxiliary metric $\gamma_{0}$). By the Tubular Neighborhood Theorem, there exists $r>0$ such that $\mathcal{E}:N_{f_{0}}^{r}\to U_{f_{0}}^{r}$ is a local diffeomorphism (it is actually a diffeomorphism if $f_{0}$ is an embedding).

\begin{lemma}\label{Lemma theta}
There exist a neighborhood $W_{1}$ of $f_{0}$ in $\Omega([0,1],M)$ and a neighborhood $\overline{W}_{2}$ of $h_{0}(t)=(t,0)$ in $\Omega([0,1],N_{f_{0}})$ such that the map $\overline{\Theta}:\overline{W}_{2}\to W_{1}$ defined as $\overline{\Theta}(h)=\mathcal{E}\circ h$ is a diffeomorphism of Banach manifolds. 
\end{lemma}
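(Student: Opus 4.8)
The plan is to show that $\overline{\Theta}(h) = \mathcal{E} \circ h$ is a well-defined smooth map on a suitable neighborhood, that it has a smooth inverse given by composing with the inverse of the tubular neighborhood diffeomorphism, and that both maps indeed land in the claimed neighborhoods. The key point making everything work is that $\mathcal{E}: N_{f_0}^r \to U_{f_0}^r$ is a local diffeomorphism (an honest diffeomorphism when $f_0$ is embedded), so locally near $h_0(t) = (t,0)$ it admits a smooth local inverse, which we will use to define $\overline{\Theta}^{-1}$.

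First I would fix the tubular radius $r > 0$ from the Tubular Neighborhood Theorem as in the paragraph preceding the lemma, so that $\mathcal{E}: N_{f_0}^r \to U_{f_0}^r \subseteq M$ is a local diffeomorphism. Since $h_0(t) = (t,0)$ has image the zero section (restricted to $[0,1]$), which lies in $N_{f_0}^r$, and since $[0,1]$ is compact, there is a neighborhood $\overline{W}_2$ of $h_0$ in $\Omega([0,1], N_{f_0})$ consisting of immersions $h$ with image contained in $N_{f_0}^r$; here I use that the $C^2$-topology controls $C^0$-distance uniformly, so a $C^2$-small perturbation of $h_0$ still maps into the open set $N_{f_0}^r$. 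For such $h$, the composition $\mathcal{E} \circ h: [0,1] \to M$ is $C^2$, and it is an immersion provided $h$ is close enough to $h_0$: indeed $\mathcal{E}\circ h_0 = f_0$ is an immersion, immersions form an open subset of $C^2([0,1],M)$, and $h \mapsto \mathcal{E}\circ h$ is continuous in the $C^2$-topology (composition with the fixed smooth map $\mathcal{E}$ on the left is a continuous — indeed smooth — operation, since only the inner map varies). Shrinking $\overline{W}_2$ if necessary, we get that $\overline{\Theta}$ maps $\overline{W}_2$ into $\Omega([0,1],M)$, and by continuity we may take its image to be contained in a prescribed neighborhood $W_1$ of $f_0$; smoothness of $\overline{\Theta}$ as a map of Banach manifolds follows from the standard fact (e.g. the $\omega$-lemma / smoothness of left composition with a fixed smooth map between finite-dimensional manifolds) that $h \mapsto \mathcal{E}\circ h$ is $C^\infty$.

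Next I would construct the inverse. Locally around each point of the zero section, $\mathcal{E}$ has a smooth local inverse; patching these (or using that $\mathcal{E}$ restricted to a slightly smaller tube is a covering onto its image, with the zero section in one sheet) gives a smooth map $\mathcal{E}^{-1}$ defined on a neighborhood $U$ of $f_0([0,1])$ in $M$ with values in $N_{f_0}^r$ and satisfying $\mathcal{E}^{-1}(f_0(t)) = (t,0) = h_0(t)$. Shrinking $W_1$ so that every $f \in W_1$ has image in $U$ — again possible by compactness of $[0,1]$ and $C^2 \Rightarrow C^0$ control — the assignment $\Xi(f) = \mathcal{E}^{-1}\circ f$ is a well-defined $C^\infty$ map $W_1 \to \Omega([0,1], N_{f_0})$ (it lands in immersions by the same openness argument, since $\Xi(f_0) = h_0$ is an immersion), and after shrinking we may assume $\Xi(W_1) \subseteq \overline{W}_2$. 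The identities $\overline{\Theta}\circ\Xi = \mathrm{id}$ and $\Xi\circ\overline{\Theta} = \mathrm{id}$ on the (possibly further shrunk) neighborhoods follow pointwise from $\mathcal{E}\circ\mathcal{E}^{-1} = \mathrm{id}_U$ and $\mathcal{E}^{-1}\circ\mathcal{E} = \mathrm{id}$ on $N_{f_0}^r$ (using here that the chosen local inverse agrees with $\mathcal{E}^{-1}$ along the relevant range — which is where one must be slightly careful when $f_0$ is only immersed, not embedded). Hence $\overline{\Theta}$ is a diffeomorphism of Banach manifolds between the resulting neighborhoods.

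The main obstacle — and the only genuinely delicate point — is the bookkeeping of the successive shrinkings of $W_1$ and $\overline{W}_2$ so that all four conditions hold simultaneously (image of $\overline{\Theta}$ inside $W_1$, image of $\Xi$ inside $\overline{W}_2$, both composites well-defined, and both equal to the identity), together with the subtlety in the merely-immersed case that $\mathcal{E}$ is only a \emph{local} diffeomorphism, so "$\mathcal{E}^{-1}$" must be chosen as a specific smooth branch near the zero section rather than a global inverse. Once one observes that every map in play ($\mathcal{E}$, the local branch of $\mathcal{E}^{-1}$, and left-composition by a fixed smooth map) is $C^\infty$, and that "immersion into an open set" is a $C^2$-open condition that is preserved under these operations and can be arranged near $f_0$ and $h_0$ by compactness of $[0,1]$, the result follows; I would carry out these shrinkings explicitly in the order: choose $r$, then $\overline{W}_2$ mapping into $N_{f_0}^r$, then shrink $W_1$ to contain $\overline{\Theta}(\overline{W}_2)$ and to lie in $U$, then shrink once more to make both composites identities.
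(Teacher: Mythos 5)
Your argument is fine in the case that $f_{0}$ is an embedding, and there it coincides with the paper's proof: take $W_{1}=\Omega([0,1],U^{r}_{f_{0}})$, $\overline{W}_{2}=\Omega([0,1],N^{r}_{f_{0}})$, and invert by postcomposition with the single smooth map $\mathcal{E}^{-1}$. The genuine gap is in your construction of the inverse when $f_{0}$ is merely an immersion with self-intersections. You claim that patching local inverses of $\mathcal{E}$ produces a smooth map $\mathcal{E}^{-1}$ defined on a neighborhood $U$ of $f_{0}([0,1])$ in $M$, with values in $N^{r}_{f_{0}}$ and satisfying $\mathcal{E}^{-1}(f_{0}(t))=(t,0)$ for all $t$. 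No such map exists: if $f_{0}(t_{1})=f_{0}(t_{2})$ with $t_{1}\neq t_{2}$, this single-valued map would have to send the point $f_{0}(t_{1})=f_{0}(t_{2})$ to both $(t_{1},0)$ and $(t_{2},0)$. The parenthetical alternative is equally problematic: near a self-intersection the restriction of $\mathcal{E}$ to a thinner tube is not a covering onto its image (the number of preimages is not locally constant), and the zero section does not lie in a single sheet, since it passes through the crossing point along two different local branches. So the step ``$\Xi(f)=\mathcal{E}^{-1}\circ f$'' is not well defined in the non-embedded case, and this is precisely the case the lemma must cover, because stationary geodesic nets restricted to an edge need not be embedded arcs in $M$.

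The repair — which is what the paper does — is to build the inverse at the level of the path space rather than as postcomposition with a map defined on a subset of $M$: choose finitely many intervals $I_{1},\dots,I_{K}$ covering $[0,1]$ on which $f_{0}$ is injective and on which $\mathcal{E}$ restricted to $N^{r}_{f_{0}}|_{I_{i}}$ is a diffeomorphism onto its image; for $f$ uniformly close to $f_{0}$, define $\overline{\Theta}'(f)$ on each $I_{i}$ by the branch of the inverse associated to $I_{i}$, and check that the definitions agree on overlaps (they do, because on $I_{i}\cap I_{j}$ both branches are inverses of $\mathcal{E}$ over the same portion of the bundle, and $f$ is close to $f_{0}$ there). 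The essential point your write-up misses is that the choice of branch must depend on the parameter $t\in[0,1]$, not only on the point $f(t)\in M$; once the inverse is defined in this $t$-dependent way, your remaining steps (smoothness of left composition, openness of the immersion condition, and the bookkeeping of shrinking the neighborhoods) go through as you describe.
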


\begin{proof}
    In case $\mathcal{E}:N^{r}(f_{0})\to U^{r}_{f_{0}}$ is a diffeomorphism, we can define
    \begin{align*}
    W_{1} & =\{f\in\Omega([0,1],M):\image(f)\subseteq U^{r}_{f_{0}}\}=\Omega([0,1],U^{r}_{f_{0}}),\\
    \overline{W}_{2} & =\{v\in\Omega([0,1],N_{f_{0}}):\image(v)\subseteq N_{f_{0}}^{r}\}=\Omega([0,1],N_{f_{0}}^{r})
\end{align*}
  and a map $\overline{\Theta}':W_{1}\to \overline{W}_{2}$ as $\overline{\Theta}'(f)=\mathcal{E}^{-1}\circ f$. Both $\overline{\Theta}$ and $\overline{\Theta}'$ are smooth maps of Banach manifolds, and inverses of each other so we get the desired result. When the immersion $f_{0}$ is not injective, $\overline{\Theta}'(f)=\mathcal{E}^{-1}\circ f$ is not well defined globally, but we can define it locally over a finite collection of intervals $\{I_{i}\}_{1\leq i\leq K}$ covering $[0,1]$ such that $f_{0}$ is injective along $I_{i}$ and $\mathcal{E}$ is a diffeomorphism when restricted to $N_{f_{0}}^{r}|_{I_{i}}$ for each $i\leq i\leq K$ and some $r>0$. By a gluing argument, we can construct a smooth inverse $\overline{\Theta}'$ for $\overline{\Theta}$.
\end{proof}

\begin{cor}\label{Cor phi}
Let $\phi:G=(-\eta,1+\eta)\times\mathbb{R}^{n-1}\to N_{f_{0}}$ be a trivialization of the normal bundle $N_{f_{0}}$. Let $v_{0}:[0,1]\to G$ be the map $v_{0}(t)=(t,0)$. Then taking $W_{1}\subseteq\Omega([0,1],M)$ from the previous lemma, there exists a neighborhood $W_{2}\subseteq\Omega([0,1],G)$ of $v_{0}$ such that $\Theta:W_{2}\to W_{1}$ given by $\Theta(v)=\mathcal{E}\circ\phi\circ v$ is a diffeomorphism of Banach manifolds.
\end{cor}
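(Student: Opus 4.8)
The plan is to reduce the Corollary directly to Lemma \ref{Lemma theta} by pushing forward along the fixed trivialization $\phi$. Since $\phi\colon G\to N_{f_{0}}$ is a vector bundle isomorphism covering the identity of $(-\eta,1+\eta)$, it is in particular a smooth diffeomorphism of manifolds which carries the zero section to the zero section; consequently $\phi\circ v_{0}=h_{0}$, where $v_{0}(t)=(t,0)$ and $h_{0}(t)=(t,0)\in N_{f_{0}}$ is as in Lemma \ref{Lemma theta}.

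First I would invoke the standard fact (the $\omega$-lemma for manifolds of maps, as in \cite{Inci}) that post-composition with a fixed smooth map induces a smooth map of the corresponding $C^{2}$ mapping manifolds, and that post-composition with a diffeomorphism induces a diffeomorphism. Applied here, the map $\Phi(v)=\phi\circ v$ is well defined from $\Omega([0,1],G)$ to $\Omega([0,1],N_{f_{0}})$ — a diffeomorphism sends immersions to immersions — and is a diffeomorphism of Banach manifolds with inverse $w\mapsto\phi^{-1}\circ w$; here $\Omega([0,1],G)$ and $\Omega([0,1],N_{f_{0}})$ are regarded as open subsets of the Banach manifolds $C^{2}([0,1],G)$ and $C^{2}([0,1],N_{f_{0}})$.

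Then I would set $W_{2}=\Phi^{-1}(\overline{W}_{2})$, where $\overline{W}_{2}$ is the neighborhood of $h_{0}$ furnished by Lemma \ref{Lemma theta}. Because $\Phi(v_{0})=h_{0}\in\overline{W}_{2}$ and $\Phi$ is continuous, $W_{2}$ is an open neighborhood of $v_{0}$ in $\Omega([0,1],G)$. The composition $\Theta:=\overline{\Theta}\circ\Phi|_{W_{2}}\colon W_{2}\to W_{1}$ is then a composition of diffeomorphisms of Banach manifolds, hence a diffeomorphism, and for $v\in W_{2}$ one has $\Theta(v)=\overline{\Theta}(\phi\circ v)=\mathcal{E}\circ\phi\circ v$, which is exactly the formula in the statement.

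The only delicate point — the closest thing to an obstacle — is the appeal to the $\omega$-lemma: one needs that $w\mapsto\phi\circ w$ is not merely continuous but \emph{smooth} as a map of $C^{2}$ mapping manifolds. This is where it matters that $\phi$ is a \emph{fixed} smooth (indeed fiberwise linear) map, so that no loss of derivatives occurs, in contrast with the transition maps of $\hat{\Omega}([0,1],M)$, which involve reparametrizations and are only continuous. If one prefers to avoid quoting the $\omega$-lemma, smoothness of $\Phi$ can be checked by hand: in vector bundle charts it reduces to post-composition with a smooth map of finite-dimensional manifolds, combined with the standard local description of $C^{2}([0,1],\cdot)$, which is manifestly smooth.
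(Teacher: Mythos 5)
Your argument is correct and is exactly the reduction the paper intends: the corollary is stated without a separate proof precisely because post-composition with the fixed smooth bundle isomorphism $\phi$ gives a diffeomorphism $\Omega([0,1],G)\to\Omega([0,1],N_{f_{0}})$ carrying $v_{0}$ to $h_{0}$, so one pulls back $\overline{W}_{2}$ and composes with $\overline{\Theta}$ from Lemma \ref{Lemma theta}. Your identification of the only delicate point (smoothness, not mere continuity, of post-composition with a fixed smooth map, in contrast with the reparametrization maps) is also accurate.
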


Therefore it is enough to model a neighborhood of $[v_{0}]\in\hat{\Omega}([0,1],G)$ as an open subset of a Banach manifold.

\begin{prop}\label{Proposition 1}
Given $v\in\Omega([0,1],G)$ let us denote $a_{v}=\pi(v(0))$ and $b_{v}=\pi(v(1))$, where $\pi:G=(-\eta,1+\eta)\times\mathbb{R}^{n-1}\to(-\eta,1+\eta)$ is the projection onto the first coordinate. There exists an open neighborhood $v_{0}\in W_{3}\subseteq W_{2}\subseteq\Omega([0,1],G)$ with the following property: for every $v\in W_{3}$ there exists a section $\tilde{v}$ of $G|_{[a_{v},b_{v}]}$ such that the map $\tilde{v}:[a_{v},b_{v}]\to G$ is a reparametrization of $v$.
\end{prop}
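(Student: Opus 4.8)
The plan is to reparametrize $v$ by its own first coordinate. Given $v \in W_2$, write $v(t) = (v_1(t), v_2(t))$ with $v_1 : [0,1] \to (-\eta, 1+\eta)$ and $v_2 : [0,1] \to \mathbb{R}^{n-1}$; note $v_1 = \pi \circ v$ and $a_v = v_1(0)$, $b_v = v_1(1)$. For $v_0$ we have $v_1 \equiv \mathrm{id}$, so $\dot{v}_1 \equiv 1 > 0$. First I would observe that the condition ``$\dot v_1 > 0$ on $[0,1]$'' is open in the $C^2$ topology (in fact already in $C^1$), so there is a neighborhood $W_3 \subseteq W_2$ of $v_0$ on which every $v$ has $\dot v_1 > 0$ everywhere. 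For such $v$, the map $v_1 : [0,1] \to [a_v, b_v]$ is a $C^2$ diffeomorphism onto its image (strictly increasing, and onto $[a_v,b_v]$ since $v_1(0) = a_v$, $v_1(1) = b_v$), with $C^2$ inverse $\tau_v := v_1^{-1} : [a_v, b_v] \to [0,1]$.

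Then I would set $\tilde v := v \circ \tau_v : [a_v, b_v] \to G$. By construction $\tilde v$ is a reparametrization of $v$ in the appropriate sense (it differs from $v$ by precomposition with the $C^2$ diffeomorphism $\tau_v$, after the affine reidentification of $[a_v,b_v]$ with $[0,1]$ that matches the convention from $\Diff_2([0,1])$; since $v$ was already assumed to be an immersion, so is $\tilde v$, and $\tilde v$ remains inside $W_1$ under $\Theta$ by shrinking $W_3$ if necessary). It remains to check that $\tilde v$ is a \emph{section} of $G|_{[a_v, b_v]}$, i.e. that $\pi \circ \tilde v = \mathrm{id}_{[a_v,b_v]}$. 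But $\pi \circ \tilde v = \pi \circ v \circ \tau_v = v_1 \circ v_1^{-1} = \mathrm{id}_{[a_v,b_v]}$, which is exactly what is required.

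I do not expect a serious obstacle here; the statement is essentially a bookkeeping lemma. The only mildly delicate points are: (i) making sure the openness of $\{\dot v_1 > 0\}$ is used in the $C^2$ (not merely $C^0$) topology so that $W_3$ is genuinely $C^2$-open — this is immediate since $v \mapsto \dot v_1$ is continuous $\Omega([0,1],G) \to C^1([0,1],\mathbb{R})$ and evaluation/minimum over the compact $[0,1]$ is continuous; and (ii) keeping track of the identification of the variable interval $[a_v,b_v]$ with $[0,1]$ so that ``reparametrization'' matches Definition of $\sim$ on $\Omega([0,1],M)$ — this is just the unique orientation-preserving affine map $[a_v,b_v] \to [0,1]$, which is $C^2$ and depends continuously on $(a_v,b_v)$. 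With these understood, the conclusion follows as above.
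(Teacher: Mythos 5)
Your proof is correct, and the section you produce is literally the same object as in the paper: $\tilde v = v\circ(\pi\circ v)^{-1}$, with $v=\tilde v\circ v_1$ recovering $v$ as a reparametrization (the paper writes this as $v=\tilde v\circ\chi_{a_v b_v}\circ\theta_v$, and indeed $\chi_{a_v b_v}\circ\theta_v=\pi\circ v$). The only real difference is how you arrange for $\pi\circ v$ to be a diffeomorphism onto $[a_v,b_v]$. The paper takes $W_3$ to be an explicit $C^2$-ball of radius $\delta<\tfrac17$ and first proves only the weak inequality $(\pi\circ v)'\geq 0$ by a quantitative contradiction argument (comparing $v$ with the straight segment $t\mapsto((1-t)a_v+tb_v,0)$ and using the triangle inequality), after which it must shrink $\delta$ a second time to upgrade $\theta_v$ to a $C^2$ diffeomorphism. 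You instead impose the strict condition $\dot v_1>0$ directly and observe that it is open in the $C^1$ (hence $C^2$) topology around $v_0$, since $\dot{(v_0)}_1\equiv 1$ and the minimum of $\dot v_1$ over the compact interval depends continuously on $v$; this gives the diffeomorphism property in one step, avoids the $\tfrac17$ bookkeeping, and even yields injectivity of $v$ for free (the paper assumes each $v\in W_3$ is an embedding as part of its choice of $\delta$). Your soft argument is cleaner for the proposition as stated; the paper's explicit ball $\{\Vert v-v_0\Vert_2<\delta\}$ is slightly more convenient downstream, where the same $\delta$ is reused to define the domain of the chart map $\Xi$, but nothing in the proposition requires that. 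The parenthetical remark about $\tilde v$ staying inside $W_1$ under $\Theta$ is not needed for the statement and can be dropped.
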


\begin{proof}
Take $\delta<\eta$ such that $W_{3}:=\{v\in\Omega([0,1],G):\Vert v-v_{0}\Vert_{2}<\delta\}$ is contained in $W_{2}$ and each $v\in W_{3}$ is an embedding. Assume also that $\delta<\frac{1}{7}$. Pick $v\in W_{3}$. First, we want to prove that $\pi(v([0,1]))=[a_{v},b_{v}]$. Notice that it suffices to show $\pi(v([0,1]))\subseteq [a_{v},b_{v}]$. Define $v_{1}:[0,1]\to G$ by  $v_{1}(t)=((1-t)a_{v}+tb_{v},0)$. Consider the map $w=\pi\circ v:[0,1]\to\mathbb{R}$ which is just the first component of $v$. We claim that $w'(t)\geq 0$ for all $t\in[0,1]$. Suppose not. Then there exists $t_{0}\in[0,1]$ such that $w'(t_{0})<0$. Therefore,
\begin{equation*}
    |v'(t_{0})-v_{1}'(t_{0})|\geq |\pi(v'(t_{0})-v_{1}'(t_{0}))|=|w'(t_{0})-(b_{v}-a_{v})|\geq b_{v}-a_{v}>1-2 \delta.
\end{equation*}
On the other hand, it is easy to see that $\Vert v_{0}-v_{1} \Vert_{2}<2(|a_{v}|+|b_{v}-1|)<4\delta$, then from the previous
\begin{equation*}
    \Vert v-v_{0}\Vert_{2}\geq \Vert v-v_{1}\Vert_{2}-\Vert v_{1}-v_{0}\Vert_{2}>(1-2\delta)-4\delta=1-6\delta>\delta
\end{equation*}
as $\delta<\frac{1}{7}$, which is a contradiction because we assumed $\Vert v-v_{0}\Vert_{2}<\delta$. Then, $w'(t)\geq 0$ for all $t\in[0,1]$ and as $w(0)=a_{v}$ we deduce $w(t)\geq a_{v}$ for all $t\in[0,1]$. Analogously, $w(t)\leq b_{v}$ for all $t\in [0,1]$ and hence $\pi(v([0,1]))\subseteq [a_{v},b_{v}]$ as desired.

Given $a,b\in(-\eta,1+\eta)$ with $a<b$ define $\tau_{ab}:[a,b]\to [0,1]$ as $\tau_{ab}(t)=\frac{t-a}{b-a}$. Notice that $\tau_{ab}$ is the inverse of $\chi_{ab}:[0,1]\to[a,b]$ given by $\chi_{ab}(t)=(1-t)a+tb$. By the previous, each $v\in W_{3}$ induces a smooth function $\theta_{v}:=\tau_{a_{v}b_{v}}\circ\pi\circ v:[0,1]\to[0,1]$. Explicitly, $\theta_{v}(t)=\frac{\pi(v(t))-\pi(v(0))}{\pi(v(1))-\pi(v(0))}$. As $\theta_{v_{0}}=id$, shrinking $\delta$ again if necessary we can assume that $v\in W_{3}$ implies $\theta_{v}:[0,1]\to[0,1]$ is a $C^{2}$ diffeomorphism fixing $0$ and $1$. In that case, $\pi\circ v$ and hence $\pi:v([0,1])\to [a_{v},b_{v}]$ are diffeomorphisms. If we denote $\tilde{v}:[a_{v},b_{v}]\to v([0,1])$ the inverse of $\pi:v([0,1])\to[a_{v},b_{v}]$, then $\tilde{v}$ is a section of $G|_{[a_{v},b_{v}]}$ and we have $v=\tilde{v}\circ \chi_{a_{v}b_{v}}\circ\theta_{v}$ being $v$ a reparametrization of $\tilde{v}$.
\end{proof}

The previous tells us that if we take $a\in(-\delta,\delta)$, $b\in(1-\delta,1+\delta)$ and $u\in C^{2}([0,1],\mathbb{R}^{n-1})$ where $\delta$ satisfies the requirements from above, we can define a map $v_{abu}:[0,1]\to G$ as $v_{abu}(t)=((1-t)a+tb,u(t))$ so that every $v\in W_{3}$ is a reparametrization of some $v_{abu}$. Specifically, given $v\in W_{3}$ if $v=\tilde{v}\circ\chi_{a_{v}b_{v}}\circ\theta_{v}$ as above and $\tilde{v}(s)=(s,\tilde{u}(s))$ then we must choose $a=a_{v}$, $b=b_{v}$ and $u=\tilde{u}\circ\chi_{a_{v}b_{v}}$. Consider the map $\Xi:(-\delta,\delta)\times(1-\delta,1+\delta)\times C^{2}([0,1],\mathbb{R}^{n-1})\to\Omega([0,1],G)$ given by $\Xi(a,b,u)=v_{abu}$. Denote $p:\Omega([0,1],G)\to\hat{\Omega}([0,1],G)$ the projection map.

\begin{lemma}
The map $\Xi':W_{3}\to (-\delta,\delta)\times(1-\delta,1+\delta)\times C^{2}([0,1],\mathbb{R}^{n-1})$ given by $\Xi'(v)=(a_{v},b_{v},u_{v})$ with $u_{v}=\tilde{u}\circ\chi_{a_{v}b_{v}}$ as described before is continuous.
\end{lemma}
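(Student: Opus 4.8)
The plan is to show continuity of $\Xi'$ by factoring it through continuous maps and then establishing that each factor is continuous in the $C^2$ topology. First I would observe that the assignment $v\mapsto a_v=\pi(v(0))$ and $v\mapsto b_v=\pi(v(1))$ are continuous, since evaluation at a point $C^2([0,1],G)\to G$ is continuous (indeed bounded linear in suitable charts) and $\pi$ is smooth. So the only content is the continuity of $v\mapsto u_v\in C^2([0,1],\mathbb{R}^{n-1})$. Recall from Proposition \ref{Proposition 1} that $\theta_v(t)=\tau_{a_vb_v}(\pi(v(t)))=\frac{\pi(v(t))-a_v}{b_v-a_v}$ is a $C^2$ diffeomorphism of $[0,1]$ for $v\in W_3$, and that $v=\tilde v\circ\chi_{a_vb_v}\circ\theta_v$, from which one extracts $u_v=\tilde u\circ\chi_{a_vb_v}$ where $\tilde v(s)=(s,\tilde u(s))$. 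Unwinding these identities, $v(t)=(\,(1-\theta_v(t))a_v+\theta_v(t)b_v,\ u_v(\theta_v(t))\,)$, so writing $q=\mathrm{pr}_2\circ v\in C^2([0,1],\mathbb{R}^{n-1})$ for the $\mathbb{R}^{n-1}$-component of $v$, we get the clean formula
\begin{equation*}
    u_v=q\circ\theta_v^{-1}.
\end{equation*}

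Next I would express $\theta_v$ itself continuously in terms of $v$: the map $v\mapsto\pi\circ v$ is continuous $C^2([0,1],G)\to C^2([0,1],\mathbb{R})$ (post-composition with the smooth map $\pi$, which is continuous on $C^2$-spaces by the usual estimates on $C^k$ composition), and then $v\mapsto\theta_v=\frac{\pi\circ v-a_v}{b_v-a_v}$ is continuous because addition, scalar multiplication, and the inversion $b-a\mapsto(b-a)^{-1}$ (valid since $b_v-a_v>1-2\delta>0$ on $W_3$) are continuous operations on $C^2$. Likewise $v\mapsto q=\mathrm{pr}_2\circ v$ is continuous. So $\Xi'$ is the composite of the continuous map $v\mapsto(q,\theta_v)$ into $C^2([0,1],\mathbb{R}^{n-1})\times\mathrm{Diff}_2([0,1])$ with the map $(q,\theta)\mapsto q\circ\theta^{-1}$.

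It therefore remains to invoke continuity of $(q,\theta)\mapsto q\circ\theta^{-1}$ on $C^2([0,1],\mathbb{R}^{n-1})\times\mathrm{Diff}_2([0,1])$. This is exactly the composition-and-inversion continuity cited in the remark after the statement of the section's goal (cf.\ \cite{Inci}): the inversion map $\mathrm{Diff}_2([0,1])\to\mathrm{Diff}_2([0,1])$, $\theta\mapsto\theta^{-1}$, is continuous in the $C^2$ topology, and the composition map $C^2([0,1],\mathbb{R}^{n-1})\times\mathrm{Diff}_2([0,1])\to C^2([0,1],\mathbb{R}^{n-1})$, $(q,\sigma)\mapsto q\circ\sigma$, is continuous in the $C^2$ topology. (One has to check that $\theta_v$ stays in a fixed neighborhood of $\mathrm{id}$ inside $\mathrm{Diff}_2([0,1])$ as $v$ ranges over $W_3$, which follows from shrinking $\delta$ as in Proposition \ref{Proposition 1}, so that the relevant open set on which inversion is continuous is actually hit.) Composing all these continuous maps yields the continuity of $\Xi'$.

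The main obstacle is purely the delicate point that composition and inversion of $C^2$ maps are only continuous, not differentiable — but since we are only claiming continuity of $\Xi'$ here, this is not an obstruction, merely the reason the resulting Banach manifold structure on $\hat\Omega([0,1],M)$ is $C^0$ and no better. The one genuinely technical step deserving care is confirming that $v\mapsto\theta_v$ lands in (and stays within) the open subset of $\mathrm{Diff}_2([0,1])$ on which the inversion map is known to be continuous; this is handled by the quantitative estimates already established in the proof of Proposition \ref{Proposition 1}.
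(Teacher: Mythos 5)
Your proposal is correct and follows essentially the same route as the paper: both reduce to the identity $u_{v}=\tilde{\pi}\circ v\circ\theta_{v}^{-1}$ (your $q\circ\theta_{v}^{-1}$) and then invoke continuity of composition and inversion of $C^{2}$ maps as in \cite{Inci}. Your additional details (continuity of $v\mapsto a_{v},b_{v},\theta_{v}$ via the lower bound on $b_{v}-a_{v}$) merely flesh out steps the paper leaves implicit.
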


\begin{proof}
It is enough to show that $\Xi'_{3}:W_{3}\to C^{2}([0,1],\mathbb{R}^{n-1})$ defined as $\Xi'_{3}(v)=u_{v}$ is continuous. Let $\tilde{\pi}:G\to\mathbb{R}^{n-1}$ be the projection onto the last $n-1$ coordinates so that $\tilde{u}=\tilde{\pi}\circ\tilde{v}$. We have
\begin{equation*}
    u_{v}=\tilde{u}\circ\chi_{a_{v}b_{v}}=\tilde{\pi}\circ\tilde{v}\circ\chi_{a_{v}b_{v}}=\tilde{\pi}\circ v\circ\theta_{v}^{-1}\circ\tau_{a_{v}b_{v}}\circ\chi_{a_{v}b_{v}}=\tilde{\pi}\circ v\circ\theta_{v}^{-1}.
\end{equation*}
But $v\mapsto\theta_{v}^{-1}$ is continuous because so is $v\mapsto\theta_{v}$ and $\theta\mapsto\theta^{-1}$ for $\theta\in\Diff_{2}([0,1])$. Therefore by continuity of the composition, $\Xi'$ is continuous. Notice that the differentiability  fails as we are precomposing with and taking inverses of $C^{2}$ maps, which are not differentiable operations on spaces of $C^{2}$ functions (\cite[p. 2]{Inci}).
\end{proof}

\begin{rk}\label{Rk xi}
Given $v\in W_{3}$ we have $p\circ\Xi\circ\Xi'(v)=p(v)$.
\end{rk}

\begin{lemma}
Define $W_{4}=\Xi^{-1}(W_{3})\subseteq\mathbb{R}^{2}\times C^{2}([0,1],\mathbb{R}^{n-1})$. Then $p\circ\Xi:W_{4}\to\hat{\Omega}([0,1],G)$ is injective.
\end{lemma}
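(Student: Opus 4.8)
The plan is to show that if $\Xi(a,b,u)$ and $\Xi(a',b',u')$ are reparametrizations of each other (which is exactly what $p\circ\Xi(a,b,u)=p\circ\Xi(a',b',u')$ means), then $(a,b,u)=(a',b',u')$. So suppose $v_{abu}=v_{a'b'u'}\circ\tau$ for some $\tau\in\Diff_{2}([0,1])$. First I would recover $a,b$ from the geometry: since $v_{abu}(t)=((1-t)a+tb,u(t))$, the first coordinate $\pi\circ v_{abu}$ runs affinely from $a$ at $t=0$ to $b$ at $t=1$, so $a_{v_{abu}}=a$ and $b_{v_{abu}}=b$, and these endpoint values of the first coordinate are reparametrization-invariant (because $\tau$ fixes $0$ and $1$). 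Hence $a=a'$ and $b=b'$.

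Next, with $a=a'$ and $b=b'$, I would show $\tau$ is the identity. Apply $\pi$ to the relation $v_{abu}=v_{abu'}\circ\tau$: the left side is $t\mapsto (1-t)a+tb$ and the right side is $t\mapsto (1-\tau(t))a+\tau(t)b$. Since $a\neq b$ (as $a<\delta<1-\delta<b$), this forces $\tau(t)=t$ for all $t$. Therefore $v_{abu}=v_{abu'}$, and comparing the last $n-1$ coordinates gives $u=u'$. This completes the proof that $p\circ\Xi$ is injective on $W_{4}$.

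The only point that requires a little care — and the one I'd single out as the main (mild) obstacle — is justifying that the equivalence $v_{abu}\sim v_{a'b'u'}$ in $\hat{\Omega}([0,1],G)$ is realized by a single $\tau\in\Diff_{2}([0,1])$ with $\tau(0)=0$, $\tau(1)=1$, rather than by a chain of such relations or by a $\tau$ swapping the endpoints; this follows from the definition of $\sim$ on $\Omega([0,1],M)$ (an honest equivalence relation generated by $\Diff_{2}([0,1])$, whose elements fix $0$ and $1$), together with the fact that both $v_{abu}$ and $v_{a'b'u'}$ lie in $W_{3}$, where by Proposition \ref{Proposition 1} every element is an embedding, so the reparametrization between any two equivalent embeddings is unique. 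Everything else is the elementary extraction of $a$, $b$, and $u$ as reparametrization invariants carried out above.
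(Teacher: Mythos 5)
Your proof is correct and follows essentially the same route as the paper: recover $a=a'$ and $b=b'$ as reparametrization invariants (you do it by evaluating at the endpoints fixed by $\tau$, the paper by comparing images of the first coordinate), then use that the first coordinate is affine with $a\neq b$ to force $\tau=\mathrm{id}$ and hence $u=u'$. The point you flag about a single $\tau\in\Diff_{2}([0,1])$ realizing the equivalence is indeed immediate from $\Diff_{2}([0,1])$ being a group whose elements fix $0$ and $1$, so no extra uniqueness argument is needed.
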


\begin{proof}
Suppose $p\circ\Xi(a_{1},b_{1},u_{1})=p\circ\Xi(a_{2},b_{2},u_{2})$ for some $(a_{1},b_{1},u_{1}),(a_{2},b_{2},u_{2})\in U$. Denote $v_{i}=v_{a_{i}b_{i}u_{i}}\in W_{3}$ for $i=1,2$; being $[v_{1}]=[v_{2}]$. Then $v_{1}$ and $v_{2}$ have the same image, so $\pi\circ v_{1}([0,1])=\pi\circ v_{2}([0,1])$ which means $[a_{1},b_{1}]=[a_{2},b_{2}]$ hence $a_{1}=b_{1}$ and $a_{2}=b_{2}$. Therefore, $v_{1}(t)=(a_{1}(1-t)+b_{1}t,u_{1}(t))$ is a reparametrization of $v_{2}(t)=(a_{1}(1-t)+b_{1}t,u_{2}(t))$. By looking at the first coordinate we deduce that the reparametrization must be just composing with the identity, and hence $u_{1}=u_{2}$.
\end{proof}

\begin{lemma}\label{Lemma section}
There exists an open neighborhood $W_{5}\subseteq W_{4}\subseteq(-\delta,\delta)\times(1-\delta,1+\delta)\times C^{2}([0,1],\mathbb{R}^{n-1})$ of $(0,1,0)$ such that $\Xi'\circ\Xi(a,b,u)=(a,b,u)$ for all $(a,b,u)\in W_{5}$.
\end{lemma}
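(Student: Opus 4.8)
The plan is to show that one may simply take $W_5=W_4$, and that the identity $\Xi'\circ\Xi=\mathrm{id}$ holds on all of $W_4$ by a direct computation. First I would record that $W_4$ is a genuine open neighborhood of $(0,1,0)$: the map $\Xi\colon\mathbb{R}^2\times C^2([0,1],\mathbb{R}^{n-1})\to\Omega([0,1],G)$, $(a,b,u)\mapsto v_{abu}$ with $v_{abu}(t)=((1-t)a+tb,u(t))$, is affine in $(a,b,u)$ and hence continuous, so $W_4=\Xi^{-1}(W_3)$ is open; and $\Xi(0,1,0)=v_0\in W_3$ shows $(0,1,0)\in W_4$. In particular $\Xi(W_4)\subseteq W_3$, so that $\Xi'$ is legitimately defined on $\Xi(W_4)$ via the constructions of Proposition~\ref{Proposition 1} and the preceding lemmas.

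Next, fixing $(a,b,u)\in W_4$ and setting $v=v_{abu}$, I would compute the three components of $\Xi'(v)=(a_v,b_v,u_v)$ directly. From $v(t)=((1-t)a+tb,u(t))$ one reads off $a_v=\pi(v(0))=a$ and $b_v=\pi(v(1))=b$. Then
\begin{equation*}
    \theta_v(t)=\frac{\pi(v(t))-\pi(v(0))}{\pi(v(1))-\pi(v(0))}=\frac{\big((1-t)a+tb\big)-a}{b-a}=t,
\end{equation*}
so $\theta_v=\mathrm{id}$ and $\theta_v^{-1}=\mathrm{id}$. Using the identity $u_v=\tilde{\pi}\circ v\circ\theta_v^{-1}$ obtained in the continuity lemma above (with $\tilde{\pi}$ the projection onto the last $n-1$ coordinates), this gives $u_v(s)=\tilde{\pi}(v(s))=u(s)$, i.e. $u_v=u$. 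Hence $\Xi'(\Xi(a,b,u))=(a_v,b_v,u_v)=(a,b,u)$ for every $(a,b,u)\in W_4$, and $W_5=W_4$ works.

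I do not expect a real obstacle here: the lemma is essentially tautological, expressing that the straightening recipe $\Xi$ of Proposition~\ref{Proposition 1} produces nets whose first coordinate is already affine with the correct endpoints, so that the coordinate-extraction map $\Xi'$ recovers $(a,b,u)$ on the nose. The only points requiring a word of care are that $\Xi'$ must actually be applicable (guaranteed by $\Xi(W_4)\subseteq W_3$), and that the representative $\tilde{v}$ of $v_{abu}$ used in the definition of $u_v$ is $\tilde{v}(s)=(s,u(\tau_{ab}(s)))$ — which follows from $v=\tilde{v}\circ\chi_{a_vb_v}\circ\theta_v=\tilde{v}\circ\chi_{ab}$ and yields the same conclusion $u_v=\tilde{u}\circ\chi_{ab}=u$.
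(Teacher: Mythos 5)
Your proof is correct, and it takes a genuinely more direct route than the paper's. You verify the identity $\Xi'\circ\Xi=\mathrm{id}$ on all of $W_{4}$ by explicit computation: for $v=v_{abu}$ with $(a,b,u)\in W_{4}=\Xi^{-1}(W_{3})$ one reads off $a_{v}=a$, $b_{v}=b$, $\theta_{v}=\mathrm{id}$, and hence $u_{v}=\tilde{\pi}\circ v\circ\theta_{v}^{-1}=u$ (equivalently $\tilde{v}(s)=(s,u(\tau_{ab}(s)))$, so $\tilde{u}\circ\chi_{ab}=u$), so one may take $W_{5}=W_{4}$, which is indeed an open neighborhood of $(0,1,0)$ since $\Xi$ is continuous and $\Xi(0,1,0)=v_{0}\in W_{3}$. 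The paper argues instead by soft means: it sets $W_{5}=W_{4}\cap(\Xi'\circ\Xi)^{-1}(W_{4})$, open by continuity of $\Xi$ and $\Xi'$, and then combines Remark \ref{Rk xi} (the identity $p\circ\Xi\circ\Xi'=p$ on $W_{3}$) with the injectivity of $p\circ\Xi$ on $W_{4}$ to force $\Xi'\circ\Xi(a,b,u)=(a,b,u)$. Your computation buys a slightly stronger conclusion (the identity on all of $W_{4}$, with no shrinking) and avoids invoking the injectivity lemma and Remark \ref{Rk xi}, at the cost of unwinding the explicit construction of $\Xi'$ from Proposition \ref{Proposition 1}; the paper's argument is more formal, using only properties of $\Xi$ and $\Xi'$ already recorded, but yields a possibly smaller neighborhood. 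Either version is adequate for the subsequent use in Theorem \ref{homeo thm}.
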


\begin{proof}
First of all observe that $\Xi(0,1,0)=v_{0}:t\mapsto(t,0)$ and by definition $\Xi'(v_{0})=(0,1,0)$ so $\Xi'\circ\Xi(0,1,0)=(0,1,0)$. Set $W_{5}=W_{4}\cap(\Xi'\circ\Xi)^{-1}(W_{4})$, by continuity of $\Xi$ and $\Xi'$ and the previous observation $W_{5}$ is an open neighborhood of $(0,1,0)$. Given $(a,b,u)\in W_{5}$ let $(\tilde{a},\tilde{b},\tilde{u})=\Xi'\circ\Xi(a,b,u)\in W_{4}$. Then $p\circ\Xi(\tilde{a},\tilde{b},\tilde{u})=p\circ\Xi\circ\Xi'\circ\Xi(a,b,u)=p\circ\Xi(a,b,u)$ because of Remark \ref{Rk xi} and the fact that $\Xi(a,b,u)\in W_{3}$. As $(a,b,u),(\tilde{a},\tilde{b},\tilde{u})\in W_{4}$ and $p\circ\Xi|_{W_{4}}$ is injective we deduce $(a,b,u)=(\tilde{a},\tilde{b},\tilde{u})$.
\end{proof}

Let us provide $\mathbb{R}\times\mathbb{R}\times C^{2}([0,1],\mathbb{R}^{n-1})$ with the norm $\Vert (a,b,u)\Vert=|a|+|b|+\Vert u\Vert_{2}$ making it a Banach space. Notice that $\Xi:\mathbb{R}\times\mathbb{R}\times C^{2}([0,1],\mathbb{R}^{n-1})\to C^{2}([0,1],\mathbb{R}^{n})$ is linear and
\begin{equation*}
    \frac{1}{3}\Vert (a,b,u)\Vert\leq\Vert v_{abu}\Vert_{2}=\Vert\Xi(a,b,u)\Vert_{2}\leq 2\Vert(a,b,u)\Vert
\end{equation*}
therefore $C^{2}_{*}([0,1],\mathbb{R}^{n}):=\image(\Xi)\subseteq C^{2}([0,1],\mathbb{R}^{n})$ is a closed subspace of $ C^{2}([0,1],\mathbb{R}^{n})$ and by the Open Mapping Theorem $\Xi:\mathbb{R}\times\mathbb{R}\times C^{2}([0,1],\mathbb{R}^{n-1})\to C^{2}_{*}([0,1],\mathbb{R}^{n})$ is an isomorphism of Banach spaces.

\begin{thm}\label{homeo thm}
The subset $\hat{W_{5}}:=p\circ\Xi(W_{5})\subseteq\hat{\Omega}([0,1],G)$ is open and $p\circ\Xi:W_{5}\to\hat{W}_{5}$ is a homeomorphism.
\end{thm}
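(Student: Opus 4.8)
The plan is to deduce both statements from a single cleaner object: the set $W_{6}:=(\Xi')^{-1}(W_{5})\subseteq W_{3}$, which I will show has the same image under $p$ as $\Xi(W_{5})$. First I would record that $W_{6}$ is open in $\Omega([0,1],G)$ (since $\Xi'$ is continuous and $W_{5}$ is open) and that it contains $v_{0}$, because $\Xi'(v_{0})=(0,1,0)\in W_{5}$. Then I would prove $p(W_{6})=\hat{W}_{5}$. For ``$\subseteq$'': if $w\in W_{6}$ then $\Xi'(w)\in W_{5}$, and Remark \ref{Rk xi} gives $p(\Xi(\Xi'(w)))=p(w)$, so $[w]=(p\circ\Xi)(\Xi'(w))\in\hat{W}_{5}$. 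For ``$\supseteq$'': if $(a,b,u)\in W_{5}\subseteq W_{4}=\Xi^{-1}(W_{3})$ then $\Xi(a,b,u)\in W_{3}$, and Lemma \ref{Lemma section} gives $\Xi'(\Xi(a,b,u))=(a,b,u)\in W_{5}$, so $\Xi(a,b,u)\in W_{6}$ and $(p\circ\Xi)(a,b,u)\in p(W_{6})$.

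Next I would observe that the quotient map $p$ is \emph{open}. Indeed, for $\tau\in\Diff_{2}([0,1])$ the reparametrization operator $R_{\tau}(w)=w\circ\tau$ is a homeomorphism of $\Omega([0,1],G)$: it carries $C^{2}$ immersions into $G$ to $C^{2}$ immersions into $G$, and it is the restriction to the open subset $\Omega([0,1],G)\subseteq C^{2}([0,1],\mathbb{R}^{n})$ of the bounded linear operator $g\mapsto g\circ\tau$ (boundedness by the chain rule), with continuous inverse $R_{\tau^{-1}}$. Since for any set $U$ one has $p^{-1}(p(U))=\bigcup_{\tau}R_{\tau}(U)$, the preimage $p^{-1}(p(U))$ is open whenever $U$ is, so $p(U)$ is open. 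In particular $\hat{W}_{5}=p(W_{6})$ is open, which is the first assertion; moreover $p^{-1}(\hat{W}_{5})=\bigcup_{\tau}R_{\tau}(W_{6})$ is open and saturated, so the restriction of $p$ to $p^{-1}(\hat{W}_{5})$ is a quotient map onto $\hat{W}_{5}$.

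For the second assertion, the map $p\circ\Xi\colon W_{5}\to\hat{W}_{5}$ is continuous ($\Xi$ is continuous, being bounded linear, and $p$ is continuous), surjective by definition of $\hat{W}_{5}$, and injective because it is the restriction to $W_{5}\subseteq W_{4}$ of the map $p\circ\Xi|_{W_{4}}$, which was already shown injective. Let $\Psi\colon\hat{W}_{5}\to W_{5}$ be its inverse. By the quotient property of $p\colon p^{-1}(\hat{W}_{5})\to\hat{W}_{5}$, continuity of $\Psi$ is equivalent to continuity of $\Psi\circ p$ on $p^{-1}(\hat{W}_{5})=\bigcup_{\tau}R_{\tau}(W_{6})$, and since these sets form an open cover it is enough to check continuity on each $R_{\tau}(W_{6})$. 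If $w=w'\circ\tau$ with $w'\in W_{6}$, then $p(w)=p(w')$ and, with $(a,b,u):=\Xi'(w')\in W_{5}$, Remark \ref{Rk xi} gives $(p\circ\Xi)(a,b,u)=p(w')=p(w)$; hence $\Psi(p(w))=(a,b,u)=\Xi'(w')=\Xi'(R_{\tau^{-1}}(w))$. Thus $\Psi\circ p$ agrees on $R_{\tau}(W_{6})$ with $\Xi'\circ R_{\tau^{-1}}$, a composition of the homeomorphism $R_{\tau^{-1}}\colon R_{\tau}(W_{6})\to W_{6}$ with the continuous map $\Xi'|_{W_{6}}$, and is thus continuous. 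Hence $\Psi$ is continuous and $p\circ\Xi$ is a homeomorphism onto $\hat{W}_{5}$.

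The step I expect to be the crux is the first assertion (openness of $\hat{W}_{5}$): one cannot argue directly that $p\circ\Xi$ is an open map, because $\Xi(W_{5})$ is not open in $\Omega([0,1],G)$ — its ambient image is the proper closed subspace $C^{2}_{*}([0,1],\mathbb{R}^{n})$. The device of replacing $\Xi(W_{5})$ by the genuinely open set $W_{6}=(\Xi')^{-1}(W_{5})$ with the same $p$-image, together with the fact that the reparametrization group acts by homeomorphisms of $\Omega([0,1],G)$, is what makes everything go through; continuity of the inverse then comes essentially for free from the same observations.
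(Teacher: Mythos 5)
Your proof is correct, and its core device is the same one the paper uses: you exploit $\Xi'$ together with Lemma \ref{Lemma section} and Remark \ref{Rk xi} to replace the set $\Xi(W_{5})$, which is never open in $\Omega([0,1],G)$ (its ambient image lies in the proper closed subspace $C^{2}_{*}([0,1],\mathbb{R}^{n})$), by a genuinely open subset of $\Omega([0,1],G)$ with the same image under $p$. Where you diverge is in how the continuity of the inverse is obtained. The paper runs the replacement argument for \emph{every} open $V\subseteq W_{5}$, setting $W'=(\Xi\circ\Xi')^{-1}(\Xi(V))\cap W_{3}$ and showing $p(\Xi(V))=p(W')$, thereby proving that $p\circ\Xi$ is an open map and concluding that a continuous, open, injective map is a homeomorphism onto its image; it simply asserts that $p$ is open. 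You instead run the replacement only once, for $W_{6}=(\Xi')^{-1}(W_{5})$, to get openness of $\hat{W}_{5}$, and you obtain continuity of the inverse $\Psi$ from the universal property of the quotient map restricted to the saturated open set $p^{-1}(\hat{W}_{5})=\bigcup_{\tau}R_{\tau}(W_{6})$, identifying $\Psi\circ p$ on each $R_{\tau}(W_{6})$ with the continuous map $\Xi'\circ R_{\tau^{-1}}$. Two remarks on what each route buys: your argument supplies a proof (via the reparametrization homeomorphisms $R_{\tau}$) of the openness of $p$, which the paper uses without justification, so your write-up is more self-contained on that point; on the other hand, the paper's conclusion is slightly stronger in form, since it establishes that $p\circ\Xi$ is open on all of $W_{5}$ directly, whereas you recover the same information only after assembling the quotient-map argument. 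Both are complete proofs of Theorem \ref{homeo thm}.
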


\begin{proof}
Let us start by showing that $p\circ\Xi:W_{5}\to\hat{\Omega}([0,1],G)$ is an open map. Let $V\subseteq W_{5}$ be an open subset. Then $V':=\Xi(V)\subseteq C^{2}_{*}([0,1],\mathbb{R}^{n})\cap W_{3}$ is an open subset of $C^{2}_{*}([0,1],\mathbb{R}^{n})$. Define $W'=(\Xi\circ\Xi')^{-1}(V')\cap W_{3}\subseteq (\Xi\circ\Xi')^{-1}(W_{3})\cap W_{3}$ which is an open subset of $\Omega([0,1],G)\subseteq C^{2}([0,1],G)$. If $v\in V'$ then $v=\Xi(a,b,u)$ for some $(a,b,u)\in W_{5}$ therefore $\Xi\circ\Xi'(v)=\Xi\circ\Xi'\circ\Xi(a,b,u)=\Xi(a,b,u)=v$ by Lemma \ref{Lemma section} and hence $v\in(\Xi\circ\Xi')^{-1}(V')\cap W_{3}=W'$. This means that $V'\subseteq W'$ and hence $p(V')\subseteq p(W')$. But conversely, given $v\in W'$ by definition $v\in W_{3}$ so $p(v)=p\circ\Xi\circ\Xi'(v)\in p(V')$ because $\Xi\circ\Xi'(v)\in V'$. Therefore $p(V')=p(W')$ and as $p$ is an open map we deduce that $p(V')=p\circ\Xi(V)$ is open, as desired.

Therefore, as $p\circ\Xi:W_{5}\to\hat{\Omega}([0,1],G)$ is continuous, open and injective, it is a homeomorphism onto its image $\hat{W}_{5}$ as we wanted.
\end{proof}

We can use the results of this section to construct an atlas for $\hat{\Omega}([0,1],M)$ with charts of the form $(\hat{W}_{5},(p\circ\Theta\circ\Xi)^{-1})$ centered at $C^{\infty}$ immersions $[f_{0}]$ (with $\hat{W}_{5}$ as in Theorem \ref{homeo thm}), which yields a $C^{0}$ Banach manifold structure modeled by $\mathbb{R}\times\mathbb{R}\times C^{2}([0,1],\mathbb{R}^{n-1})$.

\section{The length functional on the space $\hat{\Omega}(\Gamma,M)$}\label{lengthsgn}

Let us fix a good* weighted multigraph $\Gamma$ (i.e. $\Gamma$ is connected and every vertex $v$ of $\Gamma$ has at least three different incoming edges). We define an equivalence relation $\sim$ in $\Omega(\Gamma,M)$ as follows: $f_{0}\sim f_{1}$ if and only if there exists a homeomorphism $\theta:\Gamma\to\Gamma$ such that
\begin{enumerate}
    \item $\theta(v)=v$ for all $v\in\mathscr{V}$.
    \item $\theta(E)=E$ for all $E\in\mathscr{E}$ and moreover $\theta_{E}:=\theta|_{E}:E\to E$ is a $C^{2}$ diffeomorphism.
    \item $f_{1}=f_{0}\circ\theta$.
\end{enumerate}
We consider the quotient space $\hat{\Omega}(\Gamma,M)=\Omega(\Gamma,M)/\sim$ with the quotient topology. Define the space $\Omega(\mathscr{E},M)=\prod_{E\in\mathscr{E}}\Omega(E,M)$ ($\Omega(E,M)\cong\Omega([0,1],M)$ by identifying $E\cong[0,1]$) being the map $\iota:\Omega(\Gamma,M)\to\Omega(\mathscr{E},M)$ defined as $\iota(f)=(f_{E})_{E\in\mathscr{E}}$ a subspace map. We can also consider an equivalence relation $\sim$ in $\Omega(\mathscr{E},M)$ as follows: $f=(f_{E})_{E\in\mathscr{E}}\sim g=(g_{E})_{E\in\mathscr{E}}$ if there exists $\theta=(\theta_{E})_{E\in\mathscr{E}}\in\prod_{E\in\mathscr{E}}\Diff_{2}(E)$ such that $\theta_{E}$ fixes the vertices of $E$ and $g_{E}=f_{E}\circ\theta_{E}$ for all $E\in\mathscr{E}$. As before, we define the quotient space $\hat{\Omega}(\mathscr{E},M)=\Omega(\mathscr{E},M)/\sim\cong\prod_{E\in\mathscr{E}}\hat{\Omega}(E,M)$ with the quotient topology. It is clear that $\iota$ descends to a subspace map $\hat{\iota}:\hat{\Omega}(\Gamma,M)\to\hat{\Omega}(\mathscr{E},M)$.

Observe that the space $\hat{\Omega}(\mathscr{E},M)$ has a product $C^{0}$ manifold structure modeled on the Banach space $\mathcal{B}=\prod_{E\in\mathscr{E}}\mathbb{R}^{2}\times C^{2}(E,\mathbb{R}^{n-1})$. We proceed to describe the atlas induced by this product structure. Let $f=(f_{E})_{E\in\mathscr{E}}\in\Omega(\mathscr{E},M)$ be such that $f_{E}$ is $C^{\infty}$ for every $E\in\mathscr{E}$. For each $f_{E}$ we do the constructions of the previous section, i.e. we consider:
\begin{enumerate}
    \item A trivialization $\phi_{E}:(-\eta_{E},1+\eta_{E})\times\mathbb{R}^{n-1}\to N_{f_{E}}$ of $N_{f_{E}}|_{(-\eta_{E},1+\eta_{E})}$.
    \item Open sets $f_{E}\in W_{1}(f_{E})\subseteq\Omega(E,M)$, $W_{3}(f_{E})\subseteq W_{2}(f_{E})\subseteq\Omega([0,1],G)$ and $W_{5}(f_{E})\subseteq W_{4}(f_{E})\subseteq (-\eta_{E},\eta_{E})\times(1-\eta_{E},1+\eta_{E})\times C^{2}([0,1],\mathbb{R}^{n-1})$ with the properties described in the previous section. In particular, $p\circ\Theta_{E}\circ\Xi:W_{5}(f_{E})\to \hat{W}_{5}(f_{E})=p\circ\Theta_{E}\circ\Xi(W_{5}(f_{E}))$ is a homeomorphism.
    \item A real number $\delta_{E}>0$ such that $U_{E}:=(-\delta_{E},\delta_{E})\times(1-\delta_{E},1+\delta_{E})\times C^{2}([0,1],\mathbb{R}^{n-1})_{\delta_{E}}\subseteq W_{5}(f_{E})$.
\end{enumerate}
In the previous, we used the notation
\begin{equation*}
    C^{2}([0,1],\mathbb{R}^{n-1})_{\alpha}:=\{u\in C^{2}([0,1],\mathbb{R}^{n-1}):\Vert u\Vert_{2}<\alpha\}.
\end{equation*}
Denote $\hat{U}_{E}=p\circ\Theta_{E}\circ\Xi(U_{E})\subseteq\hat{\Omega}(E,M)$, $U=\prod_{E\in\mathscr{E}}U_{E}\subseteq\mathcal{B}$ and $\hat{U}=\prod_{E\in\mathscr{E}}\hat{U}_{E}\subseteq\hat{\Omega}(\mathscr{E},M)$. From the previous section, we have homeomorphisms $\Lambda_{E}:U_{E}\to\hat{U}_{E}$ defined as $\Lambda_{E}(u)=p(\Theta_{E}(\Xi(u)))$ and they induce a homeomorphism $\Lambda=\prod_{E\in\mathscr{E}}\Lambda_{E}:U\to\hat{U}$. We define $\tilde{\Lambda}_{E}:U_{E}\to\Omega(E,M)$ as $\tilde{\Lambda}_{E}(u)=\Theta_{E}\circ\Xi(u)$ and $\tilde{\Lambda}:U\to\prod_{E\in\mathscr{E}}\Omega(E,M)$ as $\tilde{\Lambda}(u)=(\tilde{\Lambda}(u_{E}))_{E\in\mathscr{E}}$. Denote $\Sigma=\Lambda^{-1}$. Then $(\hat{U},\Sigma)$ is a chart of $\hat{\Omega}(\mathscr{E},M)$ at $f$ for the product structure we are considering, and the collection of all such charts is a $C^{0}$ atlas of $\hat{\Omega}(\mathscr{E},M)$.

\begin{prop}\label{Submanifold prop}
$\hat{\Omega}(\Gamma,M)\subseteq\hat{\Omega}(\mathscr{E},M)$ is an embedded $C^{0}$ Banach submanifold, and its image under any chart $(\hat{U},\Sigma)$ as constructed above is a smooth Banach submanifold of $\mathcal{B}$.
\end{prop}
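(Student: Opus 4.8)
The plan is to characterize the image $\Sigma(\hat\Omega(\Gamma,M)\cap\hat U)$ inside the Banach space $\mathcal{B}=\prod_{E\in\mathscr{E}}\mathbb{R}^2\times C^2(E,\mathbb{R}^{n-1})$ by a finite set of linear equations coming from the compatibility of the edge maps at the vertices, and then observe that this makes it a closed linear (hence smooth) submanifold, with the resulting charts forming a $C^0$ atlas on $\hat\Omega(\Gamma,M)$. Concretely, recall that under the chart $\tilde\Lambda_E(u_E)=\Theta_E\circ\Xi(u_E)$, a tuple $u=(a_E,b_E,w_E)_{E\in\mathscr{E}}\in U$ corresponds to the collection of edge maps $f_E=\mathcal{E}\circ\phi_E\circ v_{a_Eb_Ew_E}$; this assembles to a genuine $\Gamma$-net (an element in the image of $\iota$) precisely when, for each vertex $v\in\mathscr{V}$ and each pair $(E,i),(E',i')$ with $\pi_E(i)=\pi_{E'}(i')=v$, the endpoints agree: $f_E(i)=f_{E'}(i')$ in $M$. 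Since the background immersion $f_0$ already satisfies these, and since in the chart the endpoint $f_E(i)$ depends only on the finitely many real parameters $a_E,b_E$ (the $w_E$ were chosen so that $v_0(t)=(t,0)$ has $w_E\equiv 0$, and the endpoint of $v_{a_Eb_Ew_E}$ is $(a_E,w_E(0))$ or $(b_E,w_E(1))$), one must be slightly more careful: the endpoint condition actually reads off the pair $(a_E,w_E(0))$ resp. $(b_E,w_E(1))$, i.e. it is a finite collection of continuous — in fact, after unwinding the construction, affine — constraints on $u$.

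The key steps, in order: (1) Show that $\iota(\Omega(\Gamma,M))\subseteq\Omega(\mathscr{E},M)$ is cut out, near $f_0$, by the vertex-matching conditions, and that these descend to $\hat\iota$; this is essentially the definition of a $\Gamma$-net together with the observation that the equivalence relation on $\Omega(\Gamma,M)$ is the restriction of the one on $\Omega(\mathscr{E},M)$ (the diffeomorphisms $\theta_E$ fix the vertices in both cases). (2) Express these matching conditions in the chart $\Sigma$: here I would use that for $u_E\in U_E$ close to $0$ the curve $\tilde\Lambda_E(u_E)$ has endpoints depending smoothly (indeed, since $\Xi$ is linear and $\mathcal{E}\circ\phi_E$ is smooth, analytically tractable) on the finite-dimensional data $(a_E,w_E(0))$ and $(b_E,w_E(1))$; the matching condition $f_E(i)=f_{E'}(i')$ then becomes a smooth map from $\mathcal{B}$ (factoring through a finite-dimensional quotient) to $M^{\#\mathscr{V}}$ whose zero set is the image of $\hat\Omega(\Gamma,M)$. (3) Check this map is a submersion at the center point (its differential in the $a_E,b_E,w_E(i)$ directions already surjects onto each tangent space $T_{f_0(v)}M$, because one can move a single endpoint freely), so by the implicit function theorem in Banach spaces the zero set is a smooth (split) Banach submanifold of $\mathcal{B}$. (4) Conclude that $\hat\Omega(\Gamma,M)$ inherits a $C^0$ Banach manifold structure (the transition maps are restrictions of those of $\hat\Omega(\mathscr{E},M)$, hence only $C^0$), embedded in $\hat\Omega(\mathscr{E},M)$, with image a smooth submanifold in each chart.

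The main obstacle I expect is step (2): carefully verifying that in the chosen chart the vertex-endpoint of the reparametrized curve $v_{a_Eb_Ew_E}$, pushed forward by $\mathcal{E}\circ\phi_E$, depends only on finitely many coordinates of $u_E$ and does so smoothly — in particular, that the reparametrization $\theta_v$ built in Proposition \ref{Proposition 1} genuinely fixes the endpoints so that no derivative data of $w_E$ enters the matching condition. Once the matching map is seen to factor through the finite-dimensional evaluation $(a_E,b_E,w_E(0),w_E(1))_{E\in\mathscr{E}}$, surjectivity of its differential (step (3)) is immediate because each vertex has a distinguished incoming edge whose relevant endpoint coordinate can be varied independently, and the remaining bookkeeping — that the kernel splits, being of finite codimension — is automatic. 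The fact that the ambient atlas is only $C^0$ does not interfere, since being a submanifold is a condition checked chart-by-chart and the defining equations there are smooth.
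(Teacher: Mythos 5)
Your proposal is correct in substance and follows essentially the same route as the paper: in the chart, continuity at the vertices is encoded by a finitely-valued smooth matching map depending only on the endpoint data $(a_E,b_E,w_E(0),w_E(1))$, surjectivity of its differential is obtained by moving a single edge endpoint at a time via a bump-function variation, and the implicit function theorem (with the finite-codimension splitting of the kernel) yields a smooth Banach submanifold in each chart and hence the $C^{0}$ submanifold structure globally. The only points to tighten in a write-up are that the constraints are smooth but not affine (the transition $(\mathcal{E}\circ\phi_{E_{v}})^{-1}\circ\mathcal{E}\circ\phi_{E}$ between edge charts is nonlinear), that one should match each edge end at a vertex against a preferred one — giving $m(v)-1$ conditions per vertex rather than a map to $M^{|\mathscr{V}|}$ or the redundant set of all pairwise conditions — and that surjectivity must be verified at every point of the zero set, not only at the center, which your endpoint-moving variation already does verbatim, exactly as in the paper.
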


We introduce the following notation which will be useful in the proof of the proposition and in the rest of the section.

\begin{notation}
Given an edge $E\in\mathscr{E}$ we denote $c_{0}(E)=a_{E}$ and $c_{1}(E)=b_{E}$.
\end{notation}

\begin{definition}
Given a vertex $v\in\mathscr{V}$, we denote $m(v)$ the number of incoming edges of the graph $\Gamma$ at $v$, i.e. the number of pairs $(E,i)\in\mathscr{E}\times\{0,1\}$ such that $\pi_{E}(i)=v$ (as in Definition \ref{incoming edge}, notice that loops at $v$ count twice as incoming edges). For each vertex $v$, we choose a preferred pair $(E_{v},i_{v})\in\mathscr{E}\times\{0,1\}$ such that $\pi_{E_{v}}(i_{v})=v$.
\end{definition}

\begin{rk}
    Notice that $\sum_{v\in\mathscr{V}}m(v)=2|\mathscr{E}|$.
\end{rk}

\begin{proof}[Proof of Proposition \ref{Submanifold prop}]
Let $f_{0}\in \Omega(\Gamma,M)$ be a $\Gamma$-net which is $C^{\infty}$ along the edges. Consider a chart $(\hat{U},\Sigma)$ at $[f_{0}]$ of the product manifold $\hat{\Omega}(\mathscr{E},M)=\prod_{E\in\mathscr{E}}\hat{\Omega}(E,M)$ as constructed before. We are going to describe $\Sigma(\hat{\Omega}(\Gamma,M)\cap\hat{U})$ as an embedded Banach submanifold of $U$.

In order to do that, we need to understand which elements $u=(a_{E},b_{E},u_{E})_{E\in\mathscr{E}}$ verify $\tilde{\Lambda}(u)\in\Omega(\Gamma,M)$. Notice that $(c_{i}(E),u_{E}(i))$ is equal to $(a_{E},u_{E}(0))$ if $i=0$ and to $(b_{E},u_{E}(1))$ if $i=1$. Observe that $u\in U$ represents a map which is continuous at $v$ if and only if given $(E,i)\in\mathscr{E}\times\{0,1\}$ such that $\pi_{E}(i)=v$ we have
\begin{equation*}
    \mathcal{E}\circ\phi_{E}(c_{i}(E),u_{E}(i))=\mathcal{E}\circ\phi_{E_{v}}(c_{i_{v}}(E_{v}),u_{E_{v}}(i_{v})).
\end{equation*}
We know that the map $\mathcal{E}\circ\phi_{E_{v}}$ is a diffeomorphism in a small neighborhood of $(i_{v},0)$. Let us denote its inverse as $(\mathcal{E}\circ\phi_{E_{v}})^{-1}$. Define $C_{v}:U\to(\mathbb{R}^{n})^{m(v)-1}$ as
\begin{equation*}
    C_{v}(u)=\big((\mathcal{E}\circ\phi_{E_{v}})^{-1}\circ \mathcal{E}\circ\phi_{E}(c_{i}(E),u_{E}(i))-(c_{i_{v}}(E_{v}),u_{E_{v}}(i_{v}))\big)_{(E,i)\neq(E_{v},i_{v}):\pi_{E}(i)=v}.
\end{equation*}
Then $u$ represents a map which is continuous at $v$ if and only if $C_{v}(u)=0$. Denote $C:U\to\prod_{v\in\mathscr{V}}(\mathbb{R}^{n})^{m(v)-1}=(\mathbb{R}^{n})^{2|\mathscr{E}|-|\mathscr{V}|}$ the smooth map defined as $C(u)=(C_{v}(u))_{v\in\mathscr{V}}$. From the previous we see that $\Lambda(u)\in\hat{\Omega}(\Gamma,M)$ if and only if $C(u)=0$.

\begin{lemma}\label{Submersion lemma}
$C^{-1}(0)$ is an embedded smooth Banach submanifold of $U$.
\end{lemma}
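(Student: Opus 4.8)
The plan is to show that $0$ is a regular value of the smooth map $C:U\to(\mathbb{R}^{n})^{2|\mathscr{E}|-|\mathscr{V}|}$, so that the claim follows immediately from the regular value theorem for Banach manifolds. Since $C=(C_v)_{v\in\mathscr{V}}$ and the component $C_v$ depends only on the finitely many boundary parameters $(a_E,b_E,u_E(0),u_E(1))$ attached to edges $E$ incident to $v$, and since different vertices involve disjoint sets of such boundary data (each pair $(E,i)$ is incident to exactly one vertex $\pi_E(i)$), the differential $DC_u$ is essentially block-diagonal over $\mathscr{V}$. Hence it suffices to check that for each $v$, the partial differential of $C_v$ is surjective onto $(\mathbb{R}^n)^{m(v)-1}$ at every point of $C^{-1}(0)$.

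First I would fix $v$ and enumerate the incident pairs $(E,i)$ with $\pi_E(i)=v$, distinguishing the preferred one $(E_v,i_v)$ from the remaining $m(v)-1$ pairs. For a non-preferred pair $(E,i)$, the corresponding block of $C_v$ is $\psi_{E,i}(a_E,b_E,u_E(i)) - (c_{i_v}(E_v),u_{E_v}(i_v))$, where $\psi_{E,i}=(\mathcal{E}\circ\phi_{E_v})^{-1}\circ\mathcal{E}\circ\phi_E$ composed with evaluation of the relevant boundary coordinate. I would then vary the ``interior'' value of $u_E$ near the endpoint $i$: since $C^2([0,1],\mathbb{R}^{n-1})$ contains functions taking an arbitrary prescribed value $w\in\mathbb{R}^{n-1}$ at the endpoint $t=i$, and since $(\mathcal{E}\circ\phi_E)$ has full-rank differential, we can independently move the point $\mathcal{E}\circ\phi_E(c_i(E),u_E(i))$ in all $n$ directions of $M$ while leaving the preferred-pair data (and all other non-preferred pairs) fixed — here one uses that $(E,i)\neq(E_v,i_v)$, so the edge $E$ and endpoint $i$ are genuinely distinct from $(E_v,i_v)$ and the variation does not touch $(c_{i_v}(E_v),u_{E_v}(i_v))$. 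Applying the chain rule and the fact that $(\mathcal{E}\circ\phi_{E_v})^{-1}$ is a local diffeomorphism, this variation surjects onto the $\mathbb{R}^n$ factor of $C_v$ indexed by $(E,i)$. Doing this simultaneously and independently for all $m(v)-1$ non-preferred pairs shows $D(C_v)_u$ is surjective.

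Assembling these, $DC_u$ is surjective at every $u\in C^{-1}(0)$, and its kernel splits (the constraint is cut out by finitely many $\mathbb{R}^n$-valued conditions, so the kernel is a closed finite-codimension subspace of $\mathcal{B}$, hence complemented); therefore $C$ is a submersion along $C^{-1}(0)$ and $C^{-1}(0)$ is an embedded smooth Banach submanifold of $U$ of codimension $n(2|\mathscr{E}|-|\mathscr{V}|)$. I expect the only delicate point to be bookkeeping: one must make precise that the boundary coordinates $(a_E,b_E,u_E(0),u_E(1))$ can be prescribed independently inside $U$ (the maps $\Xi$ and the norm on $\mathcal{B}$ make $a_E,b_E$ literally free parameters and $u_E(0),u_E(1)$ values of a free $C^2$ function), and that the reparametrization degree of freedom encoded by $\Xi'$ does not interfere — this is exactly why working in the $(a,b,u)$ coordinates from Section \ref{paths} rather than directly in $\Omega(E,M)$ is essential. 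The genuine analytic content is mild; it is entirely the local diffeomorphism property of $\mathcal{E}\circ\phi_E$ near the relevant points together with the evaluation-at-endpoint map $C^2([0,1],\mathbb{R}^{n-1})\to\mathbb{R}^{n-1}$ being a split surjection.
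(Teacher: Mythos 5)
Your proposal is correct and follows essentially the same route as the paper: exhibit surjectivity of $DC_{u}$ by perturbing, for each non-preferred pair $(E,i)$ at a vertex $v$, the parameters $c_{i}(E)$ and the endpoint value $u_{E}(i)$ (the paper realizes this with a bump function supported near $t=i$, which is the one detail you should make explicit so that the variation does not disturb $u_{E}(1-i)$ and hence the constraints at the other vertex, including the case $E=E_{v}$), then use the local diffeomorphism property of $\mathcal{E}\circ\phi_{E_{v}}$ and the splitting of the finite-codimension kernel to invoke the implicit function theorem. No genuine gap beyond that bookkeeping point.
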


\begin{proof}[Proof of the lemma]
Let $u\in U$ be such that $C(u)=0$. Let $C_{v}^{(E,i)}$ be the component of $C_{v}$ corresponding to $(E,i)$. Denote $\{e_{j}\}_{1\leq j\leq n}$ the canonical basis of $\mathbb{R}^{n}$. Consider the basis $\{e_{j}^{v,(E,i)}:1\leq j\leq n,v\in\mathscr{V},(E,i)\neq(E_{v},i_{v}):\pi_{E}(i)=v\}$ of $\prod_{v\in\mathscr{V}}(\mathbb{R}^{n})^{m(v)-1}$. Given $v\in\mathscr{V}$, $(E,i)\in\mathscr{E}\times\{0,1\}$ such that $\pi_{E}(i)=v$ and $(E,i)\neq(E_{v},i_{v})$, and $1\leq j\leq n$ we will construct a one parameter family $\{u_{s}\}_{s\in(-1,1)}$ in $U$ such that $u_{0}=u$, $\frac{d}{ds}|_{s=0}C_{v}^{(E,i)}(u_{s})=e_{j}$ and $\frac{d}{ds}|_{s=0}C_{v'}^{(E',i')}(u_{s})=0$ for all $(v',(E',i'))\neq(v,(E,i))$. Therefore if $X(v,(E,i),j)=\frac{d}{ds}|_{s=0}u_{s}$ by definition we will have $DC_{u}(X(v,(E,i),j))=e_{j}^{v,(E,i)}$.

The construction is as follows. Let $\rho$ be a bump function on $E$ which is zero outside a small interval $I$ around $i$ and which takes the value $1$ at $i$. Let $a\in\mathbb{R}$ and $w\in\mathbb{R}^{n-1}$ be such that $D((\mathcal{E}\circ\phi_{E_{v}})^{-1}\circ \mathcal{E}\circ\phi_{E})_{(c_{i}(E),u_{E}(i))}(a,w)=e_{j}$. Define
\begin{align*}
u_{s,E}(t)=u_{E}(t)+s\rho(t)w
\end{align*}
and $u_{s,E'}(t)=u_{E'}(t)$ for all $E'\neq E$. Define $c_{s,i}(E)=c_{i}(E)+sa$ and $c_{s,i'}(E')=c_{i'}(E')$ for all $(E',i')\neq (E,i)$. Then $u_{s}=(a_{s,E},b_{s,E},u_{s,E})_{E\in\mathscr{E}}$ is a smooth one parameter family with $u_{0}=u$ and
\begin{equation*}
    \frac{d}{ds}\bigg|_{s=0}C_{v}^{(E,i)}(u_{s}) =D((\mathcal{E}\circ\phi_{E_{v}})^{-1}\circ \mathcal{E}\circ\phi_{E})_{(c_{i}(E),u_{E}(i))}(a,w)=e_{j}.
\end{equation*}
Also as $C_{v'}^{(E',i')}(u_{s})=C_{v'}^{(E',i')}(u)$ for all $(v',(E',i'))\neq (v,(E,i))$ we deduce \linebreak $\frac{d}{ds}|_{s=0}C_{v'}^{(E',i')}(u_{s})=0$ in that case.

Therefore we have a collection of vectors $\{X(v,(E,i),j)\}$ such that \linebreak $DC_{u}(X(v,(E,i),j))=e_{j}^{v,(E,i)}$. Observe that their images under $DC_{u}$ form a basis of $\prod_{v\in\mathscr{V}}(\mathbb{R}^{n})^{m(v)-1}$ and hence $DC_{u}$ is surjective. Denote $S\subseteq\mathcal{B}= \prod_{E\in\mathscr{E}}\mathbb{R}^{2}\times C^{2}(E,\mathbb{R}^{n-1})$ the span of $\{X(v,(E,i),j)\}$. Then $S$ is finite dimensional and hence closed, and by linear algebra $\ker (DC_{u})\oplus S=\mathcal{B}$. Thus $0$ is a regular value of $C$ and we can apply the Implicit Function Theorem to deduce that $\Sigma(\hat{\Omega}(\Gamma,M)\cap\hat{U})=C^{-1}(0)$ is a smooth Banach submanifold of $U$.
\end{proof}

So far we have shown that $\hat{\Omega}(\Gamma,M)\cap\hat{U}$ is a $C^{0}$ embedded Banach submanifold of $\hat{U}$ modelled in the space $\ker(DC_{u_{0}})$ where $\Lambda(u_{0})=[f_{0}]\in\hat{\Omega}(\Gamma,M)\cap\hat{U}$. As $\ker(DC_{u_{0}})$ has codimension $n(\sum_{v\in\mathscr{V}}m(v)-1)=n(2|\mathscr{E}|-|\mathscr{V}|)$ for all possible $u_{0}$, we deduce that the space modelling $\hat{\Omega}(\Gamma,M)$ locally is independent of the chart $(\hat{U},\Sigma)$ (this is because two closed subspaces of a Banach space with the same finite codimension are isomorphic). Therefore, $\hat{\Omega}(\Gamma,M)\subseteq\hat{\Omega}(\mathscr{E},M)$ is a $C^{0}$ Banach submanifold whose image under any chart $(\hat{U},\Sigma)$ as constructed above is a smooth Banach submanifold of $\mathcal{B}$. 
\end{proof}

\begin{definition}
Following the constructions in the previous proof, we will denote $C_{0}=C^{-1}(0)=\Sigma(\hat{\Omega}(\Gamma,M)\cap\hat{U})$, being $C_{0}\subseteq U$ a Banach submanifold.
\end{definition}

\begin{rk}
All the Banach manifolds previously defined are second countable. This is because they can be obtained from $C^{2}([0,1],M)$ and $\mathbb{R}$ by taking products, quotients and topological subspaces. The same holds for the Banach manifold $\mathcal{M}^{k}$ of $C^{k}$ Riemannian metrics on $M$. These facts are consequences of the following: given a compact manifold $M_{1}$, a smooth manifold $M_{2}$ and a natural number $k\geq 1$; the space $C^{k}(M_{1},M_{2})$ with the $C^{k}$ compact-open topology is metrizable and has a countable base, as it is explained in \cite[p.~35]{Hirsch}.
\end{rk}

We will use the $C^{0}$ Banach submanifold structure of $\hat{\Omega}(\Gamma,M)$ in $\hat{\Omega}(\mathscr{E},M)$ and the particular adapted charts under the atlas $\{(\hat{U},\Lambda)\}$ described above to derive the first and second variation formulas for the length functional in local coordinates, following \cite{White}. The formulas that we will obtain will be analogous to those presented in Section \ref{setup}, the advantage of this approach is that it allows us to use techniques from Differential Equations and Functional Analysis to give a geometric structure to the space of stationary geodesic networks for varying Riemannian metrics. Fix $f_{1}\in\Omega(\Gamma,M)$ and a chart $(\hat{U},\Sigma)$ of $\hat{\Omega}(\mathscr{E},M)$ centered at $[f_{1}]$ as constructed above. Let $g$ be a Riemannian metric on $M$. Given $u\in U$ we define $l_{g}(u)=l_{g}(\Lambda(u))=l_{g}(f)$. Then by definition of $g$-length,
\begin{equation*}
l_{g}(u)=\int_{\Gamma}\sqrt{g_{f(t)}(\dot{f}(t),\dot{f}(t))}dt=\sum_{E\in\mathscr{E}}n(E)\int_{E}\sqrt{g_{f(t)}(\dot{f}(t),\dot{f}(t))}dt
\end{equation*}
and by definition of $\Lambda$,
\begin{align*}
    f_{E}(t) & =\mathcal{E}\circ\phi_{E}(a_{E}(1-t)+b_{E}t,u_{E}(t)),\\
    \dot{f}_{E}(t) & = d(\mathcal{E}\circ\phi_{E})_{v_{E}(t)}(b_{E}-a_{E},\dot{u}_{E}(t))
\end{align*}
where $v_{E}(t)=(a_{E}(1-t)+b_{E}t,u_{E}(t))$. Therefore if we define $F^{E}_{g}:[(-\delta_{E},1+\delta_{E})\times\mathbb{R}^{n-1}]\times\mathbb{R}^{n}\to\mathbb{R}$ as
\begin{equation*}
    F_{g}^{E}(v,w)=\sqrt{g_{\mathcal{E}\circ\phi_{E}(v)}(d(\mathcal{E}\circ\phi_{E})_{v}w,d(\mathcal{E}\circ\phi_{E})_{v}w)}
\end{equation*}
and $\rho:E\times(-\delta_{E},\delta_{E})\times(1-\delta_{E},1+\delta_{E})\times\mathbb{R}^{n-1}\times\mathbb{R}^{n-1}\to [(-\delta_{E},1+\delta_{E})\times\mathbb{R}^{n-1}]\times\mathbb{R}^{n}$ as
\begin{equation*}
    \rho(t,a,b,u,w)=((a(1-t)+bt),u),(b-a,w))
\end{equation*}
then if $L_{g}^{E}=F_{g}^{E}\circ\rho$ it is clear that
\begin{equation*}
    l_{g}(u)=\sum_{E\in\mathscr{E}}n(E)\int_{E}L^{E}_{g}(t,a_{E},b_{E},u_{E}(t),\dot{u}_{E}(t))dt.
\end{equation*}
Now if we have a one parameter family $f:(-\varepsilon,\varepsilon)\to\hat{\Omega}(\Gamma,M)\cap\hat{U}$ and consider $u_{s}=\Lambda^{-1}(f(s))=(a_{E}(s),b_{E}(s),u_{s,E})_{E\in\mathscr{E}}$, then
\begin{align*}
    \frac{d}{ds}\bigg|_{s=0}l_{g}(u_{s}) = & \sum_{E\in\mathscr{E}}n(E)\int_{E}\frac{d}{ds}\bigg|_{s=0}L^{E}_{g}(t,a_{E}(s),b_{E}(s),u_{s,E}(t),\dot{u}_{s,E}(t))dt\\
    = & \sum_{E\in\mathscr{E}}n(E)\int_{E}\frac{\partial L_{g}^{E}}{\partial a}(t,a_{E}(0),b_{E}(0),u_{0,E}(t),\dot{u}_{0,E}(t))a_{E}'(0)dt\\
    & +\sum_{E\in\mathscr{E}}n(E)\int_{E}\frac{\partial L_{g}^{E}}{\partial b}(t,a_{E}(0),b_{E}(0),u_{0,E}(t),\dot{u}_{0,E}(t))b_{E}'(0)dt\\
    & +\sum_{E\in\mathscr{E}}n(E)\int_{E}\nabla_{u}L_{g}^{E}(t,a_{E}(0),b_{E}(0),u_{0,E}(t),\dot{u}_{0,E}(t))\cdot\frac{\partial u_{E}}{\partial s}(0,t)dt\\
    & +\sum_{E\in\mathscr{E}}n(E)\int_{E}\nabla_{w}L_{g}^{E}(t,a_{E}(0),b_{E}(0),u_{0,E}(t),\dot{u}_{0,E}(t))\cdot\frac{\partial^{2} u_{E}}{\partial s\partial t}(0,t)dt
\end{align*}
where $(t,a,b,u,w)\in E\times(-\delta_{E},\delta_{E})\times(1-\delta_{E},1+\delta_{E})\times\mathbb{R}^{n-1}\times\mathbb{R}^{n-1}$ are the $5$ variables on which the function $L_{g}^{E}$ depends. Omitting those variables in the notation and integrating by parts we obtain
\begin{align}\label{FVF 1}
\begin{split}
    \frac{d}{ds}\bigg|_{s=0}l_{g}(u_{s})& =\sum_{E\in\mathscr{E}}n(E)\int_{E}\frac{\partial L_{g}^{E}}{\partial a}a_{E}'(0)+\frac{\partial L_{g}^{E}}{\partial b}b_{E}'(0)+(\nabla_{u}L_{g}^{E}-\frac{d}{dt}\nabla_{w}L_{g}^{E})\cdot\frac{\partial u_{E}}{\partial s}(0,t)dt\\
    & +\sum_{E\in\mathscr{E}}n(E)\nabla_{w}L^{E}_{g}(i,a_{E}(0),b_{E}(0),u_{0,E}(i),\dot{u}_{0,E}(i))\cdot\frac{\partial u_{E}}{\partial s}(0,i)\bigg|_{0}^{1}.
\end{split}
\end{align}
Denote $X(t)=(a_{E}'(0),b_{E}'(0),\frac{\partial u_{E}}{\partial s}(0,t))_{E\in\mathscr{E}}\in T_{u_{0}}C_{0}=\ker (DC_{u_{0}})\subseteq\mathcal{B}$. Define $H^{1,E}_{g}:U\to C^{0}(E,\mathbb{R}^{n-1})$ and $A^{0,E}_{g},A^{1,E}_{g}:U\to\mathbb{R}^{n}$ as

\begin{align*}
H^{1,E}_{g}(u)(t) & =n(E)\bigg( \nabla_{u}L^{E}_{g}(t,a_{E},b_{E},u_{E}(t),\dot{u}_{E}(t))-\frac{d}{dt}\bigg[\nabla_{w}L_{g}^{E}(t,a_{E},b_{E},u_{E}(t),\dot{u}_{E}(t))\bigg]\bigg),\\
A^{0,E}_{g}(u) & =n(E)\bigg(\int_{E}\frac{\partial L_{g}^{E}}{\partial a}(t,a_{E},b_{E},u_{E}(t),\dot{u}_{E}(t))dt,-\nabla_{w}L_{g}^{E}(0,a_{E},b_{E},u_{E}(0),\dot{u}_{E}(0))\bigg),\\
A^{1,E}_{g}(u) & = n(E)\bigg(\int_{E}\frac{\partial L_{g}^{E}}{\partial b}(t,a_{E},b_{E},u_{E}(t),\dot{u}_{E}(t))dt,\nabla_{w}L_{g}^{E}(1,a_{E},b_{E},u_{E}(1),\dot{u}_{E}(1))\bigg).
\end{align*}
According to (\ref{FVF 1}), an element $u_{0}\in C_{0}$ represents a stationary geodesic network if and only if for every $X=(c_{0}(E),c_{1}(E),u_{E})_{E\in\mathscr{E}}\in\ker (DC_{u_{0}})$ it holds
\begin{equation}\label{FVF 2}
    \sum_{E\in\mathscr{E}}\int_{E} H^{1,E}_{g}(u_{0})(t)\cdot u_{E}(t)dt+   \sum_{E\in\mathscr{E}}\sum_{i=0}^{1}A^{i,E}_{g}(u_{0})\cdot (c_{i}(E),u_{E}(i))=0.
\end{equation}
Now observe that the condition $DC_{u_{0}}(X)=0$ implies that given $(E,i)\in\mathscr{E}\times\{0,1\}$ with $\pi_{E}(i)=v$, there exists a linear transformation $T_{v}^{(E,i)}(u_{0}):\mathbb{R}^{n}\to\mathbb{R}^{n}$ such that $(c_{i}(E),u_{E}(i))=T_{v}^{(E,i)}(u_{0})(c_{i_{v}}(E_{v}),u_{E_{v}}(i_{v}))$. Moreover, $DC_{u_{0}}(X)=0$ if and only if $(c_{i}(E),u_{E}(i))=T_{v}^{(E,i)}(u_{0})(c_{i_{v}}(E_{v}),u_{E_{v}}(i_{v}))$ for every $(E,i)\in\mathscr{E}\times\{0,1\}$. Thus we can rewrite
\begin{align*}
  & \sum_{E\in\mathscr{E}}\sum_{i=0}^{1}A^{i,E}_{g}(u_{0})\cdot (c_{i}(E),u_{E}(i))\\
  & =\sum_{v\in\mathscr{V}}\sum_{(E,i):\pi_{E}(i)=v}A^{i,E}_{g}(u_{0})\cdot T_{v}^{(E,i)}(u_{0})(c_{i_{v}}(E_{v}),u_{E_{v}}(i_{v}))\\
  & = \sum_{v\in\mathscr{V}}\sum_{(E,i):\pi_{E}(i)=v}T_{v}^{(E,i)}(u_{0})^{*}(A^{i,E}_{g}(u_{0}))\cdot (c_{i_{v}}(E_{v}),u_{E_{v}}(i_{v}))
\end{align*}
where $T_{v}^{(E,i)}(u_{0})^{*}$ denotes the adjoint of the linear operator $T_{v}^{(E,i)}(u_{0}):\mathbb{R}^{n}\to\mathbb{R}^{n}$ with respect of the Euclidean inner product on $\mathbb{R}^{n}$. Define $H^{2,v}_{g}:U\to\mathbb{R}^{n}$ as
\begin{equation*}
    H^{2,v}_{g}(u)=\sum_{(E,i):\pi_{E}(i)=v}T_{v}^{(E,i)}(u)^{*}(A^{i,E}_{g}(u))
\end{equation*}
and $H^{2}_{g}:U\to(\mathbb{R}^{n})^{|\mathscr{V}|}$ as $H^{2}_{g}(u)=(H^{2,v}_{g}(u))_{v\in\mathscr{V}}$. Then (\ref{FVF 2}) can be rewritten as
\begin{equation}\label{FVF 3}
    \sum_{E\in\mathscr{E}}\int_{E} H^{1,E}_{g}(u_{0})(t)\cdot u_{E}(t)dt+\sum_{v\in\mathscr{V}}H^{2,v}_{g}(u_{0})\cdot(c_{i_{v}}(E_{v}),u_{E_{v}}(i_{v}))=0.
\end{equation}
Define $H^{1}_{g}:U\to\prod_{E\in\mathscr{E}}C^{0}(E,\mathbb{R}^{n-1})$ as $H^{1}_{g}(u)=(H^{1,E}_{g}(u_{E}))_{E\in\mathscr{E}}$.

\begin{prop}
Let $u\in U$. Then $\Lambda(u)$ is a stationary geodesic network with respect to $g\in\mathcal{M}^{k}$ if and only if $H^{1}_{g}(u)=H^{2}_{g}(u)=C(u)=0$.
\end{prop}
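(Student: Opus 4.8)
The plan is to unwind the definitions so that the abstract statement reduces to the variational identity (\ref{FVF 3}), and then exploit the fact that the test vectors $X\in\ker DC_{u_0}$ are \emph{exactly} those tuples whose boundary data are determined by the preferred vertex data via the transition maps $T_v^{(E,i)}(u_0)$. First I would recall that by the constructions of Section \ref{lengthsgn}, $\Lambda(u)$ is stationary with respect to $g$ if and only if $\frac{d}{ds}|_{s=0}l_g(u_s)=0$ for every one-parameter family $u_s$ in $C_0=C^{-1}(0)$ through $u_0=u$, equivalently for every $X\in T_{u_0}C_0=\ker DC_{u_0}$. Plugging in (\ref{FVF 1}) and rewriting the boundary terms as in the derivation leading to (\ref{FVF 3}), stationarity is equivalent to
\begin{equation*}
\sum_{E\in\mathscr{E}}\int_E H^{1,E}_g(u_0)(t)\cdot u_E(t)\,dt+\sum_{v\in\mathscr{V}}H^{2,v}_g(u_0)\cdot(c_{i_v}(E_v),u_{E_v}(i_v))=0
\end{equation*}
holding for all $X=(c_0(E),c_1(E),u_E)_{E\in\mathscr{E}}\in\ker DC_{u_0}$.

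The nontrivial direction is that this family of scalar identities forces $H^1_g(u)=0$ and $H^2_g(u)=0$ separately (the condition $C(u)=0$ is of course just the statement that $u\in C_0$, i.e.\ $\Lambda(u)\in\hat\Omega(\Gamma,M)$, which is assumed). To see $H^{1,E}_g(u_0)=0$ for a fixed edge $E$: choose test vectors supported in the interior of $E$, i.e.\ take $u_{E'}=0$ for $E'\neq E$, all $c_i(E')=0$, and $u_E$ any smooth $\mathbb{R}^{n-1}$-valued function vanishing to second order at both endpoints of $E$. Such an $X$ automatically lies in $\ker DC_{u_0}$ since all boundary data vanish, and the vertex sum drops out, leaving $\int_E H^{1,E}_g(u_0)(t)\cdot u_E(t)\,dt=0$ for all such $u_E$; by the fundamental lemma of the calculus of variations (density of compactly supported test functions in $C^0(E,\mathbb{R}^{n-1})$ against the pairing), $H^{1,E}_g(u_0)\equiv0$ on the interior, hence everywhere by continuity. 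Running this over all $E$ gives $H^1_g(u_0)=0$.

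With $H^1_g(u_0)=0$ in hand, the remaining identity is $\sum_{v\in\mathscr{V}}H^{2,v}_g(u_0)\cdot(c_{i_v}(E_v),u_{E_v}(i_v))=0$ for all $X\in\ker DC_{u_0}$. Here I would use that the preferred-vertex data $\big((c_{i_v}(E_v),u_{E_v}(i_v))\big)_{v\in\mathscr{V}}$ can be prescribed \emph{freely} in $(\mathbb{R}^n)^{|\mathscr{V}|}$ by an element of $\ker DC_{u_0}$: indeed, given any target values, one builds a tuple $X$ by letting $u_E$ be a smooth interpolation along each edge realizing the required endpoint values dictated by the transition relations $(c_i(E),u_E(i))=T_v^{(E,i)}(u_0)(c_{i_v}(E_v),u_{E_v}(i_v))$ (using bump functions near the endpoints, exactly as in the proof of Lemma \ref{Submersion lemma}), which by construction lies in $\ker DC_{u_0}$. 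Since the pairing $\sum_v H^{2,v}_g(u_0)\cdot(\,\cdot\,)$ then vanishes on all of $(\mathbb{R}^n)^{|\mathscr{V}|}$, each $H^{2,v}_g(u_0)=0$, so $H^2_g(u_0)=0$. The converse direction is immediate: if $H^1_g(u)=H^2_g(u)=C(u)=0$ then (\ref{FVF 3}) holds trivially for every $X$, so $\Lambda(u)$ is stationary. The step I expect to require the most care is the surjectivity claim for the boundary data onto $(\mathbb{R}^n)^{|\mathscr{V}|}$ — one must check that the interpolating tuple genuinely satisfies \emph{all} the constraints $DC_{u_0}(X)=0$ simultaneously across shared vertices, which is precisely where the characterization ``$DC_{u_0}(X)=0$ iff the transition relations hold at every $(E,i)$'' from the paragraph preceding the statement is invoked; modulo that, everything is a routine application of the fundamental lemma of calculus of variations.
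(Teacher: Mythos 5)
Your proposal is correct and follows essentially the same route as the paper: the trivial direction from (\ref{FVF 3}), then interior-supported test fields (the paper uses an explicit bump function with a sign argument, which is just the fundamental lemma you invoke) to force $H^{1}_{g}(u)=0$, and finally surjectivity of the preferred-vertex boundary data onto $(\mathbb{R}^{n})^{|\mathscr{V}|}$ to force $H^{2}_{g}(u)=0$. Your extra care about realizing arbitrary vertex data within $\ker DC_{u_{0}}$ via the transition relations is exactly the point the paper asserts without detail, so nothing is missing.
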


\begin{proof}
From (\ref{FVF 3}), it is clear that if $u_{0}\in U$ verifies $C(u_{0})=H^{1}_{g}(u_{0})=H^{2}_{g}(u_{0})=0$ then $\Lambda(u_{0})$ is a stationary geodesic network. We want to see that the converse is also true. Assume $\Lambda(u_{0})$ is stationary. Then $\Lambda(u_{0})\in\hat{\Omega}(\Gamma,M)$ and hence $C(u_{0})=0$. We also know that (\ref{FVF 3}) should hold for every $X=(c_{0}(E),c_{1}(E),u_{E})_{E\in\mathscr{E}}\in\ker (DC_{u_{0}})$.

Suppose $H^{1,E}_{g}(u_{0})$ is not identically zero for some $E\in\mathscr{E}$. Let $t_{0}\in \interior(E)$ be such that $H^{1,E}_{g}(u_{0})(t_{0})=w\neq 0$. Let $u_{E}:E\to\mathbb{R}^{n-1}$ be given by $u_{E}(t)=\rho_{E}(t)w$, where $\rho_{E}:E\to\mathbb{R}_{\geq 0}$ is a $C^{2}$ function such that $\rho_{E}(t_{0})=1$ and $\rho_{E}$ is identically zero outside a small interval $I$ around $t_{0}$ where $H^{1,E}_{g}(u_{0})(t)\cdot w>0$. Let $u_{E'}$ be the identically zero function for all $E'\neq E$. Define $X=(0,0,u_{E'})_{E'\in\mathscr{E}}$. Then as $u_{E'}(0)=u_{E'}(1)=a_{E'}=b_{E'}=0$ for all $E'\in\mathscr{E}$, $X\in\ker(DC_{u_{0}})$. If we plug in $X$ in (\ref{FVF 3}), the second term vanishes and the first term is equal to
\begin{equation*}
    \int_{E}H^{1,E}_{g}(u_{0})(t)\cdot u_{E}(t)dt>0
\end{equation*}
which is a contradiction. Therefore, $H^{1}_{g}(u_{0})$ must be identically zero. Thus we know that for all $X\in\ker(DC_{u_{0}})$
\begin{equation*}
    \sum_{v\in\mathscr{V}}H^{2,v}_{g}(u_{0})\cdot(c_{i_{v}}(E_{v}),u_{E_{v}}(i_{v}))=0.
\end{equation*}
As given any vector $(c_{v},u_{v})_{v\in\mathscr{V}}\in(\mathbb{R}^{n})^{|\mathscr{V}|}$ there exists $X\in\ker (DC_{u_{0}})$ such that $(c_{i_{v}}(E_{v}),u_{E_{v}}(i_{v}))=(c_{v},u_{v})$ for all $v\in\mathscr{V}$ we deduce that $H^{2}_{g}(u_{0})=0$.
\end{proof}

Let us define $H:\mathcal{M}\times C_{0}\to \mathcal{Y}=\Big[\prod_{E\in\mathscr{E}}C^{0}(E,\mathbb{R}^{n-1})\Big]\times(\mathbb{R}^{n})^{|\mathscr{V}|}$ as $H(g,u)=(H^{1}_{g}(u),H^{2}_{g}(u))$. Then $H$ is of class $C^{k-2}$ if $\mathcal{M}=\mathcal{M}^{k}$ and the previous proposition implies that given $u\in C_{0}$, $u$ is stationary with respect to $g$ if and only if $H(g,u)=0$. Thus if
\begin{equation*}
    \mathcal{S}^{k}_{0}(\Gamma)=\{(g,f)\in\mathcal{M}^{k}\times\hat{\Omega}(\Gamma,M):f\text{ is stationary with respect to }g\}
\end{equation*}
then for any chart $(\hat{U},\Sigma)$ we have $\Sigma(\mathcal{S}^{k}_{0}(\Gamma)\cap\hat U)=H^{-1}(0)$, hence we want to study $H^{-1}(0)$. For technical reasons that will become evident in the subsequent proofs, we will restrict our attention to embedded $\Gamma$-nets (and consider only good* weighted multigraphs as stated at the beginning of the section). Therefore we define
\begin{equation*}
    \mathcal{S}^{k}(\Gamma)=\{(g,f)\in\mathcal{M}\times\hat{\Omega}^{emb}(\Gamma,M):f\text{ is stationary with respect to }g\}\subseteq\mathcal{S}^{k}_{0}(\Gamma)
\end{equation*}
and we assume that all charts $(\hat{U},\Sigma)$ considered verify $\hat{U}\cap\hat{\Omega}(\Gamma,M)\subseteq\hat{\Omega}^{emb}(\Gamma,M)$. We are going to show that under the previous conditions $0$ is a regular value for $H$. In order to do that, we need to study $DH$ which is associated with the Hessian of the length functional. For that purpose, in the remainder of this section we derive the second variation formula, define the notion of Jacobi field and discuss the relation between these definitions in local coordinates and the intrinsic ones given in Section \ref{setup}.

Let $f:(-\varepsilon,\varepsilon)^{2}\to\hat{\Omega}(\Gamma,M)\cap\hat{U}$ be a two parameter family and denote $u_{xs}=u(x,s)=\Sigma(f(x,s))$ the corresponding two parameter family in $C_{0}$. Assume $u_{00}$ is stationary and denote $X(t)=\frac{\partial u}{\partial s}(0,0,t)=(a_{E}^{X},b_{E}^{X},u_{E}^{X}(t))_{E\in\mathscr{E}}$ and $Y(t)=\frac{\partial u}{\partial x}(0,0,t)$. We know that $X,Y\in T_{u_{0}}C_{0}=\ker(DC_{u_{0}})$. Using (\ref{FVF 3}), we have

\begin{align*}
    \frac{\partial^{2}}{\partial x\partial s}\bigg|_{(0,0)}l_{g}(u(x,s)) = & \frac{d}{dx}\bigg|_{x=0}\Bigg[\sum_{E\in\mathscr{E}}\int_{E}H^{1,E}_{g}(u_{x0})(t)\cdot\frac{\partial u_{E}}{\partial s}(x,0,t)dt\\
    & +\sum_{v\in\mathscr{V}}H^{2,v}_{g}(u_{x0})\cdot (\frac{\partial c_{i_{v}}(E_{v})}{\partial s}(x,0),\frac{\partial u_{E_{v}}}{\partial s}(x,0,i_{v})\Bigg]\\
    = &  \sum_{E\in\mathscr{E}}\int_{E}DH^{1,E}_{g}(u_{00})(Y)(t)\cdot u^{X}_{E}(t) dt\\
    & +\sum_{v\in\mathscr{V}}DH^{2,v}_{g}(u_{00})(Y)\cdot ( c_{i_{v}}^{X}(E_{v}),u^{X}_{E_{v}}(i_{v})).
\end{align*}

The Hessian $\Hess_{u_{0}} l_{g}$ of the length functional at a critical point $u_{0}\in C_{0}$ is the continuous bilinear map $\Hess_{u_{0}}l_{g}:T_{u_{0}}C_{0}\times T_{u_{0}}C_{0}\to\mathbb{R}$ given by $\Hess_{u_{0}} l_{g}(X,Y)=\frac{\partial^{2}}{\partial x\partial s}|_{(0,0)}l_{g}(u(x,s))$ where $u(x,s)$ is a two parameter family in $C_{0}$ such that $X(t)=\frac{\partial u}{\partial s}(0,0,t)$ and $Y(t)=\frac{\partial u}{\partial x}(0,0,t)$. The previous computation shows that the Hessian is well defined and
\begin{align*}
    \Hess l_{g}(u_{0})(X,Y)= & \sum_{E\in\mathscr{E}}\int_{E}DH^{1,E}_{g}(u_{00})(Y)(t)\cdot u^{X}_{E}(t) dt\\
    & +\sum_{v\in\mathscr{V}}DH^{2,v}_{g}(u_{00})(Y)\cdot ( c_{i_{v}}^{X}(E_{v}),u^{X}_{E_{v}}(i_{v})).
\end{align*}
A vector field $X\in T_{u_{0}}C_{0}$ is said to be Jacobi along $u_{0}$ if it is a null vector for $\Hess_{u_{0}}l_{g}$ i.e. if for every $Y\in T_{u_{0}}C_{0}$ we have $\Hess l_{g}(u_{0})(X,Y)=0$. Arguing as we did before with the first variation formula, it is clear that $X\in T_{u_{0}}C_{0}$ is Jacobi along $u_{0}$ if and only if $DH^{1}_{g}(u_{0})(X)=DH^{2}_{g}(u_{0})(X)=0$.

\begin{definition}
Given a critical point $u_{0}$ of the length functional $l_{g}$, we say that $u_{0}$ is nondegenerate if the only Jacobi field $X$ along $u_{0}$ is the zero vector field.
\end{definition}

Now we can study the relation between this notion of Jacobi field and nondegeneracy and that presented in Section \ref{setup}. Denote $W_{E}\subseteq\Omega(E,M)$ the image of $\Theta_{E}:W_{3}(f_{1}|_{E})\to\Omega(E,M)$ and $W=\prod_{E\in\mathscr{E}}W_{E}$. Consider the map $\tilde{\Sigma}:W\to U=\prod_{E\in\mathscr{E}}U_{E}$ defined as $\tilde{\Sigma}(g)=(\Xi'(\Theta_{E}^{-1}(g_{E})))_{E\in\mathscr{E}}$. We can establish a correspondence between vector fields $X\in T_{u_{0}}C_{0}$ along $u_{0}$ and vector fields $J$ along $f_{0}=\tilde{\Lambda}(u_{0})$ by $J=D\Tilde{\Lambda}_{u_{0}}(X)$ and $X=D\tilde{\Sigma}_{f_{0}}(J)$. Notice that we can assume $J$ is at least $C^{3}$ as we are working with Jacobi fields along a geodesic net with respect to a $C^{k}$ metric with $k\geq 3$, and therefore $\tilde{\Sigma}:\mathfrak{X}^{3}(f_{0})\to U\subseteq\prod_{E\in\mathscr{E}}\mathbb{R}^{2}\times C^{2}(E,\mathbb{R}^{n-1})$ is differentiable. As $\tilde{\Sigma}$ is not exactly the inverse of $\tilde{\Lambda}$, one would not expect this correspondence to be bijective. However, we have the following characterization if we restrict to the space of embedded $\Gamma$-nets.

\begin{prop}\label{Prop Jacobi}
Let $f_{0}\in\Omega^{emb}(\Gamma,M)$ where $\Gamma$ is a good* weighted multigraph and let $u_{0}\in C_{0}$ be such that $\tilde{\Lambda}(u_{0})=f_{0}$. Let $X\in T_{u_{0}}C_{0}$ and let $J$ be a vector field along $f_{0}$. Assume $f_{0}$ is stationary with respect to a metric $g$. Then
\begin{enumerate}
    \item If $J=D\tilde{\Lambda}_{u_{0}}(X)$ is parallel along $f_{0}$ then $X=J=0$.
    \item  $D\tilde{\Sigma}_{f_{0}}(D\tilde{\Lambda}_{u_{0}}(X))=X$.
    \item $D\tilde{\Lambda}_{u_{0}}(D\tilde{\Sigma}_{f_{0}}(J))=J+K$ where $K$ is a parallel vector field along $f_{0}$.
    \item If $X$ is Jacobi along $u_{0}$ then $J=D\tilde{\Lambda}_{u_{0}}(X)$ is Jacobi along $f_{0}$.
    \item If $J$ is Jacobi along $f_{0}$ then $X=D\tilde{\Sigma}_{f_{0}}(J)$ is Jacobi along $u_{0}$.
\end{enumerate}
\end{prop}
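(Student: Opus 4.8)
The plan is to reduce everything to two facts about the maps $\tilde\Lambda$, $\tilde\Sigma$ together with the reparametrization invariance of $l_g$. Since $l_g(u)=l_g(\tilde\Lambda(u))$ for $u\in C_0$, any two-parameter family $u(x,s)$ in $C_0$ through $u_0$ produces a family $f(x,s)=\tilde\Lambda(u(x,s))$ of $\Gamma$-nets with $f(0,0)=f_0$, $\partial_s|_0 f=D\tilde\Lambda_{u_0}(X)=:J$ and $\partial_x|_0 f=D\tilde\Lambda_{u_0}(Y)=:J_Y$, so, using that $f_0$ is stationary, one obtains the transfer identity $\Hess_{u_0}l_g(X,Y)=\Hess_{f_0}l_g(D\tilde\Lambda_{u_0}X,\,D\tilde\Lambda_{u_0}Y)$. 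On the other hand, Lemma~\ref{Lemma section} together with $\Theta_E^{-1}\circ\Theta_E=\mathrm{id}$ gives $\tilde\Sigma\circ\tilde\Lambda=\mathrm{id}$ near $u_0$, while Remark~\ref{Rk xi} (and the construction of $\Xi'$) together with $\Theta_E(h\circ\tau)=\Theta_E(h)\circ\tau$ shows that $\tilde\Lambda\circ\tilde\Sigma(g)$ equals, on each edge $E$, a reparametrization $g_E\circ\theta_{g,E}^{-1}$ of $g_E$ fixing the two vertices of $E$ (the one making the $\pi$-coordinate affine in the parameter). Differentiating the first identity at $u_0$ gives $D\tilde\Sigma_{f_0}\circ D\tilde\Lambda_{u_0}=\mathrm{id}$, which is part (2); here, as remarked in the text, one works with $J$ of class $C^3$ so that $\tilde\Sigma$ is differentiable.

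For part (3) I would differentiate $\tilde\Lambda(\tilde\Sigma(g))_E=g_E\circ\theta_{g,E}^{-1}$ along a path $g_\varepsilon$ with $g_0=f_0$, $\dot g_0=J$ (so $\theta_{g_0,E}=\mathrm{id}$), obtaining $D\tilde\Lambda_{u_0}(D\tilde\Sigma_{f_0}(J))=J+K$ with $K_E=-\xi_E\,\dot f_{0,E}$, where $\xi_E(t)=\partial_\varepsilon|_{\varepsilon=0}\theta_{g_\varepsilon,E}(t)$. Because every $\theta_{g_\varepsilon,E}$ fixes $0$ and $1$, $\xi_E(0)=\xi_E(1)=0$; thus $K$ is tangential along each edge and vanishes at the vertices, i.e.\ it is an infinitesimal reparametrization of $f_0$ fixing the vertices, and in particular $K$ lies in the radical of $\Hess_{f_0}l_g$ by reparametrization invariance of $l_g$ (just as parallel fields do, cf.\ the remark in Section~\ref{setup}). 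I would also record at this point that $D\tilde\Sigma_{f_0}$ carries $\mathfrak{X}^3(f_0)$ into $T_{u_0}C_0=\ker DC_{u_0}$, since a vector field along $f_0$ continuous at the vertices is tangent to variations through $\Gamma$-nets and $\tilde\Sigma$ of a $\Gamma$-net lies in $C_0$.

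Parts (4) and (5) are then formal. For (4): if $X$ is Jacobi along $u_0$, the transfer identity gives $\Hess_{f_0}l_g(J,D\tilde\Lambda_{u_0}W)=\Hess_{u_0}l_g(X,W)=0$ for all $W\in T_{u_0}C_0$; by (3), an arbitrary $Z\in\mathfrak{X}^3(f_0)$ satisfies $Z=D\tilde\Lambda_{u_0}(D\tilde\Sigma_{f_0}Z)-K$ with $D\tilde\Sigma_{f_0}Z\in T_{u_0}C_0$ and $K$ in the radical of $\Hess_{f_0}l_g$, whence $\Hess_{f_0}l_g(J,Z)=0$; density of $\mathfrak{X}^3(f_0)$ in $\mathfrak{X}^2(f_0)$ and continuity of the Hessian then give that $J$ is Jacobi along $f_0$. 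For (5): if $J$ is Jacobi along $f_0$, then $D\tilde\Sigma_{f_0}J\in T_{u_0}C_0$ and, by (3), $D\tilde\Lambda_{u_0}(D\tilde\Sigma_{f_0}J)=J+K$ with $K$ in the radical, so $\Hess_{u_0}l_g(D\tilde\Sigma_{f_0}J,W)=\Hess_{f_0}l_g(J+K,\,D\tilde\Lambda_{u_0}W)=0$ for all $W\in T_{u_0}C_0$, i.e.\ $X=D\tilde\Sigma_{f_0}(J)$ is Jacobi along $u_0$.

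The heart of the proposition, and the step I expect to be the main obstacle, is part (1). I would first make $D\tilde\Lambda_{u_0}$ explicit: from $\tilde\Lambda_E(a,b,u)(t)=\mathcal{E}(\phi_E((1-t)a+tb,u(t)))$ one reads off that $J|_E=D\tilde\Lambda_{u_0}(X)|_E$ is an affine function of $t$ times $\dot f_{0,E}$ plus a free $C^2$ section of the normal bundle of $f_{0,E}$; hence the image of $D\tilde\Lambda_{u_0}$ is exactly $\{Z\in\mathfrak{X}^2(f_0):g(Z,\dot f_{0,E})\text{ is affine in }t\text{ on each }E\}$. If in addition $J$ is parallel, then $g(J,\dot f_{0,E})$ is constant on each $E$, so $J|_E=c_E\,\dot f_{0,E}+N_E$ with $c_E\in\mathbb{R}$ and $N_E$ a parallel normal field. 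The remaining — and hardest — part is to use the three hypotheses on $(f_0,\Gamma)$ at once: that $f_0$ is embedded, so distinct edges meet only at the vertices; that $\Gamma$ is good*, so every vertex has at least three incoming edges; and that $f_0$ is stationary, so $V(f_0)(v)=0$ at every vertex. Combining these with continuity of $J$ at the vertices, I would propagate the resulting relations around the connected graph to conclude that all $c_E$ and all $N_E$ vanish, i.e.\ $J\equiv 0$, and then $X=0$ because $D\tilde\Lambda_{u_0}$ is injective, being built edgewise from the linear isomorphism $\Xi$ onto $C^{2}_{*}$ and the diffeomorphisms $\Theta_E$. This propagation is precisely where embeddedness and the good* condition are indispensable: they are what rule out degenerate situations — such as a simple closed geodesic traversed once — which do carry nonzero parallel vector fields.
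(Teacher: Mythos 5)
Parts (2)--(5) of your proposal are essentially sound and close to the paper's own route: the paper also gets (2) from $\tilde\Sigma\circ\tilde\Lambda=\mathrm{id}$ (Lemma \ref{Lemma section}) and (3) by writing $\tilde\Lambda(\tilde\Sigma(f_s))(t)=f(s,\theta(s,t))$, so that $K(t)=\frac{\partial\theta}{\partial s}(0,t)\dot f_0(t)$ --- exactly the tangential, vertex-vanishing field you obtain. Your treatment of (4)--(5) via the transfer identity for the Hessian and the fact that such $K$ lies in the radical is a legitimate variant (the paper instead invokes part (1) inside the proof of (4) to conclude $K=0$), and it is fine modulo one caveat: your description of the image of $D\tilde\Lambda_{u_0}$ as the fields $Z$ with $g(Z,\dot f_{0,E})$ affine in $t$ is only valid for special charts; in general the affine coordinate is the first component of the chart built from $\gamma_0$, the trivialization $\phi_E$ and the chart's center, not a $g$-inner product with $\dot f_0$.

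The genuine gap is part (1), which you yourself flag as the main obstacle and do not prove; and the strategy you sketch cannot be completed. If ``parallel'' is read as covariantly constant edge by edge (your reading), the statement you are trying to propagate around the graph is simply not forced by embeddedness, good* and stationarity: on a flat torus the embedded hexagonal net (a theta multigraph, good*, stationary, embedded) carries nonzero constant vector fields, which are covariantly parallel and continuous, and in the chart centered at $f_0$ with $\gamma_0$ the flat metric such a field equals $D\tilde\Lambda_{u_0}(X)$ for a nonzero $X\in T_{u_0}C_0$ (its tangential coefficient is constant, hence affine, and its normal part is a constant section); so no propagation of the data $c_E,N_E$ can yield $J\equiv 0$. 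The paper's proof of (1) is of a different nature and uses ``parallel'' in the sense in which the proposition is later applied (in Section \ref{banachmanstr} the conclusion drawn from (1) is precisely that there is an interior point with $J(t_0)\notin\langle\dot f_0(t_0)\rangle$), namely that $J$ is everywhere proportional to $\dot f_0$: at each vertex $v$, continuity forces $J(v)$ to be collinear with the inward tangent of every incident edge, and good* plus embeddedness (via uniqueness of geodesics) provide two incident edges with distinct tangent lines, so $J(v)=0$; then in the chart $K_E(0)=K_E(1)=0$ gives $c_E=d_E=0$, and the proportionality $(0,w_E(t))=h(t)(b_E-a_E,\dot u_{0,E}(t))$ with $b_E-a_E\neq 0$ forces $h\equiv 0$, hence $w_E\equiv 0$ and $X=J=0$. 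So the missing step in your write-up is not merely unfinished; as set up it would fail, and the vertex argument just described is the idea you need instead.
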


\begin{proof}
First let us show that if $J=D\tilde{\Lambda}_{u_{0}}(X)$ is parallel along $f_{0}$ then $X=J=0$. Given $v\in\mathscr{V}$, as there are at least three different incoming edges at $v$, the tangent lines to two of them at $v$ should be different because otherwise two of them would have the same inward tangent unit vector and therefore by uniqueness of the solutions of the geodesic equation $f_{0}$ would not be embedded, absurd. Therefore $J(v)=0$ for all $v\in\mathscr{V}$ (as $J(v)$ has to be colinear with all the inward tangent vectors to the edges at $v$). If $X=(c_{E},d_{E},w_{E})_{E\in\mathscr{E}}$, $u_{0}=(a_{E},b_{E},u_{0,E})_{E\in\mathscr{E}}$ and $v_{0,E}=\Xi(a_{E},b_{E},u_{0,E})$, then $K_{E}=D\Xi_{u_{0,E}}(X)$ is parallel along $v_{0,E}$ and $K_{E}(0)=K_{E}(1)=0$ for all $E\in\mathscr{E}$ (as $f_{0}|_{E}(t)=\mathcal{E}\circ\phi_{E}(v_{0,E}(t))$ and $J_{E}=D(\mathcal{E}\circ\phi_{E})_{v_{0,E}}(K_{E})$). But we know that $K_{E}(t)=(c_{E}(1-t)+d_{E}t,w_{E}(t))$ thus $c_{E}=d_{E}=0$. Then there exists a $C^{2}$ function $h:[0,1]\to\mathbb{R}$ such that
\begin{equation*}
    K_{E}(t)=(0,w_{E}(t))=h(t)\dot{v}_{0,E}(t)=h(t)(b_{E}-a_{E},\dot{u}_{0,E}(t)).
\end{equation*}
This implies that $h\equiv 0$ and hence $X=J=0$ as claimed in \textit{(1)}.

Now take $X\in T_{u_{0}}C_{0}$ and a one parameter family $u:(-\varepsilon,\varepsilon)\to U$ such that $\frac{d}{ds}|_{s=0}u_{s}=X$. Then
\begin{equation*}
    D\tilde{\Sigma}_{f_{0}}(D\tilde{\Lambda}_{u_{0}}(X))=\frac{d}{ds}\bigg|_{s=0}\tilde{\Sigma}(\tilde{\Lambda}(u_{s}))=\frac{d}{ds}\bigg|_{s=0}u_{s}=X
\end{equation*}
as $\tilde{\Sigma}\circ\tilde{\Lambda}=id_{U}$ because of Lemma \ref{Lemma section}. This proves \textit{(2)}.

On the other hand, if we take $J\in T_{f_{0}}\Omega(\Gamma,M)$ and a one parameter family $f:(-\varepsilon,\varepsilon)\to U$ such that $\frac{d}{ds}|_{s=0}f_{s}=J$, we have that $\tilde{\Lambda}(\tilde{\Sigma}(f_{s}))$ is a reparametrization of $f_{s}$ for all $s\in(-\varepsilon,\varepsilon)$ because of Remark \ref{Rk xi}. Writing $\tilde{\Lambda}(\tilde{\Sigma}(f_{s}))(t)=f(s,\theta(s,t))$ and using that $\theta(0,t)=t$ for every $t\in\Gamma$ as $\tilde{\Lambda}(u_{0})=f_{0}$, we can see
\begin{equation*}
    D\tilde{\Lambda}_{u_{0}}(D\tilde{\Sigma}_{f_{0}}(J))=\frac{d}{ds}\bigg|_{s=0}\tilde{\Lambda}(\tilde{\Sigma}(f_{s}))=\frac{d}{ds}\bigg|_{s=0}f(s,\theta(s,t))=J(t)+\frac{\partial\theta}{\partial s}(0,t)\dot{f}_{0}(t)
\end{equation*}
so we get \textit{(3)} by defining $K(t)=\frac{\partial\theta}{\partial s}(0,t)\dot{f}_{0}(t)$.

Now let $X$ be a Jacobi field along $u_{0}$ (which is assumed to be stationary). Let $J=D\Tilde{\Lambda}_{u_{0}}(X)$ and let $f(-\varepsilon,\varepsilon)^{2}\to\Omega(\Gamma,M)$ be a two parameter family such that $J(t)=\frac{\partial f}{\partial s}(0,0,t)$. Consider the two parameter family $u(x,s)=\tilde{\Sigma}(f(x,s))$ through $u_{0}$. Then
\begin{equation*}
    D\tilde{\Lambda}_{u_{0}}(\frac{\partial u}{\partial s}(0,0))=D\tilde{\Lambda}_{u_{0}}(D\tilde{\Sigma}_{f_{0}}(J))=J+K
\end{equation*}
for some parallel vector field $K$ along $f_{0}$ because of \textit{(3)} which we have just proved. Therefore $K=D\tilde{\Lambda}_{u_{0}}(\frac{\partial u}{\partial s}(0,0)-X)$ and due to \textit{(1)} it must be $K=0$. Therefore $D\tilde{\Lambda}_{u_{0}}(\frac{\partial u}{\partial s}(0,0))=J=D\tilde{\Lambda}_{u_{0}}(X)$ and hence $X=\frac{\partial u}{\partial s}(0,0)$ because $D\Tilde{\Lambda}_{u_{0}}$ is a monomorphism. As $X$ is Jacobi and $\tilde{\Lambda}(u(x,s))$ is a reparametrization of $f(x,s)$ for all $(x,s)\in(-\varepsilon,\varepsilon)^{2}$, this implies that
\begin{equation*}
    \frac{\partial^{2}}{\partial x\partial s}\bigg|_{(0,0)}l_{g}f(x,s)=\frac{\partial^{2}}{\partial x\partial s}\bigg|_{(0,0)}l_{g}u(x,s)=0
\end{equation*}
and hence $J$ is Jacobi along $f_{0}$. This proves \textit{(4)}.

Let $J$ be a Jacobi field along the stationary geodesic net $f_{0}$. Let $X=D\tilde{\Sigma}_{f_{0}}(J)$ and let $u:(-\varepsilon,\varepsilon)^{2}\to U$ be a two parameter family with $u(0,0)=u_{0}$ and $\frac{\partial u}{\partial s}(0,0)=X$. Then if $f(x,s)=\tilde{\Lambda}(u(x,s))$
\begin{equation*}
    \frac{\partial f}{\partial s}(0,0)=D\tilde{\Lambda}_{u_{0}}(X)=D\tilde{\Lambda}_{u_{0}}(D\tilde{\Sigma}_{f_{0}}(J))=J+K
\end{equation*}
for some parallel vector field $K$ along $f_{0}$ because of \textit{(3)}. As both $J$ and $K$ are Jacobi along $f_{0}$, so is $J+K$ and hence
\begin{equation*}
    \frac{\partial^{2}}{\partial x\partial s}\bigg|_{(0,0)}l_{g}u(x,s)=\frac{\partial^{2}}{\partial x\partial s}\bigg|_{(0,0)}l_{g}f(x,s)=0
\end{equation*}
so we can deduce that $X$ is Jacobi along $u_{0}$, which completes the proof of \textit{(5)}.
\end{proof}

From the proposition we can see that given a critical point $u_{0}$ of the length functional with respect to $g$, $u_{0}$ is nondegenerate in the sense that it does not admit any nonzero Jacobi field if and only if $f_{0}=\tilde{\Lambda}(u_{0})$ is nondegenerate in the sense that every Jacobi field is parallel. Hence the two notions of nondegeneracy are equivalent.

\section{$D_{2}H$ is Fredholm of index $0$}\label{fredholm}
Fix a good* weighted multigraph $\Gamma$. Let us continue working in local coordinates $(\hat{U},\Sigma)$ verifying $\hat{U}\cap\hat{\Omega}(\Gamma,M)\subseteq\hat{\Omega}^{emb}(\Gamma,M)$ as we have been doing previously. The goal of this section is to prove the following result.

\begin{prop}\label{Prop Fredholm}
Given $u_{0}\in C_{0}$ and $g\in\mathcal{M}$ such that $H(g,u_{0})=0$, the operator $D_{2}H_{(g,u_{0})}:T_{u_{0}}C_{0}\to \Big[\prod_{E\in\mathscr{E}}C^{0}(E,\mathbb{R}^{n-1})\Big]\times(\mathbb{R}^{n})^{|\mathscr{V}|}$ is Fredholm of index $0$.
\end{prop}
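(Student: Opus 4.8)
The plan is to decompose $D_{2}H_{(g,u_{0})}$ into a ``principal part'' which is an invertible elliptic operator plus a compact perturbation, following the standard strategy for proving Fredholmness of Jacobi-type operators. Concretely, $D_{2}H = (D_{2}H^{1}, D_{2}H^{2})$ where the first component lands in $\prod_{E}C^{0}(E,\mathbb{R}^{n-1})$ and the second in $(\mathbb{R}^{n})^{|\mathscr{V}|}$. On each edge $E$, the first component $D_{2}H^{1,E}_{g}(u_{0})$ is, by the computations of Section \ref{lengthsgn}, a second order linear differential operator acting on $u^{X}_{E}\in C^{2}(E,\mathbb{R}^{n-1})$ whose leading term is $-\tfrac{n(E)}{l(E)}\ddot{u}^{X}_{E}$ (after identifying, via the chart $\Theta_{E}\circ\Xi$, with the intrinsic operator $A_{E}$ from Section \ref{setup}, whose principal symbol is $-\tfrac{n(E)}{l(E)}\mathrm{Id}$ on the normal bundle); the lower-order terms involve $\dot{u}^{X}_{E}$, $u^{X}_{E}$, and the endpoint data $(a^{X}_{E},b^{X}_{E})$ through the curvature and the $t$-dependent coefficients coming from $L^{E}_{g}$. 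The second component $D_{2}H^{2,v}_{g}(u_{0})$ is, by the vertex computation, a first-order operator: it reads off the $\perp$-components of the covariant derivatives $\dot{u}^{X}_{E}(i)$ at the vertices (composed with the adjoints $T^{(E,i)}_{v}(u_{0})^{*}$), i.e. it is the local-coordinate version of the operators $B_{v}$.

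First I would make precise the identification, via $D\tilde{\Lambda}_{u_{0}}$ and $D\tilde{\Sigma}_{f_{0}}$ from Proposition \ref{Prop Jacobi}, between $D_{2}H_{(g,u_{0})}$ and the pair of intrinsic operators $\big((A_{E})_{E\in\mathscr{E}},(B_{v})_{v\in\mathscr{V}}\big)$ restricted to the model space $T_{u_{0}}C_{0}=\ker DC_{u_{0}}$; the continuity condition encoded by $\ker DC_{u_{0}}$ is exactly the matching of the vertex values of the fields across edges, so the relevant function space is $\{J\in\mathfrak{X}^{2}(f_{0}): J \text{ continuous at vertices}\}$ with its natural $C^{2}$-type norm, and the target is $\prod_{E}C^{0}(E,\mathbb{R}^{n-1})\times(\mathbb{R}^{n})^{|\mathscr{V}|}$. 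Then I would set up the standard ODE-boundary-value framework: on each edge the equation $A_{E}(Y)=\xi_{E}$ with prescribed $\xi_{E}\in C^{0}(E)$ is a linear second order system which, together with the $2n$ real parameters $(a^{X}_{E},b^{X}_{E},Y(0)^{\perp}\text{-type data})$ at the two endpoints — equivalently, prescribing $Y$ and $\dot Y$ at one endpoint — has a unique solution; so the ``interior'' part of $D_{2}H^{1}$, viewed as a map into $C^{0}$ with Dirichlet-type endpoint conditions, is an isomorphism onto its image modulo a finite-dimensional space, by standard linear ODE theory (variation of parameters). The compactness comes from the inclusion $C^{2}(E)\hookrightarrow C^{1}(E)\hookrightarrow C^{0}(E)$ being compact (Arzelà–Ascoli), which lets me absorb the lower-order terms of $A_{E}$ and all of $B_{v}$ (which only sees first derivatives, hence factors through the compact inclusion into $C^{1}$) into a compact operator.

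The main structural step is then: write $D_{2}H_{(g,u_{0})} = P + K$ where $P$ is the ``principal + boundary'' operator $Y\mapsto \big((-\tfrac{n(E)}{l(E)}\ddot{Y}^{\perp}_{E})_{E},\text{(leading vertex terms)}\big)$ together with the bookkeeping of the endpoint parameters, and $K$ collects the curvature terms, the variable coefficients, and the full vertex operators $B_{v}$ minus their leading part — all of which are compact by the Arzelà–Ascoli argument above. For $P$ I must check it is Fredholm of index $0$: this reduces to a finite-dimensional linear-algebra count. A field $Y\in\ker DC_{u_{0}}$ is determined on each edge by $2n$ real parameters (value and covariant derivative at, say, the initial vertex), but the continuity-at-vertices constraint cuts this down; solving $PY=(\xi,w)$ amounts to: integrate the leading ODE on each edge against the source $\xi_{E}$ (always solvable, the obstruction/cokernel being finite-dimensional and coming only from the edge endpoint values being over- or under-determined), then impose the vertex matching and the leading $B_{v}$ equations as a finite linear system on the remaining finitely many free parameters. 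One checks that the number of free parameters equals the number of linear conditions — this is where ``good*'' ($m(v)\geq 3$ at every vertex, giving $\sum_v m(v) = 2|\mathscr E|$ and the right count $n|\mathscr V|$ of vertex equations against $n|\mathscr V|$ of continuity-reduced parameters) enters — so that $P$ has equal finite-dimensional kernel and cokernel, i.e. index $0$. Since Fredholm operators of index $0$ are stable under compact perturbation, $D_{2}H_{(g,u_{0})}=P+K$ is Fredholm of index $0$.

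I expect the main obstacle to be the careful bookkeeping in the index-zero count for $P$: one must correctly keep track of how the $\mathbb{R}^{2}$ endpoint parameters $(a_{E},b_{E})$ per edge interact with the normal-field degrees of freedom and with the $\ker DC_{u_{0}}$ constraint, and verify that the dimension of $\ker P$ equals that of $\operatorname{coker}P$ in full generality (all good* graphs, all stationary $u_{0}$). A clean way to organize this is to count Euler-characteristic-style: each edge contributes a $4n$-dimensional solution space of the boundary-value problem for the leading operator on $N_{f_0}|_E$ before matching (or, including the $(a,b)$ directions, the appropriate enlargement), the continuity constraints remove $n\sum_{v}(m(v)-1)=n(2|\mathscr E|-|\mathscr V|)$ dimensions, and the $B_{v}$ equations impose another $n|\mathscr V|$ (modulo the universal parallel/scaling ambiguities, which by Proposition \ref{Prop Jacobi}(1) and embeddedness do not occur here), and these balance. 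Alternatively — and this is probably the shortest route — I would observe that $D_{2}H^{1,E}$ on each edge with Dirichlet boundary data is a self-adjoint elliptic ODE operator hence Fredholm of index $0$ on $C^{2,\mathrm{Dir}}\to C^{0}$, and then treat the passage from Dirichlet data to the global coupled problem as adding finitely many dimensions to domain and codomain in equal number (the vertex data and the vertex equations), preserving index $0$; the curvature and variable-coefficient terms are a compact perturbation throughout.
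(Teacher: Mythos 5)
Your proposal is correct, and your closing ``shortest route'' is essentially the paper's own argument. The paper restricts to the subspace $\mathcal{F}_{0}=\prod_{E\in\mathscr{E}}\{0\}\times C^{2}_{0}(E,\mathbb{R}^{n-1})$ of variations with no $(a_{E},b_{E})$-components and vanishing endpoint values (your Dirichlet data), identifies $D_{2}H^{1}_{(g,u_{0})}|_{\mathcal{F}_{0}}$ with the elliptic normal Jacobi operator $L$ by writing $D_{2}H^{1}_{(g,u_{0})}=N^{*}\circ L\circ M$, where $M$ and $N$ are isomorphisms built from $D\tilde{\Lambda}_{u_{0}}$ and the $\gamma_{0}$-to-$g$ normal projection of Lemma \ref{Lemma F1}, the identity being extracted by computing the second variation both in chart coordinates and intrinsically, and then invokes Lemma \ref{Lemma F2}: since $\codim_{\mathcal{F}}\mathcal{F}_{0}=n|\mathscr{V}|=\dim\mathcal{G}_{1}$, passing from the Dirichlet problem on the edges to the full coupled problem adds equal finite dimensions to domain and codomain, preserving index $0$. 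Your primary route (an explicit principal part $P$ plus a compact remainder $K$, with a dimension count for $P$) is a legitimate alternative organization: what the paper's conjugation buys is that one never has to isolate the principal symbol in the chart coordinates, where the leading coefficient is a $t$-dependent positive-definite matrix rather than literally $-\tfrac{n(E)}{l(E)}\mathrm{Id}$ (the chart and the trivializations $\phi_{E}$ are built from the auxiliary metric $\gamma_{0}$, not from $g$); what your route buys is that it avoids the adjoint/duality bookkeeping encoded in $N^{*}$, since the entire vertex component is finite rank, hence compact, and the lower-order interior terms are compact by Arzel\`a--Ascoli, exactly as you say.

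Two small corrections to your bookkeeping, neither fatal. First, in the Euler-characteristic count each edge contributes $2n$ free parameters to the homogeneous problem for $P$ ($2(n-1)$ from the second-order system on $\mathbb{R}^{n-1}$ plus the two endpoint parameters $a_{E},b_{E}$), not $4n$; with $2n|\mathscr{E}|$ parameters against $n(2|\mathscr{E}|-|\mathscr{V}|)+n|\mathscr{V}|=2n|\mathscr{E}|$ conditions the balance does hold. Second, the index-zero count does not need good*, embeddedness, or Proposition \ref{Prop Jacobi}(1): Fredholmness only requires the kernel and cokernel of the reduced finite-dimensional system to be equidimensional, and the balance uses only $\sum_{v\in\mathscr{V}}m(v)=2|\mathscr{E}|$; the hypotheses $m(v)\geq 3$ and embeddedness are what rule out nonzero parallel Jacobi fields, and they are needed later, for condition (C) and for the nondegeneracy statement in the Structure Theorem, not here.
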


We will need to introduce some notation and use the subsequent two lemmas. Their proofs are elementary using the definitions and results discussed in \cite{Tsoy}.

\begin{definition}
    Given a Riemannian metric $h$ on $M$, a $\Gamma$-net $f\in\Omega(\Gamma,M)$ and an integer $k\geq 0$, we denote
    \begin{align*}
        \mathfrak{X}^{k}_{0}(f) & =\{Z\in\mathfrak{X}^{k}(f):Z(v)=0\text{ }\forall v\in\mathscr{V}\},\\
        \mathfrak{X}^{k}(f)^{\parallel} & =\{Z\in\mathfrak{X}^{k}(f):Z_{E}\text{ is parallel along }f_{E}\text{ }\forall E\in\mathscr{E}\},\\
        \mathfrak{X}^{k}(f)^{\perp}_{h} & =\prod_{E\in\mathscr{E}}\{Z_{E}\in\mathfrak{X}^{k}(f_{E}):Z_{E}\text{ is normal along }f_{E}\text { with respect to }h\},\\
        \mathfrak{X}_{0}^{k}(f)^{\perp}_{h} & =\{Z\in\mathfrak{X}^{k}(f)^{\perp}_{h}:Z_{E}(0)=Z_{E}(1)=0\text{ }\forall E\in\mathscr{E}\}.
    \end{align*}
    Given $t\in\Gamma$ and $v\in T_{f(t)}M$, we denote $v^{\perp}_{h}$ the projection of $v$ to the orthogonal complement of $\langle\dot{f}(t)\rangle$ with respect to $h$.
\end{definition}

\begin{rk}
    Observe that $\mathfrak{X}^{k}(f)^{\perp}_{h}\not\subseteq\mathfrak{X}^{k}(f)$  but $\mathfrak{X}_{0}^{k}(f)^{\perp}_{h}\subseteq\mathfrak{X}_{0}^{k}(f)$.
\end{rk}

\begin{lemma}\label{Lemma F1}
    Let $h_{1}$ and $h_{2}$ be two Riemannian metrics on $M$ and $k\geq 0$. Then the map $O:\mathfrak{X}^{k}(f)^{\perp}_{h_{1}}\to\mathfrak{X}^{k}(f)^{\perp}_{h_{2}}$, $Z\mapsto ((Z_{E})^{\perp}_{h_{2}})_{E\in\mathscr{E}}$ is a continuous linear isomorphism. The same holds changing $\mathfrak{X}^{k}(f)_{h_{i}}^{\perp}$ by $\mathfrak{X}^{k}_{0}(f)_{h_{i}}^{\perp}$.
\end{lemma}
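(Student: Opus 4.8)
The plan is to reduce the statement to pointwise linear algebra on each edge and then globalize. Since both the source and target of $O$ are products over $\mathscr{E}$ and $O$ acts edgewise, it suffices to treat a single edge $E$ together with the map $Z_E\mapsto (Z_E)^{\perp}_{h_2}$ sending $h_1$-normal $C^k$ fields along $f_E$ to $h_2$-normal ones. Linearity is immediate, and continuity in the $C^k$ norm follows from the explicit formula
\[
(Z_E)^{\perp}_{h_2}(t)=Z_E(t)-\frac{h_2(Z_E(t),\dot f_E(t))}{h_2(\dot f_E(t),\dot f_E(t))}\,\dot f_E(t),
\]
since the scalar coefficient is a $C^k$ function of $t$ built from $Z_E$, $\dot f_E$ and $h_2$ (using that the edges of $f$ are smooth).

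The key point is pointwise invertibility. Fix $t\in E$, put $p=f(t)$ and $L=\langle\dot f(t)\rangle\subseteq T_pM$, and let $N_i\subseteq T_pM$ be the $h_i$-orthogonal complement of $L$, so that $T_pM=L\oplus N_1=L\oplus N_2$. Let $\pi_i\colon T_pM\to N_i$ be the projection with kernel $L$, so that $(Z_E)^{\perp}_{h_i}(t)=\pi_i(Z_E(t))$. I claim that $\pi_2|_{N_1}\colon N_1\to N_2$ is an isomorphism with inverse $\pi_1|_{N_2}$: both spaces have dimension $n-1$, and if $v\in N_1$ satisfies $\pi_2(v)=0$ then $v\in\ker\pi_2=L$, hence $v\in L\cap N_1=\{0\}$, so $\pi_2|_{N_1}$ is injective and therefore bijective; moreover, writing $v\in N_1$ as $v=\pi_2(v)+w$ with $w\in L$ and applying $\pi_1$ (using $\pi_1(v)=v$ and $\pi_1(w)=0$) gives $\pi_1(\pi_2(v))=v$, i.e. $\pi_1\circ\pi_2=\mathrm{id}_{N_1}$, and symmetrically $\pi_2\circ\pi_1=\mathrm{id}_{N_2}$.

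Globalizing over $t\in E$ and over $E\in\mathscr{E}$, this shows that $O'\colon\mathfrak{X}^k(f)^{\perp}_{h_2}\to\mathfrak{X}^k(f)^{\perp}_{h_1}$, $W\mapsto((W_E)^{\perp}_{h_1})_{E\in\mathscr{E}}$, is a continuous linear map satisfying $O'\circ O=\mathrm{id}$ and $O\circ O'=\mathrm{id}$; hence $O$ is a continuous linear isomorphism with continuous inverse $O'$. For the statement with $\mathfrak{X}^k_0$ in place of $\mathfrak{X}^k$, note that $\pi_i$ is linear, so $Z_E(i)=0$ for $i\in\{0,1\}$ forces $(Z_E)^{\perp}_{h_2}(i)=\pi_2(0)=0$; thus $O$ and $O'$ restrict to mutually inverse continuous linear maps between $\mathfrak{X}^k_0(f)^{\perp}_{h_1}$ and $\mathfrak{X}^k_0(f)^{\perp}_{h_2}$, which gives the second assertion.

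I expect no real obstacle here: the only slightly non-formal point is the invertibility of $\pi_2|_{N_1}$, which amounts to the observation that the two orthogonal decompositions of $T_{f(t)}M$ share the same complementary line $\langle\dot f(t)\rangle$; everything else is the projection formula together with the stability of $C^k$ regularity under composition and projection.
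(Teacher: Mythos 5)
Your proof is correct, and it is precisely the elementary argument the paper intends but omits (the paper only remarks that the proofs of Lemmas \ref{Lemma F1} and \ref{Lemma F2} are elementary, deferring to its reference): the pointwise observation that the two orthogonal complements of the common line $\langle\dot f(t)\rangle$ are exchanged by the mutually inverse projections $\pi_1,\pi_2$, globalized edgewise via the explicit projection formula, is the expected route, and the endpoint argument for the $\mathfrak{X}^{k}_{0}$ version is likewise right. The only caveat, which you flag parenthetically, is that well-definedness and $C^{k}$-continuity of the projection require $\dot f_{E}$ and $h_{2}$ to be at least of class $C^{k}$ along each edge; this is automatic in the paper's applications, where $f=f_{0}$ is a stationary geodesic net for a $C^{k}$ metric with $k\geq 3$ (so each edge is a geodesic, hence $C^{k+1}$) and the lemma is invoked only for $C^{0}$ and $C^{2}$ fields.
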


\begin{lemma}\label{Lemma F2}
Let $\mathcal{F},\mathcal{G}$ be Banach spaces and $L:\mathcal{F}\to \mathcal{G}$ be a linear and continuous map. Let $\mathcal{F}_{0},\mathcal{F}_{1}\subseteq \mathcal{F}$ and $\mathcal{G}_{0},\mathcal{G}_{1}\subseteq \mathcal{G}$ be closed subspaces with $\dim(\mathcal{F}_{1})=\dim(\mathcal{G}_{1})<\infty$ such that $\mathcal{F}_{0}\bigoplus\mathcal{F}_{1}=\mathcal{F}$ and $\mathcal{G}_{0}\bigoplus\mathcal{G}_{1}=\mathcal{G}$. Let $L_{ij}:\mathcal{F}_{i}\to\mathcal{G}_{j}$, $i,j\in\{0,1\}$ be such that $L(f_{0},f_{1})=(L_{00}(f_{0})+L_{10}(f_{1}),L_{01}(f_{0})+L_{11}(f_{1}))$ for each $f_{0}\in\mathcal{F}_{0}$ and $f_{1}\in\mathcal{F}_{1}$. Assume $L_{11}:\mathcal{F}_{0}\to \mathcal{G}_{0}$ is Fredholm of index $0$. Then $L:\mathcal{F}\to \mathcal{G}$ is Fredholm of index $0$.
\end{lemma}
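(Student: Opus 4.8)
The plan is to realize $L$ as a finite-rank perturbation of a block-diagonal operator whose two diagonal blocks are individually Fredholm of index $0$, and then invoke the standard facts that the index is additive under direct sums and invariant under compact perturbations. Throughout I read the hypothesis as asserting that the diagonal block $L_{00}\colon\mathcal{F}_{0}\to\mathcal{G}_{0}$ (the operator whose domain and codomain are $\mathcal{F}_{0}$ and $\mathcal{G}_{0}$, the only block that is not automatically of finite rank) is Fredholm of index $0$.

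First I would write $L$ in block form relative to $\mathcal{F}=\mathcal{F}_{0}\oplus\mathcal{F}_{1}$ and $\mathcal{G}=\mathcal{G}_{0}\oplus\mathcal{G}_{1}$, so that $L(f_{0},f_{1})=(L_{00}(f_{0})+L_{10}(f_{1}),\,L_{01}(f_{0})+L_{11}(f_{1}))$. The structural observation driving the argument is that three of the four blocks are automatically of finite rank: $L_{10}\colon\mathcal{F}_{1}\to\mathcal{G}_{0}$ has finite-dimensional domain, while $L_{01}\colon\mathcal{F}_{0}\to\mathcal{G}_{1}$ and $L_{11}\colon\mathcal{F}_{1}\to\mathcal{G}_{1}$ have finite-dimensional codomain. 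Each therefore factors through a finite-dimensional space and is a finite-rank, hence compact, operator.

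Next I would introduce the block-diagonal operator $\tilde{L}:=L_{00}\oplus 0\colon\mathcal{F}_{0}\oplus\mathcal{F}_{1}\to\mathcal{G}_{0}\oplus\mathcal{G}_{1}$, where $0$ is the zero map $\mathcal{F}_{1}\to\mathcal{G}_{1}$. By hypothesis $L_{00}$ is Fredholm of index $0$, and the zero map between the finite-dimensional spaces $\mathcal{F}_{1},\mathcal{G}_{1}$ has $\ker=\mathcal{F}_{1}$ and $\cok=\mathcal{G}_{1}$, so it is Fredholm with $\ind=\dim\mathcal{F}_{1}-\dim\mathcal{G}_{1}=0$ since these dimensions coincide by assumption. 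By additivity of the index under direct sums, $\tilde{L}$ is Fredholm of index $0+0=0$.

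Finally, the difference $N:=L-\tilde{L}$ sends $(f_{0},f_{1})$ to $(L_{10}(f_{1}),\,L_{01}(f_{0})+L_{11}(f_{1}))$, whose image is contained in $\image(L_{10})\oplus\mathcal{G}_{1}$, a finite-dimensional subspace of $\mathcal{G}$; thus $N$ is finite rank, hence compact, and $L=\tilde{L}+N$. Invoking the invariance of the Fredholm property and of the index under compact perturbations, I conclude that $L$ is Fredholm of index $0$. I do not anticipate a genuine obstacle: the only points requiring care are the correct bookkeeping of the block indices (in particular identifying the intended Fredholm block as $L_{00}$) and verifying that all three off-diagonal and corner blocks are finite rank, so that their combination $N$ is compact and the perturbation theorems apply.
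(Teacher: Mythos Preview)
Your proof is correct. The paper does not actually give a proof of this lemma; it merely states that the proof is ``elementary using the definitions and results discussed in \cite{Tsoy}.'' Your argument---realizing $L$ as a compact (indeed finite-rank) perturbation of the block-diagonal operator $L_{00}\oplus 0$, computing the index of the latter by additivity, and then invoking invariance of the index under compact perturbations---is exactly the standard elementary route one would expect. You also correctly identified the typo in the statement (the hypothesis should read $L_{00}\colon\mathcal{F}_{0}\to\mathcal{G}_{0}$ is Fredholm of index $0$, not $L_{11}$), which is confirmed by how the lemma is applied in the proof of Proposition~\ref{Prop Fredholm}.
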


\begin{proof}[Proof of Proposition \ref{Prop Fredholm}]
Let $g\in\mathcal{M}$ and $u_{0}\in C_{0}$ verifying $H(g,u_{0})=0$. Consider the spaces

\begin{align*}
    C^{2}_{0}(E,\mathbb{R}^{n-1}) & =\{u\in C^{2}(E,\mathbb{R}^{n-1}):u(0)=u(1)=0\},\\
    \mathcal{F} & = T_{u_{0}}C_{0},\\
    \mathcal{F}_{0} & =\prod_{E\in\mathscr{E}}\{0\}\times C^{2}_{0}(E,\mathbb{R}^{n-1}),\\
    \mathcal{G} & =\big[\prod_{E\in\mathscr{E}}C^{0}(E,\mathbb{R}^{n-1})\big]\times(\mathbb{R}^{n})^{|\mathscr{V}|},\\
    \mathcal{G}_{0} & =\big[\prod_{E\in\mathscr{E}}C^{0}(E,\mathbb{R}^{n-1}) \big]\times\{0\},\\
    \mathcal{G}_{1} & =\{0\}\times(\mathbb{R}^{n})^{|\mathscr{V}|}.
\end{align*}
Observe that as $\mathcal{F}_{0}\subseteq\mathcal{F}\subseteq\mathcal{B}$,
\begin{equation*}
    \codim_{\mathcal{F}}(\mathcal{F}_{0})=\codim_{\mathcal{B}}(\mathcal{F}_{0})-\codim_{\mathcal{B}}(\mathcal{F})=2n|\mathscr{E}|-n(2|\mathscr{E}|-|\mathscr{V}|)=n|\mathscr{V}|
\end{equation*}
and hence there exists a subspace $\mathcal{F}_{1}\subseteq\mathcal{F}$ of dimension $n|\mathscr{V}|$ such that $\mathcal{F}_{0}\bigoplus\mathcal{F}_{1}=\mathcal{F}.$ As $\mathcal{G}_{0}\bigoplus\mathcal{G}_{1}=\mathcal{G}$ and $\dim(\mathcal{G}_{1})=n|\mathscr{V}|=\dim(\mathcal{F}_{1})$, by Lemma \ref{Lemma F2} it suffices to show that $D_{2}H^{1}_{(g,u_{0})}:\mathcal{F}_{0}\to \mathcal{G}_{0}$ is Fredholm of index $0$.

Let $X,Y\in \mathcal{F}_{0}$ with $X=(0,0,u_{E})_{E\in\mathscr{E}}$ and $Y=(0,0,v_{E})_{E\in\mathscr{E}}$, $u_{E},v_{E}\in C^{2}_{0}(E,\mathbb{R}^{n-1})$ $\forall E\in\mathscr{E}$. By the second variation formula, we have
\begin{equation}\label{FredEq1}
    \frac{\partial^{2}}{\partial x\partial s}\bigg|_{(0,0)}l_{g}(u_{xs})=\sum_{E\in\mathscr{E}}\int_{E}D_{2}H^{1,E}_{(g,u_{0})}(Y)(t)\cdot u_{E}(t)dt
\end{equation}
where $u_{x,s}=u_{0}+sX+xY$. On the other hand, if $f(x,s)=\Tilde{\Lambda}(u_{xs})$, then as the vector fields $\Tilde{X}=D\Tilde{\Lambda}_{u_{0}}(X)$ and $\Tilde{Y}=D\Tilde{\Lambda}_{u_{0}}(Y)$ vanish at the vertices of $\Gamma$ we have
\begin{equation}\label{FredEq2}
     \frac{\partial^{2}}{\partial x\partial s}\bigg|_{(0,0)}l_{g}(f_{xs})=\sum_{E\in\mathscr{E}}-\frac{n(E)}{l(E)}\int_{E}g(\ddot{\Tilde{Y}}^{\perp}_{g}+R(\dot{f}_{0},\Tilde{Y}^{\perp}_{g})\dot{f}_{0},\Tilde{X}(t)^{\perp}_{g})dt
\end{equation}
being $f_{0}=f_{00}$. Notice that as $f_{0}=\Tilde{\Lambda}(u_{0})$ is embedded with domain a good* weighted multigraph, $\mathfrak{X}^{2}(f_{0})^{\parallel}\subseteq\mathfrak{X}^{2}_{0}(f_{0})$. Observe that $D\tilde{\Lambda}_{u_{0}}(\mathcal{F}_{0})=\mathfrak{X}^{2}_{0}(f_{0})^{\perp}_{\gamma_{0}}$, where $\gamma_{0}$ is the the auxiliary Riemannian metric used to construct the atlas of $\hat{\Omega}(\Gamma,M)$ in Section \ref{lengthsgn}. Then as $D\Tilde{\Lambda}_{u_{0}}$ is a monomorphism, by the Open Mapping Theorem it is an isomorphism between the spaces $\mathcal{F}_{0}$ and $\mathfrak{X}^{2}_{0}(f_{0})^{\perp}_{\gamma_{0}}$. Similarly, $D\Tilde{\Lambda}_{u_{0}}:\mathcal{G}_{0}\to\mathfrak{X}^{0}(f_{0})^{\perp}_{\gamma_{0}}$ is a linear isomorphism. Let us define the operators $M:\mathcal{F}_{0}\to\mathfrak{X}^{2}_{0}(f_{0})^{\perp}_{g}$, $M=O\circ D\Tilde{\Lambda}_{u_{0}}$ and $N:\mathcal{G}_{0}\to\mathfrak{X}^{0}(f_{0})^{\perp}_{g}$, $N=O\circ D\Tilde{\Lambda}_{u_{0}}$ ($O$ is as in Lemma \ref{Lemma F1} with respect to the metrics $h_{1}=\gamma_{0}$ and $h_{2}=g$). By Lemma \ref{Lemma F1}, $M$ and $N$ are isomorphisms.

Let $L:\mathfrak{X}^{2}(f_{0})\to\mathfrak{X}^{0}(f_{0})^{\perp}$ be the operator
\begin{equation*}
L(Z)=-(\frac{n(E)}{l(E)}\bigg[(\ddot{Z}_{E})_{g}^{\perp}+R(\dot{f}_{0},(Z_{E})_{g}^{\perp})\dot{f}_{0}\bigg])_{E\in\mathscr{E}}.
\end{equation*}
It holds $L(O(Z))=L(Z)$ for every $Z\in\mathfrak{X}^{2}(f_{0})$ (as $L$ vanishes over parallel vector fields). In particular, $L(\Tilde{Y})=L(O(\Tilde{Y}))=L(M(Y))$. Also $L:\mathfrak{X}^{2}_{0}(f_{0})^{\perp}_{g}\to\mathfrak{X}^{0}(f_{0})^{\perp}_{g}$ is Fredholm of index $0$, as so are the Jacobi operators $L_{E}:\mathfrak{X}^{2}_{0}(f_{0,E})^{\perp}_{g}\to\mathfrak{X}^{0}(f_{0,E})^{\perp}_{g}$, $L_{E}(Z)=\ddot{Z}_{g}+R(\dot{f}_{0},Z)\dot{f}_{0}$ (because they are elliptic). Therefore, by (\ref{FredEq1}), (\ref{FredEq2}) and the fact that $l_{g}(f_{xs})=l_{g}(u_{xs})$ we get
\begin{equation}\label{FredEq3}
    \sum_{E\in\mathscr{E}}\int_{E}D_{2}H^{1,E}_{(g,u_{0})}(Y)(t)\cdot u_{E}(t)dt=\sum_{E\in\mathscr{E}}\int_{E}g(L(\Tilde{Y})(t),\Tilde{X}(t)^{\perp}_{g})dt.
\end{equation}
Let $\langle,\rangle_{1}$ and $\langle,\rangle_{2}$ denote the inner products in $\mathfrak{X}^{0}(f_{0})^{\perp}_{g}$ and $\mathcal{G}_{0}$ respectively given by
\begin{align*}
    \langle Z_{1},Z_{2}\rangle_{1} & =\sum_{E\in\mathscr{E}}\int_{E}g(Z_{1}(t),Z_{2}(t))dt,\\
    \langle X_{1},X_{2}\rangle_{2} & =\sum_{E\in\mathscr{E}}\int_{E}u_{1}(t)\cdot u_{2}(t)dt
\end{align*}
where $X_{i}=(0,0,u^{i}_{E})_{E\in\mathscr{E}}$, $i=1,2$. Let $N^{*}$ denote the adjoint of $N$ with respect to these inner products, i.e. the map $N^{*}:\mathfrak{X}^{0}(f_{0})^{\perp}_{g}\to\mathcal{G}_{0}$ such that for every $X\in\mathcal{G}_{0}$ and every $Z\in\mathfrak{X}^{0}(f_{0})^{\perp}_{g}$ it holds
\begin{equation*}
    \langle Z,N(X)\rangle_{1}=\langle N^{*}(Z),X\rangle_{2}.
\end{equation*}
Then
\begin{align*}
    \sum_{E\in\mathscr{E}}\int_{E}g(L(\Tilde{Y})(t),\Tilde{X}(t)^{\perp})dt & = \langle L(\Tilde{Y}),O(\Tilde{X})\rangle_{1}\\
     & =\langle L(M(Y)),N(X)\rangle_{1}\\
    & =\langle N^{*}(L(M(Y))),X\rangle_{2}
\end{align*}
and by (\ref{FredEq3}) we deduce that
\begin{equation*}
    \langle D_{2}H^{1}_{(g,u_{0})}(Y),X\rangle_{2}=\langle N^{*}(L(M(Y))),X\rangle_{2}
\end{equation*}
holds for every $X,Y\in\mathcal{F}_{0}$. Hence, $D_{2}H^{1}_{(g,u_{0})}=N^{*}\circ L\circ M$ and as both $M,N^{*}$ are linear isomorphisms this yields that $D_{2}H^{1}_{(g,u_{0})}:\mathcal{F}_{0}\to\mathcal{G}_{0}$ is Fredholm of index $0$, as desired.
\end{proof}

We finish this section by proving the following lemma which will be used in \cite{Liokumovich}.

\begin{lemma} \label{nondegenerate}
 Let $\Gamma$ be a good* weighted multigraph and  $f_{0}: \Gamma \rightarrow M$ be an embedded non-degenerate stationary geodesic net with respect to a $C^{k}$ metric $g_{0}$, $k\geq 3$. Then there exists a neighborhood $W$ of $g_{0}$ in $\mathcal{M}^{k}$ and a differentiable map $\Delta:W\to\Omega(\Gamma,M)$ such that $\Delta(g)$ is a non-degenerate stationary geodesic net with respect to $g$ for every $g\in W$.
\end{lemma}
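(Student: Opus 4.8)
The plan is to run, in local coordinates, the local version of the argument behind the Structure Theorem \ref{structurethm}: translate the non-degeneracy of $f_{0}$ into the invertibility of the vertical derivative of the mean curvature map, and then apply the implicit function theorem and continuity.

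First I would fix, as in Section \ref{lengthsgn}, a chart $(\hat{U},\Sigma)$ of $\hat{\Omega}(\mathscr{E},M)$ centered at a $C^{\infty}$ net, with $[f_{0}]\in\hat{U}$ and $\hat{U}\cap\hat{\Omega}(\Gamma,M)\subseteq\hat{\Omega}^{emb}(\Gamma,M)$; such a chart exists because $f_{0}$ is embedded and embeddedness is an open condition. Let $u_{0}\in C_{0}$ be such that $\tilde{\Lambda}(u_{0})=f_{0}$, so that $H(g_{0},u_{0})=0$ for the $C^{k-2}$ map $H\colon\mathcal{M}^{k}\times C_{0}\to\mathcal{Y}$ of Section \ref{lengthsgn}. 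By the characterization of Jacobi fields, the kernel of $D_{2}H_{(g_{0},u_{0})}\colon T_{u_{0}}C_{0}\to\mathcal{Y}$ is exactly the space of Jacobi fields along $u_{0}$, and by the equivalence of the two notions of non-degeneracy proved at the end of Section \ref{lengthsgn}, the hypothesis that $f_{0}$ is non-degenerate with respect to $g_{0}$ says precisely that this kernel is trivial. Since $D_{2}H_{(g_{0},u_{0})}$ is Fredholm of index $0$ by Proposition \ref{Prop Fredholm}, triviality of the kernel forces triviality of the cokernel, so $D_{2}H_{(g_{0},u_{0})}$ is a Banach space isomorphism.

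Next I would apply the implicit function theorem, which is available since $H$ is $C^{k-2}$ with $k-2\geq 1$: there are an open neighborhood $W$ of $g_{0}$ in $\mathcal{M}^{k}$ and a $C^{k-2}$ map $u\colon W\to C_{0}$ with $u(g_{0})=u_{0}$ and $H(g,u(g))=0$ for all $g\in W$. I then set $\Delta(g):=\tilde{\Lambda}(u(g))$. This is a genuine element of $\Omega(\Gamma,M)$ because $u(g)\in C_{0}=C^{-1}(0)$ makes $\tilde{\Lambda}(u(g))$ continuous at the vertices; it depends differentiably on $g$ because $\tilde{\Lambda}$ is a smooth map of Banach manifolds (Section \ref{paths}) and $u(\cdot)$ is $C^{k-2}$; and $H(g,u(g))=0$ says exactly that $\Delta(g)$ is a stationary geodesic net with respect to $g$. (It is moreover embedded, since $\Lambda(u(g))\in\hat{U}\cap\hat{\Omega}(\Gamma,M)\subseteq\hat{\Omega}^{emb}(\Gamma,M)$.)

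Finally, for non-degeneracy, since $H$ is $C^{1}$ the assignment $(g,u)\mapsto D_{2}H_{(g,u)}$ is continuous and the invertible operators form an open set, so --- after a local trivialization of $TC_{0}$ near $u_{0}$ to compare the varying domains --- I may shrink $W$ so that $D_{2}H_{(g,u(g))}$ remains an isomorphism, and in particular has trivial kernel, for every $g\in W$; equivalently $u(g)$, hence $\Delta(g)$, is non-degenerate with respect to $g$. The step that most deserves care is exactly this last one, together with the bookkeeping that $\hat{\Omega}(\Gamma,M)$ itself carries only a $C^{0}$ structure whereas $C_{0}\subseteq U$ is an honest smooth Banach submanifold; it is the latter, together with smoothness of $\tilde{\Lambda}$, that makes $u(\cdot)$ and hence $\Delta$ genuinely differentiable rather than merely continuous.
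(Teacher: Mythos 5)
Your proposal is correct and follows essentially the same route as the paper: pass to the chart of Section \ref{lengthsgn}, use Proposition \ref{Prop Jacobi} together with the index-$0$ Fredholm property of $D_{2}H_{(g_{0},u_{0})}$ to upgrade non-degeneracy of $f_{0}$ to invertibility of $D_{2}H_{(g_{0},u_{0})}$, apply the Implicit Function Theorem to $H$, and conclude non-degeneracy of the nearby solutions by continuity of the second variation. Your extra remarks (trivial kernel implies trivial cokernel, $\Delta=\tilde{\Lambda}\circ u$, and the openness-of-isomorphisms justification of the final shrinking of $W$) only make explicit steps the paper leaves implicit.
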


\begin{proof}
Let $f_{0}:\Gamma\to M$ be as in the statement of the lemma. Take a chart $(\hat{U},\Sigma)$ of $\hat{\Omega}(\mathscr{E},M)$ containing $[f_{0}]$ as constructed in the previous section. Then we know that there exists a differentiable map $H:\mathcal{M}\times C_{0}\to\mathcal{Y}$ such that $[f]\in \hat{U}\cap\hat{\Omega}(\Gamma,M)$ is stationary with respect to $g$ if and only if $H(g,\Sigma([f]))=0$. As $f_{0}$ is nondegenerate and embedded, $u_{0}=\Sigma([f_{0}])$ is nondegenerate and hence we know that $D_{2}H(g_{0},u_{0})$ is an isomorphism (here we are using that $D_{2}H(g_{0},u_{0})$ is Fredholm of index $0$ and Proposition \ref{Prop Jacobi}). Applying the Implicit Function Theorem to the map $H$ at the point $(g_{0},u_{0})$, we get that there is a neighborhood $W$ of $g_{0}$ in $\mathcal{M}^{k}$ and a differentiable map $\Delta:W\to\Omega(\Gamma,M)$ with $\Delta(g_{0})=f_{0}$ such that $\Delta(g)$ is stationary with respect to $g$ for all $g\in W$. By continuity of the Hessian with respect to the metric, shrinking $W$ if necessary we can guarantee that $\Delta(g)$ is nondegenerate.
\end{proof}

\section{Proofs of the Structure Theorem and of the Bumpy Metrics Theorem in the case $k<\infty$}\label{banachmanstr}

Let $\Gamma$ be a good* weighted multigraph. Recall that
\begin{equation*}
    \mathcal{S}^{k}(\Gamma)=\{(g,f)\in\mathcal{M}^{k}\times\hat{\Omega}^{emb}(\Gamma,M):f\text{ is stationary with respect to }g\}.
\end{equation*}


We are going to prove that given a chart $(\hat{U},\Sigma)$ as described in the previous sections, $id\times\Sigma(\mathcal{S}^{k}(\Gamma)\cap \mathcal{M}^{k}\times\hat{U})$ is a $C^{k-2}$ Banach submanifold of $\mathcal{M}^{k}\times C_{0}$. Then we will use this to construct an atlas for $\mathcal{S}^{k}(\Gamma)$. We know that
\begin{equation*}
    id\times\Sigma(\mathcal{S}^{k}(\Gamma)\cap(\mathcal{M}^{k}\times\hat{U}))=\{(g,u)\in\mathcal{M}^{k}\times C_{0}:H(g,u)=0\}=H^{-1}(0)
\end{equation*}
so our strategy will be to prove that $0$ is a regular value of $H$. For that purpose we will need \cite[Theorem~1.2]{White}, which we state below.

\begin{thm}\label{MST}
Let $\mathcal{M}$, $X$ and $Y$ be Banach spaces and $\mathcal{H}$ be a Hilbert Space with $X\subseteq Y\subseteq\mathcal{H}$. Let $L:\mathcal{M}\times X\to\mathbb{R}$ be a $C^{2}$ function and suppose there is a $C^{q}$ map $H:\mathcal{M}\times X\to Y$ such that
$$\frac{d}{dt}\bigg|_{t=0}L(g,u+tv)=\langle H(g,u),v\rangle$$
for all $g\in\mathcal{M}$ and all $u,v\in X$. Suppose also that $D_{2}H(g_{0},u_{0}):X\to Y$ is a Fredholm map of Fredholm index $0$ and that for every nonzero $\kappa\in K=\ker(D_{2}H(g_{0},u_{0}))$ there exists a one parameter family $g(s)\in\mathcal{M}$ with $g(0)=g_{0}$ such that
\begin{equation}\label{eqn:C}
    \frac{\partial^{2}}{\partial s\partial t}\bigg|_{s=t=0}L(g(s),u_{0}+t\kappa)\neq 0\tag{C}.
\end{equation}
Then:
\begin{enumerate}
    \item The map $H:\mathcal{M}\times X\to Y$ is a submersion near $(g_{0},u_{0})$, so there exists a neighborhood $W$ of $(g_{0},u_{0})$ such that
    $$\mathcal{S}=\{(g,u)\in W:H(g,u)=0\}$$
    is a $C^{q}$ Banach submanifold of $\mathcal{M}\times X$ and
    $$T_{(g,u)}\mathcal{S}=\ker(DH_{(g,u)})$$
    for all $(g,u)\in\mathcal{S}$.
    \item The projection $\Pi:\mathcal{S}\to\mathcal{M}$, $\Pi(g,u)=g$ is a $C^{q}$ Fredholm map of index $0$.
\end{enumerate}
\end{thm}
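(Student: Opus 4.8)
The plan is to reduce the whole statement to the single assertion that $DH_{(g_{0},u_{0})}\colon\mathcal{M}\times X\to Y$ is a surjection with closed, complemented kernel. Once that is in hand, part \textit{(1)} is immediate from the implicit function theorem: being a submersion is an open condition, so $H$ is a submersion on a neighborhood $W$ of $(g_{0},u_{0})$, whence $\mathcal{S}=\{(g,u)\in W:H(g,u)=0\}$ is a $C^{q}$ Banach submanifold with $T_{(g,u)}\mathcal{S}=\ker DH_{(g,u)}$; and part \textit{(2)} will follow from a short diagram chase relating $D\Pi$ to $D_{2}H$. So the work is concentrated in (a) computing $\cok(D_{2}H_{(g_{0},u_{0})})$ and (b) showing that $D_{1}H_{(g_{0},u_{0})}$ maps onto a complement of $\image(D_{2}H_{(g_{0},u_{0})})$.

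For (a) I would exploit that $H$ is a gradient: differentiating $\frac{d}{dt}\big|_{t=0}L(g,u+tv)=\langle H(g,u),v\rangle$ once more in $u$ gives
\[
\langle D_{2}H_{(g_{0},u_{0})}v,w\rangle=\frac{\partial^{2}}{\partial s\partial t}\bigg|_{(0,0)}L(g_{0},u_{0}+sv+tw)=\langle v,D_{2}H_{(g_{0},u_{0})}w\rangle
\]
for all $v,w\in X$, so $D_{2}H_{(g_{0},u_{0})}$ is formally self-adjoint for $\langle\,,\,\rangle$. Hence every $\kappa\in K=\ker D_{2}H_{(g_{0},u_{0})}$ is $\langle\,,\,\rangle$-orthogonal to $\image(D_{2}H_{(g_{0},u_{0})})$; since $D_{2}H_{(g_{0},u_{0})}$ is Fredholm of index $0$ its cokernel has dimension $\dim K$, and invoking the regularity encoded in the inclusions $X\subseteq Y\subseteq\mathcal{H}$ (which forces cokernel representatives to lie in $X$, hence in $K$) one obtains a topological direct sum $Y=\image(D_{2}H_{(g_{0},u_{0})})\oplus K$. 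Let $P\colon Y\to K$ be the associated projection; note $\langle\kappa,Pz\rangle=\langle\kappa,z\rangle$ for all $\kappa\in K$ and $z\in Y$.

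For (b) I would feed in hypothesis \eqref{eqn:C}: given $0\neq\kappa\in K$ and $g(s)$ as there, differentiating $\frac{\partial}{\partial t}\big|_{t=0}L(g(s),u_{0}+t\kappa)=\langle H(g(s),u_{0}),\kappa\rangle$ in $s$ at $s=0$ yields $\langle D_{1}H_{(g_{0},u_{0})}g'(0),\kappa\rangle\neq 0$. Thus no nonzero element of $K$ annihilates $\image(D_{1}H_{(g_{0},u_{0})})$, so $P\circ D_{1}H_{(g_{0},u_{0})}\colon\mathcal{M}\to K$ is onto; together with $\ker P=\image(D_{2}H_{(g_{0},u_{0})})$ this makes $DH_{(g_{0},u_{0})}$ onto. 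Its kernel splits because $\ker(P\circ D_{1}H_{(g_{0},u_{0})})$ has finite codimension $\dim K$ in $\mathcal{M}$: pick a finite-dimensional complement $E$ and write $X=K\oplus X'$, so that $D_{2}H_{(g_{0},u_{0})}$ restricts to an isomorphism $X'\to\image(D_{2}H_{(g_{0},u_{0})})$; solving $D_{2}H_{(g_{0},u_{0})}v=-D_{1}H_{(g_{0},u_{0})}h$ then exhibits $E\times X'$ as a closed complement of $\ker DH_{(g_{0},u_{0})}$. This gives \textit{(1)}. For \textit{(2)} I would note that for $(g,u)\in\mathcal{S}$ near $(g_{0},u_{0})$ the differential $D\Pi_{(g,u)}\colon\ker DH_{(g,u)}\to\mathcal{M}$, $(h,v)\mapsto h$, has $\ker D\Pi_{(g,u)}=\{0\}\times\ker D_{2}H_{(g,u)}$ and $\image D\Pi_{(g,u)}=\{h:D_{1}H_{(g,u)}h\in\image D_{2}H_{(g,u)}\}$, so $h\mapsto[D_{1}H_{(g,u)}h]$ induces an isomorphism $\cok D\Pi_{(g,u)}\cong\cok D_{2}H_{(g,u)}$ precisely because $DH_{(g,u)}$ is still onto; hence $\ind D\Pi_{(g,u)}=\ind D_{2}H_{(g,u)}=0$, and $\Pi$ is $C^{q}$ because $D_{2}H$ depends $C^{q-1}$-smoothly on $(g,u)$.

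The step I expect to be the main obstacle is the identification $\cok(D_{2}H_{(g_{0},u_{0})}\colon X\to Y)\cong K$: the gradient structure only gives $K\perp\image(D_{2}H_{(g_{0},u_{0})})$ inside $\mathcal{H}$, and promoting this to a finite-dimensional complement of $\image(D_{2}H_{(g_{0},u_{0})})$ \emph{in $Y$} needs the elliptic-type regularity relating the three spaces $X\subseteq Y\subseteq\mathcal{H}$, which is exactly why that data appears in the hypotheses. In the application in Section \ref{banachmanstr} this is supplied by the ellipticity of the Jacobi operator proved in Section \ref{fredholm}.
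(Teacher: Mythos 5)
Note first that the paper does not prove this statement at all: it is \cite[Theorem~1.2]{White}, quoted verbatim so that it can be applied to the map $H$ built in Sections \ref{lengthsgn} and \ref{fredholm}. So the only thing to assess is your argument on its own merits, and it is essentially a correct reconstruction of White's proof: symmetry of $D_{2}H_{(g_{0},u_{0})}$ from the gradient identity, identification of a complement of $\image(D_{2}H_{(g_{0},u_{0})})$ with $K$, condition (C) to make $P\circ D_{1}H_{(g_{0},u_{0})}$ surjective onto $K$, hence $DH_{(g_{0},u_{0})}$ onto with split kernel (your $E\times X'$ complement does work), and then part \textit{(2)} from the induced isomorphism $\cok D\Pi_{(g,u)}\cong\cok D_{2}H_{(g,u)}$ together with openness of the conditions ``Fredholm of index $0$'' and ``onto''.

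The one place where your justification is off is precisely the step you flag as the main obstacle, and the fix is that no ``elliptic-type regularity'' is needed (nor is any such hypothesis available in the abstract statement). The decomposition $Y=\image(D_{2}H_{(g_{0},u_{0})})\oplus K$ already follows from what you wrote: if $z\in K\cap\image(D_{2}H_{(g_{0},u_{0})})$, say $z=D_{2}H_{(g_{0},u_{0})}v$ with $v\in X$, then symmetry gives $\langle z,z\rangle=\langle v,D_{2}H_{(g_{0},u_{0})}z\rangle=0$, so $z=0$ by definiteness of the $\mathcal{H}$-inner product (note $K\subseteq X$, so the symmetric identity applies); since $\image(D_{2}H_{(g_{0},u_{0})})$ is closed of codimension $\dim K$ in $Y$ and $K\subseteq X\subseteq Y$ injects into the quotient, the dimension count gives $Y=\image(D_{2}H_{(g_{0},u_{0})})\oplus K$, topologically because the image is closed and $K$ is finite dimensional. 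There is no need to ``force cokernel representatives to lie in $X$''; in the application the ellipticity of the Jacobi operator is what makes $D_{2}H$ Fredholm (Proposition \ref{Prop Fredholm}), but inside the abstract theorem Fredholmness is simply a hypothesis. With that substitution your proof is complete; the remaining small points (closedness of $\image D\Pi_{(g,u)}=(D_{1}H_{(g,u)})^{-1}(\image D_{2}H_{(g,u)})$, and the fact that $\Pi$ is $C^{q}$ because it is the restriction of a continuous linear projection to the $C^{q}$ submanifold $\mathcal{S}$, rather than anything about $D_{2}H$) are routine.
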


We want to apply the previous theorem for $\mathcal{M}=\mathcal{M}^{k}$, $X=\Sigma(\hat{U}\cap\hat{\Omega}(\Gamma,M))=C_{0}$ which is a Banach submanifold of $U=\Sigma(\hat{U})$ modeled in the Banach space $\mathcal{X}_{0}=\ker (DC_{u_{0}})\subseteq\mathcal{B}=\prod_{E\in\mathscr{E}}\mathbb{R}^{2}\times C^{2}(E,\mathbb{R}^{n-1})$ (notice that the theorem still works if $X$ is assumed to be a Banach manifold instead of a Banach space, as we are focusing on local structure), $Y=\mathcal{Y}=[\prod_{E\in\mathscr{E}}C^{0}(E,\mathbb{R}^{n-1})]\times(\mathbb{R}^{n})^{|\mathscr{V}|}$, $\mathcal{H}=[\prod_{E\in\mathscr{E}}L^{2}(E,\mathbb{R}^{n-1})]\times(\mathbb{R}^{n})^{|\mathscr{V}|}$, where the inner product of two elements $u_{i}=((u_{i,E})_{E\in\mathscr{E}},(a_{i,v})_{v\in\mathscr{V}})$ of \linebreak $[\prod_{E\in\mathscr{E}}L^{2}(E,\mathbb{R}^{n-1})]\times(\mathbb{R}^{n})^{|\mathscr{V}|}$ ($i=1,2$) is given by
\begin{equation*}
    \langle u_{1},u_{2}\rangle=\sum_{E\in\mathscr{E}}\int_{E}u_{1,E}(t)\cdot u_{2,E}(t)dt+\sum_{v\in\mathscr{V}}a_{1,v}\cdot a_{2,v}
\end{equation*}
where $\cdot$ denotes the Euclidean inner product in $\mathbb{R}^{n}$ or $\mathbb{R}^{n-1}$. We consider the inclusion $\iota:\mathcal{X}_{0}\to \mathcal{Y}$ given by $\iota(u)=((u_{E})_{E\in\mathscr{E}},(c_{i_{v}}(E_{v}),u_{E_{v}}(i_{v}))_{v\in\mathscr{V}})$. The map $L:\mathcal{M}\times X\to\mathbb{R}$ is given by $L(g,u)=l_{g}(u)$ and $H:\mathcal{M}\times X\to Y$ is the previously defined map. Notice that $H$ is of class $C^{q}$ for $q=k-2\geq 1$. By the first variation formula, we have
\begin{equation*}
    \frac{d}{dt}\bigg|_{t=0}L(g,u+tv)=\langle H(g,u),\iota(v)\rangle.
\end{equation*}
As we proved in the previous section that $D_{2}H_{(g_{0},u_{0})}$ is Fredholm of index $0$, in order to apply the theorem it only remains to show that condition (C) holds.

\begin{proof}[Proof that Condition (C) holds]
Let us take $\kappa\in \ker(D_{2}H(g_{0},u_{0}))\setminus\{0\}$. Let $u:(-\alpha,\alpha)\to U$ be a one parameter family in $C_{0}$ with $u(0,\cdot)=u_{0}$ and $\frac{d}{ds}|_{s=0}u_{s}=\kappa$. Write $u_{s}=(a_{E}(s),b_{E}(s),u_{s,E})_{E\in\mathscr{E}}$. Consider the corresponding one parameter family $f_{s}=\tilde{\Lambda}(u_{s})$ and the associated vector field $J=\frac{d}{ds}|_{s=0}f_{s}=D\tilde{\Lambda}(\kappa)$. By Proposition \ref{Prop Jacobi} we know that $J$ is Jacobi along $f_{0}$. We want to construct a one parameter family $g(x)$ of metrics with $g(0)=g_{0}$ such that

\begin{equation*}
    \frac{\partial^{2}}{\partial x\partial s}\bigg|_{x=s=0}L(g(x),u_{s})\neq 0.
\end{equation*}
By definition of $L(g,u)$, this is the same as finding $g(x)$ such that

\begin{equation*}
    \frac{\partial^{2}}{\partial x\partial s}\bigg|_{x=s=0}\sum_{E\in\mathscr{E}}\int_{E}L^{E}_{g(x)}(t,a_{E}(s),b_{E}(s),u_{s,E}(t),\Dot{u}_{s,E}(t))dt\neq 0.
\end{equation*}
We will follow the reasoning from \cite{White}. Consider a one parameter family of metrics $g_{x}(z)=(1+xh(z))g_{0}(z)$ conformal to $g_{0}$ (here $h:M\to\mathbb{R}$ is a smooth function). Then given $u=(a_{E},b_{E},u_{E})_{E\in\mathscr{E}}$ and $E\in\mathscr{E}$,
\begin{equation*}
    L^{E}_{g(x)}(t,a_{E},b_{E},u_{E}(t),\dot{u}_{E}(t))=\sqrt{1+xh(f_{E}(t))}L_{g_{0}}(t,a_{E},b_{E},u_{E}(t),\dot{u}_{E}(t))
\end{equation*}
where $f_{E}(t)=\mathcal{E}\circ\phi_{E}(a_{E}(1-t)+b_{E}t,u_{E}(t))$. Suppose that $h$ vanishes along $f_{0}$. Denote $f_{E}(s,t)=\mathcal{E}\circ\phi_{E}(a_{E}(s)(1-t)+b_{E}(s)t,u_{s,E}(t))$ the restriction of the previously defined $f_{s}$ to the edge $E$. Then

\begin{align*}
  & \frac{\partial^{2}}{\partial x\partial s}\bigg|_{x=s=0}\sum_{E\in\mathscr{E}}\int_{E}L^{E}_{g(x)}(t,a_{E}(s),b_{E}(s),u_{s,E}(t),\Dot{u}_{s,E}(t))dt & \notag \\
  &= \frac{d}{ds}\bigg|_{s=0}\sum_{E\in\mathscr{E}}\int_{E}\frac{1}{2}h(f_{E}(s,t))L^{E}_{g_{0}}(t,a_{E}(s),b_{E}(s),u_{s}(t),\Dot{u}_{s}(t))dt\\
  \begin{split}
  &= \frac{1}{2}\sum_{E\in\mathscr{E}}\int_{E}\big[\frac{d}{ds}\bigg|_{s=0}h(f_{E}(s,t))\big]L_{g_{0}}^{E}(t,a_{E}(0),b_{E}(0),u_{0,E}(t),\dot{u}_{0,E}(t))dt\\
  & \quad+ \frac{1}{2}\sum_{E\in\mathscr{E}}\int_{E}h(f_{0}(t))\big[\frac{d}{ds}\bigg|_{s=0}L^{E}_{g_{0}}(t,a_{E}(s),b_{E}(s),u_{s}(t),\Dot{u}_{s}(t))\big]dt
  \end{split}\\
  &= \frac{1}{2}\sum_{E\in\mathscr{E}}\int_{E}\langle\nabla h_{f_{0}(t)},J(t)\rangle_{\gamma_{0}} L_{g_{0}}^{E}(t,a_{E}(0),b_{E}(0),u_{0,E}(t),\dot{u}_{0,E}(t))dt
\end{align*}
where we used that $h(f_{0}(t))=0$ for all $t\in\Gamma$ because $h$ vanishes along $f_{0}$ and that $\frac{\partial f}{\partial s}(0,t)=D\tilde{\Lambda}_{u_{0}}(\kappa)=J$.

By Proposition \ref{Prop Jacobi}, $J=D\tilde{\Lambda}_{u_{0}}(\kappa)$ is not a parallel Jacobi field along $f_{0}=\Lambda(u_{0})$ because $\kappa\neq 0$. Therefore, there must exist an edge $E_{0}\in\mathscr{E}$ and an interior point $t_{0}\in \interior(E_{0})$ such that $J(t_{0})$ is not parallel to $\dot{f}_{0}(t_{0})$. It is possible to define the smooth function $h:M\to\mathbb{R}$ with  the following properties:
\begin{enumerate}
    \item $h$ has support in a small ball around $f_{0}(t_{0})$ which does not intersect $f_{0}(E)$ for any $E\in\mathscr{E}\setminus\{E_{0}\}$.
    \item $h(f_{0}(t))=0$ for all $t\in\Gamma$.
    \item $\langle \nabla h_{f_{0}(t)},J(t)\rangle_{\gamma_{0}}\geq 0$ for all $t\in\Gamma$ and $\langle \nabla h_{f_{0}(t_{0})},J(t_{0})\rangle_{\gamma_{0}}>0$.
\end{enumerate}
As $L_{g_{0}}(t,a_{E}(0),b_{E}(0),u_{0,E}(t),\dot{u}_{0,E}(t))>0$ for all $t\in\Gamma$ we deduce

\begin{equation*}
    \sum_{E\in\mathscr{E}}\int_{E}\langle\nabla h_{f_{0}(t)},J(t)\rangle_{\gamma_{0}}L_{g_{0}}(t,a_{E}(0),b_{E}(0),u_{0,E}(t),\dot{u}_{0,E}(t))dt > 0
\end{equation*}
and hence condition (C) is satisfied and Theorem \ref{MST} can be applied.
\end{proof}

The previous shows that $\mathcal{S}^{k}(\Gamma)\subseteq\mathcal{M}^{k}\times\hat{\Omega}^{emb}(\Gamma,M)$ is a $C^{0}$ embedded Banach submanifold (recall that the charts of $\hat{\Omega}(\Gamma,M)$ do not have differentiable transition maps). We will prove that in fact the transition maps on the induced atlas for $\mathcal{S}^{k}(\Gamma)$ are of class $C^{k-2}$ (i.e. as regular as they can be), using an argument of Brian White (see \cite[p. 179]{White}).

The idea will be to use that $\Pi:\mathcal{S}\to\mathcal{M}^{k}$ is Fredholm of index $0$ (as shown above) to prove that the Banach space modelling $\mathcal{S}^{k}(\Gamma)$ is the one which models $\mathcal{M}^{k}$. This is clear when $\ker(D\Pi_{(g_{0},u_{0})})=0$ by the Inverse Function Theorem. In general, we can do the following construction.

Let $(g_{0},f_{0})\in\mathcal{S}^{k}(\Gamma)$ and consider two charts $(\hat{U}_{1},\Sigma_{1})$ and $(\hat{U}_{2},\Sigma_{2})$ containing $f_{0}$. Denote $\mathcal{S}_{i}=id\times\Sigma_{i}((\mathcal{M}^{k}\times\hat{U}_{1}\cap\hat{U}_{2})\cap \mathcal{S}^{k}(\Gamma))\subseteq\mathcal{M}^{k}\times\mathcal{B}$ and $u_{i}=\Sigma_{i}(f_{0})$. We know that $K_{i}=\ker(D\Pi_{(g_{0},u_{i})})$ is finite dimensional. Define a map $\Psi:\mathcal{S}^{k}(\Gamma)\to\mathcal{M}^{k}\times\mathbb{R}^{Q}$ as
\begin{equation*}
    \Psi(g,f)=(g,\int_{f}\omega_{1},...,\int_{f}\omega_{Q})
\end{equation*}
where $\omega_{1},...,\omega_{Q}$ are smooth $1$-forms on $M$ to be chosen. The idea is to choose these differential forms so that the maps $\Psi\circ\Lambda_{i}:\mathcal{S}_{i}\to\mathcal{M}^{k}\times\mathbb{R}^{Q}$ are $C^{k-2}$ embeddings in a small neighborhood of $u_{i}$. Notice that for any choice of the $\omega_{j}$'s, $\Psi\circ\Lambda_{i}$ is $C^{k-2}$ and has finite dimensional kernel and cokernel, hence it suffices to do the choices so that their differentials are injective. Denote $F^{\omega}(g,u)=\int_{\Lambda_{1}(u)}\omega$, $F^{\omega}:\mathcal{S}_{1}\to\mathbb{R}$. We will choose $\{\omega_{1},...,\omega_{r}\}$ so that for every $\kappa\in K_{1}\setminus\{0\}$ there exists $1\leq j\leq r$ so that $DF^{\omega_{j}}_{(g_{0},u_{1})}(\kappa)\neq 0$. Then we will do the same for $K_{2}$ by choosing the forms $\omega_{r+1},...,\omega_{Q}$ in an analogous way. This will guaranty that $D(\Psi\circ\Lambda_{i})_{(g_{0},u_{i})}$ is a monomorphism for $i=1,2$.

Given $\kappa\in K_{1}\setminus\{0\}$, if $\{\phi_{s}\}_{s}$ is the one-parameter family of diffeomorphisms generated by $J=D\Lambda_{u_{0}}(\kappa)$, we have
\begin{align*}
    DF^{\omega}_{(g_{0},u_{1})}(\kappa) & =\frac{d}{ds}\bigg|_{s=0}\int_{\Lambda(u_{1}+s\kappa_{i})}\omega\\
    & =\frac{d}{ds}\bigg|_{s=0}\int_{f_{0}}\phi_{s}^{*}\omega\\
    & =\int_{f_{0}}\frac{d}{ds}\bigg|_{s=0}\phi_{s}^{*}\omega\\
    & =\int_{f_{0}}\mathcal{L}_{J}\omega\\
    & =\int_{f_{0}}d\iota_{J}\omega+\iota_{J}d\omega
\end{align*}
by Cartan's magic formula. Thus given $\kappa\in K_{1}\setminus\{0\}$, as $J$ is a nonparallel Jacobi field along $f_{0}$, we can pick a point $t_{0}$ in the interior of an edge such that $J(t_{0})\notin\langle \dot{f}_{0}(t_{0})\rangle$. Then we can define a smooth function $h$ with support in a small ball around $f_{0}(t_{0})$, which vanishes along $f_{0}$ and such that $\langle J(t),\nabla h_{f_{0}(t)}\rangle_{\gamma_{0}}\geq 0$ for all $t$, with strict inequality at $t=t_{0}$ as described above. Then if $\eta$ is a $1$-form on $M$ extending $dL_{g}|_{f_{E}}$, the form $\omega=h\eta$ will work. This is because $\int_{f_{0}}d\iota_{J}\omega$ vanishes as $\iota_{J}\omega$ vanishes at all the vertices of $\Gamma$, and as $h$ vanishes along $f_{0}$,
\begin{equation*}
    \int_{f_{0}}\iota_{J}d\omega=\int_{\Gamma} dh\wedge \eta(J(t),\dot{f}_{0}(t))dt=\int_{\Gamma}dh(J(t))\eta(\dot{f}_{0}(t))dt=\int_{\Gamma}\langle\nabla h_{f_{0}(t)},J(t)\rangle_{\gamma_{0}} dt
\end{equation*}
because $dh(\dot{f}_{0}(t))=0$. The previous quantity is strictly positive by construction of $h$, being $DF^{\omega}_{(g_{0},u_{1})}(\kappa)\neq 0$. Although this $\omega$ is a priori only $C^{k}$, we can perturb it slightly so that it becomes smooth but still verifies $DF^{\omega}_{(g_{0},u_{1})}(\kappa)\neq 0$.

On the other hand, given a smooth $1$-form $\omega$, the set $A^{\omega}:=\{\kappa\in K_{1}:DF^{\omega}_{(g_{0},u_{1})}(\kappa)\neq 0\}$ is open. Therefore, $\{A^{\omega}:\omega\text{ 1-form on }M\}$ is an open cover of $K_{1}\setminus\{0\}$. Take $\{\omega_{1},...,\omega_{r}\}$ such that $\{A^{\omega_{j}}:1\leq j\leq r\}$ is a finite subcover of $\{\kappa\in K_{1}:|\kappa|=1\}$ (which is compact independently of the norm $|\cdot|$ we choose for $K_{1}$ as it is finite dimensional). It follows that if $F=(F^{\omega_{1}},...,F^{\omega_{r}})$ then $DF_{(g_{0},u_{1})}(\kappa)\neq 0$ for every $\kappa\in K_{1}\setminus\{0\}$. 
Proceeding equally for $K_{2}$, we obtain that $\Psi\circ\Lambda_{i}$ is an immersion near $(g_{0},u_{i})$ for $i=1,2$. Then the transition map $\Lambda_{2}^{-1}\circ\Lambda_{1}=(\Psi\circ\Lambda_{2})^{-1}\circ(\Psi\circ\Lambda_{1})$ is a $C^{k-2}$ diffeomorphism of Banach manifolds near $(g_{0},u_{1})$, as desired.

Observe that Proposition \ref{Prop Jacobi} implies that if $(g,[f])\in\mathcal{S}^{k}(\Gamma)$, $f$ is nondegenrerate if and only if given a chart $(\hat{U},\Sigma)$ containing $f$ with $\Sigma([f])=u$ we have that $u$ is nondegenerate as defined in Section \ref{lengthsgn}. But the following $4$ conditions are equivalent:
\begin{enumerate}
    \item $D\Pi_{(g,u)}$ is an epimorphism.
    \item $D\Pi_{(g,u)}$ is injective.
    \item $D_{2}H_{(g,u)}$ is injective.
    \item $u$ is nondegenerate with respect to $g$
\end{enumerate}
as $\ker(D\Pi_{(g,u)})=\{0\}\times\ker D_{2}H_{(g,u)}$. This completes the proof of Theorem \ref{structurethm} for good* weighted multigraphs. The theorem for closed loops with multiplicity is a particular case of the Structure Theorem of Brian White proved in \cite{White}, and this covers all the cases of Theorem \ref{structurethm}.

On the other hand, by Smale's version of Sard's Theorem for Banach spaces proved in \cite{Smale}, for each good weighted multigraph $\Gamma$ the subset $\mathcal{N}^{k}(\Gamma)\subseteq\mathcal{M}^{k}$ of regular values of $\Pi:\mathcal{S}^{k}(\Gamma)\to\mathcal{M}^{k}$ is generic in the Baire sense. Observe that parts \textit{(2)} and \textit{(3)} of Theorem \ref{structurethm} imply that $g\in\mathcal{N}^{k}(\Gamma)$ if and only if $g$ is bumpy with respect to $\Gamma$. Considering that the collection $\{\Gamma:\Gamma\text{ is a good weighed multigraph}\}$ is countable, $\mathcal{N}^{k}:=\bigcap_{\Gamma}\mathcal{N}^{k}(\Gamma)$ is also generic in the Baire sense and is by definition the set of bumpy $C^{k}$ metrics. This proves Theorem \ref{bumpythm} in the case $k<\infty$.

\section{$C^{\infty}$ case}\label{Cinfty}
In this section we are going to discuss how to extend Theorem \ref{bumpythm} to $C^{\infty}$ Riemannian metrics (the analog result for minimal submanifolds is stated in \cite{White2}). Denote $\mathcal{M}^{\infty}=\cap_{k\in\mathbb{N}}\mathcal{M}^{k}$ the space of $C^{\infty}$ Riemannian metrics on $M$ equipped with the $C^{\infty}$ topology, which admits a natural Frechet manifold structure.

\begin{thm}\label{Cinfty thm}
The subset $\mathcal{N}^{\infty}\subseteq\mathcal{M}^{\infty}$ of bumpy $C^{\infty}$ metrics is generic in the Baire sense with respect to the $C^{\infty}$ topology.
\end{thm}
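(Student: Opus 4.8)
The plan is to realize $\mathcal{N}^{\infty}$ as a superset of a countable intersection of open dense subsets of $\mathcal{M}^{\infty}$, which is a Baire space, being an open subset of the Fr\'echet space of smooth symmetric $2$-tensors on $M$. Fix an exhaustion $K_{1}\subseteq K_{2}\subseteq\cdots$ of $M$ by compact sets (if $M$ is closed take every $K_{i}=M$). For a good weighted multigraph $\Gamma$ and $i,j\in\mathbb{N}$, let $\mathcal{A}(\Gamma,i,j)\subseteq\mathcal{M}^{\infty}$ be the set of metrics $g$ such that every stationary geodesic net $f:\Gamma\to M$ with respect to $g$ satisfying $l_{g}(f)\leq j$, $l_{g}(f_{E})\geq 1/j$ for all $E\in\mathscr{E}$, $f(\Gamma)\subseteq K_{i}$, and $\Distance_{C^{2}}\bigl(f,\,C^{2}(\Gamma,M)\setminus\Omega^{emb}(\Gamma,M)\bigr)\geq 1/j$ is nondegenerate with respect to $g$. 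The last requirement is a quantitative form of embeddedness: since $f\mapsto\Distance_{C^{2}}(f,\cdot)$ is $1$-Lipschitz it is a closed condition under $C^{2}$ convergence, it forces $f\in\Omega^{emb}(\Gamma,M)$, and any fixed embedded $\Gamma$-net satisfies it once $j$ is large. As the index set $\{(\Gamma,i,j)\}$ is countable, it suffices to prove: (a) each $\mathcal{A}(\Gamma,i,j)$ is open in $\mathcal{M}^{\infty}$; (b) each $\mathcal{A}(\Gamma,i,j)$ is dense in $\mathcal{M}^{\infty}$; (c) $\bigcap_{\Gamma,i,j}\mathcal{A}(\Gamma,i,j)\subseteq\mathcal{N}^{\infty}$. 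For (c), if $g\in\mathcal{M}^{\infty}$ is not bumpy there is a good weighted multigraph $\Gamma_{0}$ and an embedded degenerate stationary net $f_{0}:\Gamma_{0}\to M$ with respect to $g$; then $f_{0}$ has positive edge lengths, compact image, and lies a positive $C^{2}$-distance from the non-embedded maps, so $g\notin\mathcal{A}(\Gamma_{0},i,j)$ for $i,j$ large.

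Parts (a) and (b) are deduced, for every $k\geq 3$, from openness of the corresponding $C^{k}$ set: the same four conditions define a set $\mathcal{A}^{k}(\Gamma,i,j)\subseteq\mathcal{M}^{k}$ with $\mathcal{A}^{k}(\Gamma,i,j)\cap\mathcal{M}^{\infty}=\mathcal{A}(\Gamma,i,j)$. Since bumpiness with respect to $\Gamma$ means all embedded stationary $\Gamma$-nets are nondegenerate, $\mathcal{N}^{k}(\Gamma)\subseteq\mathcal{A}^{k}(\Gamma,i,j)$; as $\mathcal{N}^{k}(\Gamma)$ is dense in $\mathcal{M}^{k}$ (Section \ref{banachmanstr}), so is $\mathcal{A}^{k}(\Gamma,i,j)$. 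Granting that $\mathcal{A}^{k}(\Gamma,i,j)$ is \emph{open} in $\mathcal{M}^{k}$ (proved in the next paragraph), (a) is immediate from continuity of $\mathcal{M}^{\infty}\hookrightarrow\mathcal{M}^{k}$. For (b): given $g_{0}\in\mathcal{M}^{\infty}$, $m\geq 3$ and $\varepsilon>0$, pick by density a $C^{m}$ metric $g_{1}$, bumpy with respect to $\Gamma$, with $\Vert g_{1}-g_{0}\Vert_{C^{m}}<\varepsilon/2$, so $g_{1}\in\mathcal{A}^{m}(\Gamma,i,j)$; choose $\delta>0$ with $\Vert g-g_{1}\Vert_{C^{m}}<\delta\Rightarrow g\in\mathcal{A}^{m}(\Gamma,i,j)$; then smooth $g_{1}$ to a metric $g\in\mathcal{M}^{\infty}$ with $\Vert g-g_{1}\Vert_{C^{m}}<\min(\delta,\varepsilon/2)$, obtaining $g\in\mathcal{A}^{m}(\Gamma,i,j)\cap\mathcal{M}^{\infty}=\mathcal{A}(\Gamma,i,j)$ with $\Vert g-g_{0}\Vert_{C^{m}}<\varepsilon$.

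It remains to show $\mathcal{A}^{k}(\Gamma,i,j)$ is open in $\mathcal{M}^{k}$, i.e. that its complement is closed. Suppose $g_{n}\to g_{\infty}$ in $\mathcal{M}^{k}$ and each $g_{n}$ admits a ``bad'' net $f_{n}$: stationary with respect to $g_{n}$, with $l_{g_{n}}(f_{n})\leq j$, $l_{g_{n}}(f_{n,E})\geq 1/j$ for all $E$, $f_{n}(\Gamma)\subseteq K_{i}$, $\Distance_{C^{2}}(f_{n},\,C^{2}(\Gamma,M)\setminus\Omega^{emb}(\Gamma,M))\geq 1/j$, and $f_{n}$ degenerate with respect to $g_{n}$. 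Parametrize every edge with constant speed. Each edge of $f_{n}$ is then a $g_{n}$-geodesic of length in $[1/j,j]$ contained in the fixed compact set $K_{i}$, so the maps $f_{n}|_{E}$ are uniformly bounded in $C^{k+1}$; after passing to a subsequence, $f_{n}\to f_{\infty}$ in $C^{k}$. The limit $f_{\infty}$ is continuous on $\Gamma$, each $f_{\infty}|_{E}$ is a constant-speed $g_{\infty}$-geodesic of length in $[1/j,j]$, and the vertex balancing conditions $V(f_{n})(v)=0$ (computed with $g_{n}$) pass to the limit because the unit tangent vectors involved converge, edge speeds staying bounded below; hence $f_{\infty}$ is stationary with respect to $g_{\infty}$. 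Moreover $l_{g_{\infty}}(f_{\infty})\leq j$, $f_{\infty}(\Gamma)\subseteq K_{i}$, and $\Distance_{C^{2}}(f_{\infty},\,C^{2}(\Gamma,M)\setminus\Omega^{emb}(\Gamma,M))\geq 1/j$ by continuity of the distance, so $f_{\infty}\in\Omega^{emb}(\Gamma,M)$ and $[f_{n}]\to[f_{\infty}]$ in $\hat{\Omega}^{emb}(\Gamma,M)$; thus $(g_{n},[f_{n}])\to(g_{\infty},[f_{\infty}])$ in $\mathcal{S}^{k}(\Gamma)$, which is a topological subspace of $\mathcal{M}^{k}\times\hat{\Omega}^{emb}(\Gamma,M)$. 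By parts (2)--(3) of Theorem \ref{structurethm}, $f$ is nondegenerate with respect to $g$ exactly when $D\Pi_{(g,f)}$ is an isomorphism, and the locus of such $(g,f)$ is open in $\mathcal{S}^{k}(\Gamma)$ because $D\Pi$ is a (continuous) Fredholm bundle map of index $0$; hence the degenerate locus is closed. Since every $(g_{n},[f_{n}])$ is degenerate, so is $(g_{\infty},[f_{\infty}])$, and therefore $g_{\infty}$ lies in the complement of $\mathcal{A}^{k}(\Gamma,i,j)$, as required.

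The case in which $\Gamma$ is a simple loop with multiplicity is handled identically (and is also covered by the classical $C^{\infty}$ bumpy metrics theorem for closed geodesics), which completes the proof. The delicate step is the last one, propagating degeneracy to the limit: a direct extraction of a limiting Jacobi field is awkward because the space of parallel Jacobi fields can jump in dimension along the sequence, so a nonzero limiting Jacobi field could a priori turn out to be parallel; routing the argument instead through the openness of the isomorphism locus of $D\Pi$ — equivalently, through the continuity of $D_{2}H$ and the stability of invertible Fredholm operators, via Propositions \ref{Prop Jacobi} and \ref{Prop Fredholm} — avoids this entirely. The remaining ingredients (uniform $C^{k+1}$ bounds for geodesic edges of controlled length inside $K_{i}$, continuity of $\Distance_{C^{2}}$, and countability of $\{(\Gamma,i,j)\}$) are routine.
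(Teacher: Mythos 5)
Your proposal is correct in spirit but takes a genuinely different route from the paper. The paper's Section \ref{Cinfty} proves a soft lemma: writing the $C^{3}$-generic set of bumpy metrics as a countable intersection of open dense sets $\mathcal{N}^{3,l}$, the traces $\mathcal{N}^{3,l}\cap\mathcal{M}^{k}$ and $\mathcal{N}^{3,l}\cap\mathcal{M}^{\infty}$ remain open (finer topologies) and dense (Baire density of $\mathcal{N}^{k}$ plus density of smooth metrics in $\mathcal{M}^{k}$), and their intersection is exactly $\mathcal{N}^{\infty}$; no new geometric input beyond the $k<\infty$ theorem is needed. You instead prove a quantitative, White-style statement: explicit classes $\mathcal{A}(\Gamma,i,j)$ shown open by a compactness argument for stationary nets with controlled total length, edge length, image and quantitative embeddedness, with degeneracy propagated to the limit via openness of the isomorphism locus of $D\Pi$ on $\mathcal{S}^{k}(\Gamma)$ (Theorem \ref{structurethm}), and shown dense from the $C^{k}$ theorem plus smoothing. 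Your route is longer and re-proves a compactness statement the paper never needs, but it buys more: explicit open dense sets, i.e.\ a uniform version of the bumpy theorem on each compactness class; and your decision to route ``degeneracy passes to the limit'' through the stability of invertibility of $D\Pi$, rather than extracting a limiting Jacobi field, is exactly the right move.

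Two steps in your write-up need repair, though both are fixable. First, the assertion that \emph{any} fixed embedded $\Gamma$-net satisfies $\Distance_{C^{2}}\bigl(f,\,C^{2}(\Gamma,M)\setminus\Omega^{emb}(\Gamma,M)\bigr)>0$ is false in general: two edges leaving a vertex with the same inward tangent direction but distinct germs give an embedded net that is a $C^{2}$-limit of non-injective nets, so embeddedness alone does not give a positive distance. What you actually use in step (c) is the statement for embedded \emph{stationary} nets, and there it is true: since the edges are geodesics, uniqueness of geodesics forces pairwise distinct inward unit tangents at every vertex (this is the argument in the proof of part (1) of Proposition \ref{Prop Jacobi}), and then local injectivity plus compactness gives a positive lower bound; you should state and prove this, since it is also what makes the quantitative embeddedness condition attainable for large $j$. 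Second, the distance condition is not reparametrization-invariant, yet in the openness argument you replace the bad nets $f_{n}$ by their constant-speed reparametrizations before passing to the limit; as written, the constant-speed representatives need not satisfy the bound $\geq 1/j$, so the embeddedness of $f_{\infty}$ is not justified. The clean fix is to build the constant-speed normalization into the definition of $\mathcal{A}(\Gamma,i,j)$ (quantify only over constant-speed stationary nets); stationarity, degeneracy, lengths and images are reparametrization-invariant, so step (c) is unaffected, and the compactness argument then applies verbatim. With these two repairs your argument goes through and gives an alternative, more quantitative proof of Theorem \ref{Cinfty thm}.
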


In order to prove the theorem, we will need the following lemma.

\begin{lemma}
Let $\mathcal{N}^{k}\subseteq\mathcal{M}^{k}$ be a generic subset in the Baire sense with respect to the $C^{k}$ topology for each $k\in\mathbb{N}_{\geq 3}$. Assume that if $k'\geq k$ then $\mathcal{N}^{k'}=\mathcal{N}^{k}\cap\mathcal{M}^{k'}$. Then $\mathcal{N}^{\infty}=\cap_{k\in\mathbb{N}}\mathcal{N}^{k}\subseteq\mathcal{M}^{\infty}$ is generic in the Baire sense with respect to the $C^{\infty}$ topology.
\end{lemma}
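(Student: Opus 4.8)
The plan is to exhibit a dense $G_\delta$ subset of $\mathcal M^\infty$, in the $C^\infty$ topology, that is contained in $\mathcal N^\infty$; since $\mathcal M^\infty$ is a Fréchet manifold, hence completely metrizable and Baire, this yields genericity. First I would record what the compatibility hypothesis buys us: for $q\le p$ we have $\mathcal N^p=\mathcal N^q\cap\mathcal M^p\subseteq\mathcal N^q$, so a $C^\infty$ metric lies in one $\mathcal N^k$ iff it lies in all of them; consequently $\mathcal N^\infty=\mathcal N^q\cap\mathcal M^\infty$ for every $q\ge 3$, and each $\mathcal N^p$ is $C^p$-residual, hence $C^p$-dense, in $\mathcal M^p$. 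Fix $q=3$. Using that $\mathcal N^q$ is a $G_\delta$ (in the application it is the countable intersection over good $\Gamma$ of the sets of regular values of $\Pi$, and the set of regular values of a $C^{k-2}$ Fredholm map between second countable Banach manifolds is a $G_\delta$, its critical values forming an $F_\sigma$), write $\mathcal N^q=\bigcap_j\mathcal U^q_j$ with each $\mathcal U^q_j\subseteq\mathcal M^q$ open and dense and, crucially, $\mathcal N^q\subseteq\mathcal U^q_j$ for every $j$. Put $\mathcal G:=\bigcap_j(\mathcal U^q_j\cap\mathcal M^\infty)$; this is a $C^\infty$-$G_\delta$ in $\mathcal M^\infty$ with $\mathcal G\subseteq\mathcal N^q\cap\mathcal M^\infty=\mathcal N^\infty$, so it suffices to prove $\mathcal G$ is $C^\infty$-dense.

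For the density I would run a diagonal argument climbing the tower $\mathcal M^m\supseteq\mathcal M^{m+1}\supseteq\cdots$. Fix $g_0\in\mathcal M^\infty$ and a basic $C^\infty$-neighborhood of it; on closed $M$ this may be taken to be $\{g:\|g-g_0\|_{C^m}<\varepsilon\}$ for some $m\ge q$ (for general $M$ one uses the usual description of $C^\infty$-neighborhoods and works in a chart, the argument being unchanged). I then build metrics $f_1,f_2,\dots$ with $f_\ell\in\mathcal N^{\,m+\ell}$, so that $f_\ell\in\mathcal M^{m+\ell}$ and, since $\mathcal N^{\,m+\ell}\subseteq\mathcal N^q\subseteq\mathcal U^q_j$, also $f_\ell\in\mathcal U^q_j$ for all $j$; and with $\|f_1-g_0\|_{C^m}<\varepsilon/2$ and $\|f_{\ell+1}-f_\ell\|_{C^{m+\ell}}<\delta_{\ell+1}$ for a summable sequence $\delta_\ell\downarrow 0$ chosen recursively. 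Each step is possible because $\mathcal N^{\,m+\ell+1}=\mathcal N^q\cap\mathcal M^{m+\ell+1}$ is $C^{m+\ell+1}$-dense in $\mathcal M^{m+\ell+1}$ by hypothesis, and therefore, $\mathcal M^{m+\ell+1}$ being $C^{m+\ell}$-dense in $\mathcal M^{m+\ell}$, it is $C^{m+\ell}$-dense in $\mathcal M^{m+\ell}$, so a suitable $f_{\ell+1}$ exists near $f_\ell$. Since $\|f_{\ell+1}-f_\ell\|_{C^p}\le\|f_{\ell+1}-f_\ell\|_{C^{m+\ell}}<\delta_{\ell+1}$ once $m+\ell\ge p$, the sequence is $C^p$-Cauchy for every $p$, hence converges in the $C^\infty$ topology to some $g_\infty\in\mathcal M^\infty$ with $\|g_\infty-g_0\|_{C^m}<\varepsilon$. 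Finally I use the standard bookkeeping: when $f_\ell$ is produced, pick $r_\ell>0$ with the $C^q$-ball $B_{C^q}(f_\ell,2r_\ell)\subseteq\mathcal U^q_\ell$ (possible, as $\mathcal U^q_\ell$ is open and contains $f_\ell$) and require all later $\delta$'s to force $\sum_{\ell'\ge\ell}\|f_{\ell'+1}-f_{\ell'}\|_{C^q}<r_\ell$; this is legitimate since at any stage only finitely many such constraints are active. Then $g_\infty\in B_{C^q}(f_\ell,2r_\ell)\subseteq\mathcal U^q_\ell$ for every $\ell$, so $g_\infty\in\bigcap_\ell\mathcal U^q_\ell\cap\mathcal M^\infty=\mathcal G$. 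Hence $\mathcal G$ is dense, so $\mathcal N^\infty$ contains the dense $G_\delta$ set $\mathcal G$ and is generic in the Baire sense in $\mathcal M^\infty$.

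The subtle point, and the reason the hypothesis must assume $\mathcal N^k$ residual for every $k$ rather than for a single $k$, is that a $C^k$-open, $C^k$-dense subset of $\mathcal M^k$ need not have $C^\infty$-dense trace in $\mathcal M^\infty$ — it can miss an entire $C^{k+1}$-ball — so one cannot simply intersect $\mathcal N^k$ with $\mathcal M^\infty$. The real work in the density step is to climb the tower while simultaneously (i) increasing regularity at each stage, (ii) staying close in every lower $C^p$-norm so that the limit is genuinely $C^\infty$, and (iii) not escaping the base-level open sets $\mathcal U^q_j$; it is exactly the assumption that $\mathcal N^q\cap\mathcal M^p$ is $C^p$-dense in $\mathcal M^p$ for all $p\ge q$ that provides the room to do (i) and (ii) at once, and the containment $\mathcal N^q\subseteq\mathcal U^q_j$ (available because $\mathcal N^q$ is $G_\delta$) that makes (iii) automatic along the constructed sequence. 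Organizing the recursion so these three requirements are mutually compatible — the bookkeeping of the $\delta_\ell$ and radii $r_\ell$ above — is the only genuine obstacle; the rest is the Baire category theorem.
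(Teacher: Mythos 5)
Your argument is correct, and it reaches the conclusion by a genuinely different route than the paper in the density step. The paper never takes a limit: it works one open dense set at a time, setting $\mathcal{N}^{k,l}=\mathcal{N}^{3,l}\cap\mathcal{M}^{k}$, observing that this is $C^{k}$-open (trace of a $C^{3}$-open set) and $C^{k}$-dense because it contains the dense set $\mathcal{N}^{k}=\mathcal{N}^{3}\cap\mathcal{M}^{k}$; then, given a $C^{\infty}$-neighborhood of $g_{0}$ containing a $d_{k}$-ball, it first picks $g_{1}\in\mathcal{N}^{k,l}$ with $d_{k}(g_{1},g_{0})<\delta/2$ and then, using $C^{k}$-openness of $\mathcal{N}^{k,l}$ together with density of $\mathcal{M}^{\infty}$ in $\mathcal{M}^{k}$, a smooth $g_{2}\in\mathcal{N}^{k,l}\cap\mathcal{M}^{\infty}$ with $d_{k}(g_{2},g_{1})<\delta/2$. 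Thus each trace $\mathcal{N}^{\infty,l}=\mathcal{N}^{3,l}\cap\mathcal{M}^{\infty}$ is open and dense in $\mathcal{M}^{\infty}$, and their intersection lies in $\mathcal{N}^{\infty}$; the ``subtle point'' you identify is handled there not by a diagonal scheme but simply by letting $k$ be dictated by the given $C^{\infty}$-neighborhood and invoking the compatibility hypothesis at that level $k$ (which is exactly what makes the trace of the level-$3$ open set dense at level $k$). Your route instead establishes density of the full $G_{\delta}$ set $\mathcal{G}$ in one stroke, by the recursive tower-climbing construction with summable increments and the radii $r_{\ell}$; this is more work but sound, with two small caveats you should add: the $\delta_{\ell}$ must also be taken small enough in $C^{0}$ that the limit tensor $g_{\infty}$ remains positive definite, so that it genuinely lies in $\mathcal{M}^{\infty}$ (the paper avoids this because its approximant $g_{2}$ is produced directly, with no limit), and both your reduction and the paper's proof read ``generic'' as meaning that $\mathcal{N}^{3}$ equals, rather than merely contains, a countable intersection of open dense sets — you flag this and justify it by the application, which is the same license the paper takes. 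In exchange for the extra bookkeeping, your construction produces an explicit smooth metric in the whole intersection, in effect re-proving the relevant instance of the Baire category argument by hand, whereas the paper's two-step approximation is shorter and free of any convergence analysis.
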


\begin{proof}
Let us write $\mathcal{N}^{3}=\cap_{l\in\mathbb{N}}\mathcal{N}^{3,l}$ where each $\mathcal{N}^{3,l}$ is open and dense in $\mathcal{M}^{3}$ with the $C^{3}$ topology. For each $k\geq 3$ define $\mathcal{N}^{k,l}=\mathcal{N}^{3,l}\cap\mathcal{M}^{k}$. Observe that given $k\geq 3$,
\begin{equation*}
    \bigcap_{l\in\mathbb{N}}\mathcal{N}^{k,l}=(\bigcap_{l\in\mathbb{N}}\mathcal{N}^{3,l})\cap\mathcal{M}^{k}=\mathcal{N}^{3}\cap\mathcal{M}^{k}=\mathcal{N}^{k}.
\end{equation*}
As by hypothesis $\mathcal{N}^{k}\subseteq\mathcal{M}^{k}$ is generic, by the Baire Category Theorem it is dense and therefore each $\mathcal{N}^{k,l}\subseteq\mathcal{M}^{k}$ is dense and also open (because the $C^{k}$ topology is finer than the $C^{3}$ topology for every $k\geq 3$). Define
\begin{equation*}
    \mathcal{N}^{\infty,l}=\mathcal{N}^{3,l}\cap\mathcal{M}^{\infty}=\mathcal{N}^{3,l}\cap\bigcap_{k\geq 3}\mathcal{M}^{k}=\bigcap_{k\geq 3}\mathcal{N}^{k,l}.
\end{equation*}
Let us show that $\mathcal{N}^{\infty,l}\subseteq\mathcal{M}^{\infty}$ is dense. Pick $g_{0}\in\mathcal{M}^{\infty}$ and an open neighborhood $W$ of $g_{0}$ in $\mathcal{M}^{\infty}$. Let $k\in\mathbb{N}_{\geq 3}$ and $\delta>0$ be such that $\{g\in\mathcal{M}^{\infty}:d_{k}(g,g_{0})<\delta\}\subseteq W$ where $d_{k}$ is a metric which induces the $C^{k}$ topology on $\mathcal{M}^{k}$. By density of $\mathcal{N}^{k,l}$ in $\mathcal{M}^{k}$, there exists $g_{1}\in\mathcal{N}^{k,l}$ such that $d_{k}(g_{1},g_{0})<\frac{\delta}{2}$. On the other hand, as $\mathcal{M}^{\infty}\subseteq\mathcal{M}^{k}$ is dense with the $C^{k}$ topology and $\mathcal{N}^{k,l}\subseteq\mathcal{M}^{k}$ is open, there exists $g_{2}\in\mathcal{M}^{\infty}\cap\{g\in\mathcal{M}^{k}:d_{k}(g,g_{1})<\frac{\delta}{2}\}\cap\mathcal{N}^{k,l}$. Therefore by triangle inequality $g_{2}\in\{g\in\mathcal{M}^{\infty}:d_{k}(g,g_{0})<\delta\}\cap\mathcal{N}^{\infty,l}\subseteq W\cap\mathcal{N}^{\infty,l}$ so $\mathcal{N}^{\infty,l}\subseteq\mathcal{M}^{\infty}$ is dense. It is also open as the $C^{\infty}$ topology is finer than the $C^{3}$ one. Additionally,
\begin{equation*}
    \bigcap_{l\in\mathbb{N}}\mathcal{N}^{\infty,l}=(\bigcap_{l\in\mathbb{N}}\mathcal{N}^{3,l})\cap\mathcal{M}^{\infty}=\mathcal{N}^{3}\cap(\bigcap_{k\geq 3}\mathcal{M}^{k})=\bigcap_{k\geq 3}\mathcal{N}^{k}=\mathcal{N}^{\infty}.
\end{equation*}
This means that $\mathcal{N}^{\infty}\subseteq\mathcal{M}^{\infty}$ is generic with respect to the $C^{\infty}$ topology, as desired.
\end{proof}

\begin{proof}[Proof of Theorem \ref{Cinfty thm}]
For each $k\in\mathbb{N}_{\geq 3},$ define $\mathcal{N}^{k}\subseteq\mathcal{M}^{k}$ as the set of $C^{k}$ bumpy metrics. By Theorem \ref{bumpythm} in the case $k<\infty$ (which was already proved), $\mathcal{N}^{k}\subseteq\mathcal{M}^{k}$ is generic with respect to the $C^{k}$ topology for every $k\in\mathbb{N}_{\geq 3}$ and it clearly holds that $\mathcal{N}^{k'}=\mathcal{N}^{k}\cap\mathcal{M}^{k'}$ whenever $k'\geq k$. Therefore we can apply the lemma and deduce that $\mathcal{N}^{\infty}=\cap_{k\in\mathbb{N}}\mathcal{N}^{k}$ is generic in the space $\mathcal{M}^{\infty}$ of $C^{\infty}$ metrics. As $\mathcal{N}^{\infty}$ is precisely the set of $C^{\infty}$ bumpy metrics, this completes the proof of the theorem.
\end{proof}



\bibliography{main}

\providecommand{\bysame}{\leavevmode\hbox to3em{\hrulefill}\thinspace}
\providecommand{\MR}{\relax\ifhmode\unskip\space\fi MR }
\providecommand{\MRhref}[2]{%
  \href{http://www.ams.org/mathscinet-getitem?mr=#1}{#2}
}
\providecommand{\href}[2]{#2}
\begin{thebibliography}{1}

\bibitem{Chodosh}
Otis Chodosh and Christos Mantoulidis, \emph{The p-widths of a surface},
  preprint (2021), \url{https://arxiv.org/abs/2107.11684}.

\bibitem{Hirsch}
Morris~W. Hirsch, \emph{Differential topology}, Springer-Verlag New York Inc.,
  1976.

\bibitem{Inci}
Hasan Inci, Kappeler Thomas, and Topalov Peter, \emph{On the regularity of the
  composition of diffeomorphisms}, Memoirs of the American Mathematical Society
  \textbf{1062} (2013).

\bibitem{LiSta}
Xinze Li and Bruno Staffa, \emph{On the equidistribution of closed geodesics
  and geodesic nets}, preprint (2022), \url{https://arxiv.org/abs/2205.13694}.

\bibitem{Liokumovich}
Yevgeny Liokumovich and Bruno Staffa, \emph{Generic density of geodesic nets},
  preprint (2021), \url{https://arxiv.org/abs/2107.12340}.

\bibitem{Tsoy}
Tsoy-Wo Ma, \emph{Banach-{Hilbert} spaces, vector measures and group
  representations}, World Scientific Pub Co Inc, 2002.

\bibitem{Smale}
Stephen Smale, \emph{An infinite dimensional version of {Sard}'s theorem},
  Amer. J. Math. \textbf{87} (1965), 861--868.

\bibitem{White}
Brian White, \emph{The space of minimal submanifolds for varying {Riemannian}
  metrics}, Indiana University Mathematics Department \textbf{40} (1991),
  no.~1, 161--200.

\bibitem{White2}
\bysame, \emph{On the bumpy metrics theorem for minimal submanifolds}, Amer. J.
  Math. \textbf{139} (2017), no.~4, 1149--1155.

\end{thebibliography}
\bibliographystyle{amsplain}
\end{document}